\documentclass[12pt]{article}
\usepackage{amsmath,amsfonts,amssymb,amsthm,accents,diagbox}
\usepackage{calrsfs,color,cases,graphicx,mathrsfs,indentfirst,enumitem}
\usepackage{ragged2e}
\usepackage{multirow}
\usepackage{hyperref}
\usepackage[margin=1in]{geometry}
\usepackage[all]{xy}

\newtheorem{definition}{Definition}
\newtheorem{theorem}{Theorem}
\newtheorem{lemma}{Lemma}

\newtheorem{corollary}{Corollary}
\newtheorem{remark}{Remark}

\newtheorem{counterexample}{Counterexample}
\makeatletter

\newcommand{\Rmnum}[1]{\expandafter\@slowromancap\romannumeral #1@}
\makeatother

\begin{document}

\title{\bf Asymptotic theory of $C$-pseudo-cones\footnote{This paper was supported by the NSFC (No. 12371060), the Shaanxi Fundamental Science Research Project for Mathematics and Physics (No. 22JSZ012) and the Excellent Graduate Training Program of SNNU (No. LHRCCX23142).}}
\author{Xudong Wang, Wenxue Xu, Jiazu Zhou, Baocheng Zhu}

\date{}
\maketitle

\begin{abstract}

In this paper, we study the non-degenerated $C$-pseudo-cones which can be uniquely decomposed into the sum of a $C$-asymptotic set and a $C$-starting point. Combining this with the novel work in \cite{Schneider-A_weighted_Minkowski_theorem}, we introduce the asymptotic weighted co-volume functional $T_\Theta(E)$ of the non-degenerated $C$-pseudo-cone $E$, which is also a generalized function with the singular point $o$ (the origin). Using our convolution formula for $T_\Theta(E)$, we establish a decay estimate for $T_\Theta(E)$ at infinity and present some interesting results. As applications of this asymptotic theory, we prove a weighted Brunn-Minkowski type inequality and study the solutions to the weighted Minkowski problem for pseudo-cones. Moreover, we pose an open problem regarding $T_\Theta(E)$, which we call the asymptotic Brunn-Minkowski inequality for $C$-pseudo-cones.

\vskip 2mm \noindent {\bf Mathematics Subject Classification 2020.\rm} 52A30, 52A39, 52A40.

\vskip 2mm \noindent \textbf{Keywords.} $C$-pseudo-cone, $C$-starting point, asymptotic weighted co-volume, Brunn-Minkowski type inequality.

\end{abstract}

\section{Introduction}

The polar operator on the space $\mathcal{K}^n$ of convex bodies and its functional counterpart, the Legendre transform on the class $\text{Cvx}(\mathbb{R}^n)$ of lower semi-continuous convex functions, play an important role in convex geometry, see \cite{Schneider-book} for more details. As an abstraction, an order reversing involution on certain partially ordered sets is called a duality. All dualities on $\mathcal{K}^n$ and $\text{Cvx}(\mathbb{R}^n)$ are completely characterized by the polar operator \cite{Boroczky-Schneider-A_characterization} and the Legendre transform \cite{Artstein-Avidan-Milman-The_concept_of_duality}, respectively. In \cite{Artstein-Avidan-Milman-A_new_duality_transform,Artstein-Avidan-Milman-Hidden_structures}, Artstein-Avidan and Milman introduced a new duality transform that is different from the Legendre transform on the sub-class $\text{Cvx}_0(\mathbb{R}^n)$ (the class of geometric convex functions) of $\text{Cvx}(\mathbb{R}^n)$. Naturally, a question arises: what is the geometric version corresponding to this new duality transform? In a special case, Rashkovskii \cite{Rashkovskii-Copolar_convexity} studied the copolarity of coconvex sets in the positive orthant of $\mathbb{R}^n$. Later, Artstein-Avidan, Sadovsky and Wyczesany \cite{Artstein-Avidan-A_zoo_of_dualities} pointed out that all order reversing quasi-involutions can be induced by a cost function and exhibited various attractive dualities. Moreover, they systematically studied this geometric version of the new duality transform (termed dual polarity) and showed that the acting object of the dual polarity is a cone-like unbounded closed convex set. They also established the Blaschke-Santal\'{o} type inequality for the cone-like sets under the condition of essential symmetry. The cone-like set and dual polarity were further examined in \cite{Xu-Li-Leng-Dualities}, where the authors referred to this cone-like set as the pseudo-cone.

On the other hand, inspired by the works of Khovanskii and Timorin \cite{Khovanskii-Timorin-On_the_theory_of_coconvex_bodies} and Milman and Rotem \cite{Milman-Rotem-Complemented_Brunn_Minkowski_inequalities}, Schneider \cite{Schneider-A_Brunn_Minkowski_theory} established the Brunn-Minkowski theory for $C$-close sets, where $C$ is a pointed closed convex cone in $\mathbb{R}^n$. Similar to the dual Minkowski problem posed by Huang, Lutwak, Yang and Zhang \cite{Huang-Lutwak-Yang-Zhang-Geometric_measures} for convex bodies, Li, Ye and the fourth author \cite{Li-Ye-Zhu-The_dual_Minkowski_problem} introduced the concept of $C$-compatible sets and studied their copolarity to address the dual Minkowski problem for unbounded closed convex sets. Adopting terms and conventions from \cite{Schneider-Pseudo_cones,Schneider-A_weighted_Minkowski_theorem}, these related results have shown that dual polarity is equivalent to copolarity and that $C$-compatible sets are the same as $C$-pseudo-cones. Quite recently, Semenov and Zhao \cite{Semenov-Zhao} studied some characterizations of the $C$-asymptotic sets to solve the Minkowski problem for the $C$-asymptotic sets given by the infinity measure. Inspired by these, our first result in this paper is a classification of $C$-pseudo-cones. That is, a non-degenerated $C$-pseudo-cone can be uniquely decomposed into the sum of a $C$-asymptotic set and a point in $C$. To state this result precisely, we gather some foundational concepts, which can be found in \cite{Artstein-Avidan-A_zoo_of_dualities,Li-Ye-Zhu-The_dual_Minkowski_problem,Schneider-A_Brunn_Minkowski_theory,Schneider-Pseudo_cones,
Schneider-A_weighted_Minkowski_theorem,Schneider-Weighted_cone_volume_measures_of_pseudo_cones,Xu-Li-Leng-Dualities}.

Denote by $\langle\cdot,\cdot\rangle$ and $|\cdot|$ the inner product and norm in $\mathbb{R}^n$, respectively. We write $B_2^n$ and $\mathbb{S}^{n-1}$ for the unit ball and the unit sphere in $\mathbb{R}^n$, respectively. The topological boundary and the topological interior of a subset $E\subset\mathbb{R}^n$ are denoted by $\partial E$ and $\text{int}\,E$, respectively. Denote by $H_u^-$ a negative half-space with the outer normal vector $u\in\mathbb{S}^{n-1}$. Let $C$ be a pointed closed convex cone in $\mathbb{R}^n$ with non-empty interior, i.e., $C$ is a $n$-dimensional closed convex cone that is line-free. The polar cone of $C$ is defined by $C^\circ=\{y\in\mathbb{R}^n\,|\,\langle x,y\rangle\leqslant0,\,\forall\,x\in C\}$, which is also line-free. Denote by $\Omega_C=\mathbb{S}^{n-1}\cap\text{int}\,C$ and $\Omega_{C^{\circ}}=\mathbb{S}^{n-1}\cap\text{int}\,C^{\circ}$.
Let $o\notin\mathbb{A}\subset C$ be an unbounded closed convex set and $A=C\setminus\mathbb{A}$. If $A$ has finite volume, then $\mathbb{A}$ is called a \emph{$C$-close set} and $A$ is called a \emph{$C$-coconvex set}. Specifically, a $C$-close set $\mathbb{A}$ is called a \emph{$C$-full set} if $A$ is bounded. Moreover, $\mathbb{A}$ is called a \emph{$C$-asymptotic set} if $\mathbb{A}$ is asymptotic to $\partial C$ at infinity, i.e.,
\begin{equation}\label{limin-formula-C-asymptotic-set}
\lim_{x\in\partial\mathbb{A},|x|\rightarrow+\infty}d(x,\partial C)=0,
\end{equation}
where $d(x,\partial C)$ is the distance from $x$ to $\partial C$. Let $E\subset\mathbb{R}^n$ be a non-empty closed convex set. The recession cone of $E$ is defined by ${\rm rec}\,E=\{x\in\mathbb{R}^n\,|\,E+x\subset E\}$. If $E$ is unbounded, then ${\rm rec}\,E$ is a closed convex cone. Let $o\notin E\subset\mathbb{R}^n$ be a non-empty closed convex set, $E$ is called a \emph{pseudo-cone} if there holds $\lambda x\in E$ for any $\lambda\geqslant1$ and $x\in\,E$. For an $n$-dimensional pointed closed convex cone $C$, if $E$ is a pseudo-cone and ${\rm rec}\,E=C$, then $E$ is called a \emph{$C$-pseudo-cone}.

\vskip.1cm

Now, we present a asymptotic characterization for the $C$-pseudo-cones. For the definition of the support function of $C$-pseudo-cones, please see \eqref{Definition-the-support-function}.
\begin{theorem}\label{Theorem-the-characteristic-of-C-pseudo-cone}
Let $C$ be a pointed closed convex cone in $\mathbb{R}^n$ with non-empty interior and $o\notin E\subset\mathbb{R}^n$ be a $C$-pseudo-cone, then the following two statements are equivalent:\\
$(i)$ There is a point $z\in C$ such that $h_E(v)=\langle z,v\rangle$ for all $v\in\partial\Omega_{C^\circ}$;\\
$(ii)$ $E$ can be uniquely decomposed into $E=z+\mathbb{A}$, where $\mathbb{A}$ is a $C$-asymptotic set and $z\in C$.
\end{theorem}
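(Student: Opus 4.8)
The plan is to establish the two implications separately, after recording three facts used throughout. First, since $C^{\circ}$ is an $n$-dimensional pointed cone, $\partial\Omega_{C^{\circ}}$ spans $\mathbb{R}^{n}$ and generates $C^{\circ}$ as a cone, so that $C=\bigcap_{v\in\partial\Omega_{C^{\circ}}}\{x:\langle x,v\rangle\leqslant 0\}$. Second, for every $x\in C$ one has $d(x,\partial C)=\min\{-\langle x,w\rangle:w\in\mathbb{S}^{n-1}\cap C^{\circ}\}$, and in particular $d(\cdot,\partial C)$ is positively homogeneous on $C$. Third, since ${\rm rec}\,(E-z)={\rm rec}\,E=C$ for any $z$, a translate $E-z$ that lies in $C$ is again a $C$-pseudo-cone, and a $C$-asymptotic set, having recession cone $C$, is $n$-dimensional.

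To prove $(i)\Rightarrow(ii)$, set $\mathbb{A}:=E-z$. From $h_{E}(v)=\langle z,v\rangle$ on $\partial\Omega_{C^{\circ}}$ we obtain $E\subset\bigcap_{v\in\partial\Omega_{C^{\circ}}}\{x:\langle x,v\rangle\leqslant\langle z,v\rangle\}=z+C$, hence $\mathbb{A}\subset C$ and $h_{\mathbb{A}}(v)=h_{E}(v)-\langle z,v\rangle=0$ for all $v\in\partial\Omega_{C^{\circ}}$; moreover $z\notin E$, for otherwise $z\in E\cap{\rm rec}\,E$ would force $z+C\subset E$, that is $E=z+C$, contradicting that $E$ is non-degenerate, and therefore $o\notin\mathbb{A}$. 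It remains to check that $\mathbb{A}$ is asymptotic to $\partial C$. Assume not: there exist $x_{k}\in\partial\mathbb{A}$ with $|x_{k}|\to\infty$ and $d(x_{k},\partial C)\geqslant\delta$ for some $\delta>0$. Take a unit outer normal $u_{k}$ of $\mathbb{A}$ at $x_{k}$; then $u_{k}\in C^{\circ}$, since $h_{\mathbb{A}}=+\infty$ outside $C^{\circ}$, and $h_{\mathbb{A}}(u_{k})=\langle x_{k},u_{k}\rangle\leqslant-\delta$ by the formula for $d(x_{k},\partial C)$. The crucial observation is that no subsequence of $\{u_{k}\}$ converges into ${\rm int}\,C^{\circ}$: on any compact $K\subset{\rm int}\,C^{\circ}$ there is $c_{0}>0$ with $-\langle x,w\rangle\geqslant c_{0}|x|$ for all $x\in C$ and $w\in K$, so $u_{k}\to v\in{\rm int}\,C^{\circ}$ would give $-h_{\mathbb{A}}(u_{k})=-\langle x_{k},u_{k}\rangle\geqslant c_{0}|x_{k}|\to\infty$, contradicting boundedness of $h_{\mathbb{A}}$ on $K$. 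Hence, along a subsequence, $u_{k}\to v\in\partial\Omega_{C^{\circ}}$, and lower semicontinuity of $h_{\mathbb{A}}$ gives $h_{\mathbb{A}}(v)\leqslant\liminf_{k}h_{\mathbb{A}}(u_{k})\leqslant-\delta$, contradicting $h_{\mathbb{A}}(v)=0$. Thus $\mathbb{A}$ is a $C$-asymptotic set and $E=z+\mathbb{A}$ is the required decomposition.

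To prove $(ii)\Rightarrow(i)$, write $E=z+\mathbb{A}$, so $h_{E}(v)=\langle z,v\rangle+h_{\mathbb{A}}(v)$ and it suffices to show $h_{\mathbb{A}}(v)=0$ for every $v\in\partial\Omega_{C^{\circ}}$. The inequality $h_{\mathbb{A}}(v)\leqslant 0$ follows from $\mathbb{A}\subset C$ and $v\in C^{\circ}$. Suppose $h_{\mathbb{A}}(v)=-c<0$. Since $v\in\partial C^{\circ}$, the face $C\cap v^{\perp}$ has dimension at least $1$; fix a unit vector $p$ in it and a point $a\in{\rm int}\,\mathbb{A}$ with $B(a,\rho)\subset\mathbb{A}$ for some $\rho>0$. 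Because $p\in C={\rm rec}\,\mathbb{A}$, we have $a+tp\in{\rm int}\,\mathbb{A}$ and $B(a+tp,\rho)\subset\mathbb{A}\subset C$ for all $t\geqslant 0$, so $d(a+tp,\partial C)\geqslant\rho$. Let $b_{t}=s_{t}(a+tp)$ be the point at which the ray through $a+tp$ from the origin first meets $\mathbb{A}$; this exists and lies on $\partial\mathbb{A}$ because $\mathbb{A}$ is a pseudo-cone not containing $o$. From $b_{t}\in\mathbb{A}\subset\{x:\langle x,v\rangle\leqslant-c\}$ and $\langle p,v\rangle=0$ we get $s_{t}\langle a,v\rangle\leqslant-c$, hence $s_{t}\geqslant\sigma:=c/(-\langle a,v\rangle)>0$, a bound independent of $t$. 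Consequently $|b_{t}|=s_{t}\,|a+tp|\to\infty$ while $d(b_{t},\partial C)=s_{t}\,d(a+tp,\partial C)\geqslant\sigma\rho>0$, contradicting that $\mathbb{A}$ is asymptotic to $\partial C$. Therefore $h_{\mathbb{A}}(v)=0$ on $\partial\Omega_{C^{\circ}}$. The same identity gives uniqueness: if $z_{1}+\mathbb{A}_{1}=z_{2}+\mathbb{A}_{2}$ with both $\mathbb{A}_{i}$ $C$-asymptotic, then $\langle z_{1}-z_{2},v\rangle=h_{\mathbb{A}_{2}}(v)-h_{\mathbb{A}_{1}}(v)=0$ for all $v\in\partial\Omega_{C^{\circ}}$, so $z_{1}=z_{2}$ because $\partial\Omega_{C^{\circ}}$ spans $\mathbb{R}^{n}$, and then $\mathbb{A}_{1}=\mathbb{A}_{2}$.

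I expect the main obstacle to lie in $(i)\Rightarrow(ii)$: the hypothesis controls $h_{\mathbb{A}}$ only on the relative boundary of $\Omega_{C^{\circ}}$, where $h_{\mathbb{A}}$ is merely lower semicontinuous, so one must first drive the outer normals along an escaping boundary sequence to accumulate on $\partial C^{\circ}$ — via the homogeneity estimate $-\langle x,w\rangle\geqslant c_{0}|x|$ on compact subsets of ${\rm int}\,C^{\circ}$ — before lower semicontinuity can be invoked. A smaller but genuine point is the clause $z\notin E$, where the non-degeneracy of $E$ is essential, since $E=z+C$ satisfies $(i)$ but admits no asymptotic decomposition.
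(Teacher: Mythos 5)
Your argument is sound in its core and takes a genuinely different, more economical route than the paper. The paper reaches the decomposition through the intermediate notion of the $C$-starting point: a monotonicity and uniform-convergence lemma for the distance functions $f_{u,x_0}$ (Lemma \ref{uniformly-lemma}), the equivalence of $(i)$ with $z$ being the $C$-starting point via a separating-hyperplane argument (Lemma \ref{Lemma-equivalence-support-function-and-C-starting-point}), and then the passage between $C$-starting points and $C$-asymptotic sets (Lemma \ref{Lemma-equivalence-C-starting-point-and-C-asymptotic-set}, assembled in Theorem \ref{Theorem-the-characteristic-of-C-pseudo-cone-extension}). You bypass all of this: for $(i)\Rightarrow(ii)$ you force the outer normals of an escaping boundary sequence of $\mathbb{A}=E-z$ to accumulate on $\partial\Omega_{C^\circ}$ (via the homogeneous estimate $-\langle x,w\rangle\geqslant c_0|x|$ on compact subsets of ${\rm int}\,C^\circ$) and then use lower semicontinuity of $h_{\mathbb{A}}$; for $(ii)\Rightarrow(i)$ you exhibit explicit witnesses $b_t\in\partial\mathbb{A}$ with $|b_t|\to\infty$ and $d(b_t,\partial C)\geqslant\sigma\rho>0$, which is in fact more detailed than the paper's one-line contradiction at the corresponding step, and your uniqueness argument via the spanning of $\partial\Omega_{C^\circ}$ is cleaner than the paper's comparison of asymptotic cones. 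Your appeals to ${\rm rec}\,\mathbb{A}=C$ and to $\mathbb{A}$ being $n$-dimensional are justified in context because $\mathbb{A}=E-z$ and ${\rm rec}\,E=C$ by hypothesis (the paper obtains the same facts from Lemma \ref{C-asymptotic-set}).

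The one genuine defect is the exclusion of $z\in E$ in $(i)\Rightarrow(ii)$. Non-degeneracy is not a hypothesis of the theorem; it is \emph{defined} afterwards as satisfying $(i)$ or $(ii)$, so ``contradicting that $E$ is non-degenerate'' is circular: $E=z+C$ with $z\in C\setminus\{o\}$ is a $C$-pseudo-cone satisfying $(i)$ (hence non-degenerate in the paper's sense), and for it your construction gives $\mathbb{A}=C$, which contains $o$ and is therefore not a $C$-asymptotic set under the paper's definition; nor is any translate $(z-z')+C$ with $z-z'\neq o$ asymptotic to $\partial C$, as your closing remark itself concedes. So as written your proof does not cover translated cones, and what you have actually located is a boundary case in which the stated equivalence requires the convention that $\mathbb{A}=C$ be admitted as a (degenerate) $C$-asymptotic set. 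The paper handles this case only implicitly: Lemma \ref{Lemma-equivalence-support-function-and-C-starting-point} opens with ``without loss of generality, let $E$ be not a closed convex cone,'' and Lemma \ref{Lemma-equivalence-C-starting-point-and-C-asymptotic-set}$(ii)$ never checks $o\notin -z+E$. The correct repair on your side is not to smuggle in non-degeneracy as a hypothesis, but to treat $z\in E$ (equivalently $E=z+C$) as a separate case and state explicitly under which reading of ``$C$-asymptotic set'' it satisfies $(ii)$; with that adjustment the rest of your proof stands.
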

We call a $C$-pseudo-cone satisfying $(i)$ or $(ii)$ in Theorem \ref{Theorem-the-characteristic-of-C-pseudo-cone} as a non-degenerated $C$-pseudo-cone. Moreover, for a non-degenerated $C$-pseudo-cone $E=z+\mathbb{A}$, we call $z$ as the $C$-starting point of $E$ and $z+C$ as the asymptotic cone of $E$. If $z=o$, then Theorem \ref{Theorem-the-characteristic-of-C-pseudo-cone} reduces to Theorem 2.6 in \cite{Semenov-Zhao}. But the methods in this paper are different from \cite{Semenov-Zhao}. There is a beautiful example of the degenerated $C$-pseudo-cone proposed by Schneider in high dimension ($n\geqslant3$), please see Figure \ref{Fig:degenerated-pseudo-cone} in Section \ref{section-asymptotic}. Furthermore, from a new perspective, the $C$-starting point of a non-degenerated $C$-pseudo-cone can offer insights into the conic linear bundle. As applications of this asymptotic theory of $C$-pseudo-cones, one can establish a strengthened version of the Brunn-Minkowski inequality for $C$-coconvex sets \cite{Schneider-A_Brunn_Minkowski_theory}, see Section \ref{Section-ABM-inequality-and-conjecture} for details.

Since the main behavior of a $C$-pseudo-cone concentrates around the origin, i.e., all $C$-pseudo-cones have almost same asymptotic behavior at infinity, the usual Lebesgue measure space is not suitable for $C$-pseudo-cones. Following Schneider's idea on the finiteness of the measure \cite{Schneider-A_weighted_Minkowski_theorem}, one can endow a weight on $C$ to ensure the compactness for $C$-pseudo-cones. Throughout this paper, we denote the weight function by $\Theta: C\setminus\{o\}\rightarrow (0,\infty)$, which is a $(-q)$-homogeneous continuous function with $q\in \mathbb{R}$. A good example of such a weight function is $\Theta(y)=|y|^{-q}$ for any $y\in C\setminus\{o\}$.

The weighted surface area measure \cite{Schneider-A_weighted_Minkowski_theorem} of $E$ is given by the push-forward measure of the weighted boundary Hausdorff measure, specifically,
\begin{equation*}
S^{\Theta}_{n-1}(E,\cdot)={\boldsymbol{\nu}_E}_{\sharp}\,\Theta(\mathcal{H}^{n-1}\llcorner\partial_eE),
\end{equation*}
where $\boldsymbol{\nu}_E$ is the Gauss image of $E$ and $\partial_eE=E\cap\text{cl}\,(C\setminus E)$ is the effective boundary of $E$ (see \cite{Li-Ye-Zhu-The_dual_Minkowski_problem} or section 2 for more details).
Schneider \cite{Schneider-A_weighted_Minkowski_theorem} showed that for $q>n-1$ and every $C$-pseudo-cone $E$, $S^{\Theta}_{n-1}(E,\cdot)$ is finite. Moreover, the weighted volume $V_{\Theta}(E)$ and the weighted co-volume $\overline{V}_{\Theta}(E)$ of $C$-pseudo-cone $E$ are defined as follows:
\begin{equation*}
V_{\Theta}(E)=\int_{E}\Theta(x)\,d\mathcal{H}^n(x),\quad \overline{V}_{\Theta}(E)=\int_{C\setminus E}\Theta(x)\,d\mathcal{H}^n(x).
\end{equation*}
Schneider \cite{Schneider-A_weighted_Minkowski_theorem} also proved that for any $n-1<q<n$ and every $C$-pseudo-cone $E$, $\overline{V}_{\Theta}(E)$ is finite.

From the asymptotic perspective of a $C$-pseudo-cone, we define the asymptotic weighted co-volume functional of non-degenerated $C$-pseudo-cones as follows: Let $z$ be the $C$-starting point of the $C$-pseudo-cone $E$, expressed as $E=z+\mathbb{A}$ for some $C$-asymptotic set $\mathbb{A}$. The asymptotic weighted co-volume of $E$ is defined by
\begin{equation*}
T_{\Theta}(E)=T_{\Theta}(\mathbb{A},z)=\int_{(z+C)\setminus E}\Theta(x)\,d\mathcal{H}^n(x).
\end{equation*}
We can view this asymptotic weighted co-volume $T_{\Theta}(E)$ as a functional of two variables. If we fix a $C$-asymptotic set $\mathbb{A}$, then $T_{\Theta}(E)=T_{\Theta}(\mathbb{A},\cdot)$ becomes a generalized function on $C$. As our second result, we summarize and enhance Schneider's finiteness theory from \cite{Schneider-A_weighted_Minkowski_theorem} into the following results.
\begin{theorem}\label{Theorem-finite-table}
Let $E$ be a $C$-pseudo-cone and $\mathbb{A}$ be a $C$-asymptotic set with $z\neq o$. Then the following table holds:
\begin{center}
\setlength{\tabcolsep}{1mm}
\renewcommand\arraystretch{2}
\begin{tabular}{|l|l|l|l|l|l|}
  \hline
  \diagbox{condition}{measure} & $S_{n-1}^{\Theta}(E,\cdot)$ & $V_{\Theta}(E)$ & $\overline{V}_{\Theta}(E)$ & $T_{\Theta}(\mathbb{A},o)$ & $T_{\Theta}(\mathbb{A},z)$\\  \hline
  $q>n$ & \textcolor{blue}{finite} & \textcolor{blue}{finite} & \textcolor{red}{$+\infty$} & \textcolor{red}{$+\infty$} & \textcolor{blue}{finite} \\  \hline
  $q=n$ & \textcolor{blue}{finite} & $+\infty$ & \textcolor{red}{$+\infty$} & $+\infty$ & \textcolor{blue}{finite} \\  \hline
  $n-1<q<n$ & \textcolor{blue}{finite} & $+\infty$ & \textcolor{blue}{finite} & finite & \textcolor{blue}{finite} \\ \hline
  $0\leqslant q\leqslant n-1$ & finite or $+\infty$ & $+\infty$ & finite or $+\infty$ & finite or $+\infty$ & finite or $+\infty$ \\
  \hline
\end{tabular}
\end{center}
\end{theorem}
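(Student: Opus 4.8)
The plan is to establish the table column by column, using Schneider's finiteness results for the first three columns in the range $q>n-1$ and supplying the complementary bounds and the two new columns by direct geometric estimates. For the weighted volume: any $C$-pseudo-cone $E$ contains a translate $x_0+C$, and since $\Theta(x)\asymp|x|^{-q}$ on $C$ (by homogeneity and positivity of $\Theta$ on the compact set $C\cap\mathbb S^{n-1}$), $V_\Theta(E)\geq\int_{x_0+C}\Theta\asymp\int_1^\infty r^{n-1-q}\,dr=+\infty$ precisely when $q\leq n$; for $q>n$, using $E=z+\mathbb A\subseteq C$ (as $z\in C$ and $\mathbb A\subseteq C$) together with $d(o,E)>0$ gives $V_\Theta(E)\leq\int_{\{x\in C:\,|x|\geq d(o,E)\}}\Theta<\infty$. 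For the weighted co-volume $\overline V_\Theta(E)$, and hence also for $T_\Theta(\mathbb A,o)=\overline V_\Theta(\mathbb A)$ by applying the same reasoning to the $C$-pseudo-cone $\mathbb A$: since $o\notin E$ and $E$ is closed, the corner $C\cap\varepsilon B_2^n\subseteq C\setminus E$ for small $\varepsilon>0$, and $\int_{C\cap\varepsilon B_2^n}\Theta\asymp\int_0^\varepsilon r^{n-1-q}\,dr=+\infty$ whenever $q\geq n$; the finiteness for $n-1<q<n$ is Schneider's theorem in \cite{Schneider-A_weighted_Minkowski_theorem}, as is finiteness of $S^\Theta_{n-1}(E,\cdot)$ for $q>n-1$. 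In the remaining range $0\leq q\leq n-1$, each of $S^\Theta_{n-1}$, $\overline V_\Theta$ and $T_\Theta(\mathbb A,o)$ may be either finite (e.g. for a $C$-full set, where the relevant domains are bounded and away from $o$) or $+\infty$ (e.g. for a pseudo-cone whose coconvex part has infinite volume and a boundary collar that shrinks too slowly), and one records such examples.

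The substantive entry is $T_\Theta(\mathbb A,z)$ with $z\neq o$, where $E=z+\mathbb A$. Since $z+C\subseteq C$ we have $(z+C)\setminus E\subseteq C\setminus E$, so $T_\Theta(\mathbb A,z)\leq\overline V_\Theta(E)$ already disposes of the range $n-1<q<n$; the new content is finiteness for $q\geq n$, exactly where $\overline V_\Theta(E)$ and $T_\Theta(\mathbb A,o)$ are infinite. Because $C$ is pointed and $z\neq o$, $o\notin z+C$, so the domain of integration is bounded away from the origin and $\Theta$ has no singularity on it. Translating by $-z$ and splitting radially, $T_\Theta(\mathbb A,z)=\int_{C\setminus\mathbb A}\Theta(z+y)\,d\mathcal H^n(y)$ decomposes into the bounded piece over $\{|y|\leq R_0\}$ — a finite contribution — and dyadic shells $\{2^k\leq|y|<2^{k+1}\}$; on the $k$-th shell with $2^k>2|z|$, homogeneity and continuity of $\Theta$ give $\Theta(z+y)\lesssim 2^{-kq}$, so everything reduces to bounding $\mathcal H^n\bigl((C\setminus\mathbb A)\cap\{2^k\leq|y|<2^{k+1}\}\bigr)$.

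The key point is that this shell lies in a thin collar of $\partial C$. Let $\rho$ be the radial function of the pseudo-cone $\mathbb A$ on $\mathrm{int}\,C\cap\mathbb S^{n-1}$, finite there because $\mathrm{rec}\,\mathbb A=C$ is full-dimensional, with $r\omega\in\mathbb A\iff r\geq\rho(\omega)$. If $y=r\omega\in C\setminus\mathbb A$ then $r<\rho(\omega)$, so by the homogeneity of $d(\cdot,\partial C)$ on the cone $C$,
\begin{equation*}
d(y,\partial C)=r\,d(\omega,\partial C)<\rho(\omega)\,d(\omega,\partial C)=d\bigl(\rho(\omega)\omega,\partial C\bigr)\leq\delta_k:=\sup\{d(x,\partial C):x\in\partial\mathbb A,\ |x|>2^k\},
\end{equation*}
and $\delta_k\to0$ by the asymptoticity \eqref{limin-formula-C-asymptotic-set} of $\mathbb A$, while $\delta_k\leq\delta_0<\infty$. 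A standard collar volume estimate for the convex cone $C$ then yields $\mathcal H^n\bigl((C\setminus\mathbb A)\cap\{2^k\leq|y|<2^{k+1}\}\bigr)\lesssim_C 2^{k(n-1)}\delta_k$ for $k$ large, whence
\begin{equation*}
T_\Theta(\mathbb A,z)\ \lesssim\ C_0+\sum_{k\geq k_0}2^{-kq}\cdot 2^{k(n-1)}\delta_k\ =\ C_0+\sum_{k\geq k_0}2^{k(n-1-q)}\delta_k .
\end{equation*}
This converges for every $q>n-1$ since $\delta_k$ is bounded, and for $q\geq n$ it converges irrespective of the decay rate of $\delta_k$; for $0\leq q\leq n-1$ the factor $2^{k(n-1-q)}$ no longer decays, so convergence depends on $\mathbb A$ and both finiteness and $+\infty$ can occur, which one shows by suitable choices of $\mathbb A$. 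I expect the main obstacle to be making the collar inclusion and the collar volume estimate fully rigorous — handling directions $\omega$ approaching $\partial(\mathbb S^{n-1}\cap C)$, where $\rho(\omega)\to\infty$, as well as the possible non-smoothness of $\partial C$ and $\partial\mathbb A$ — after which the dyadic summation is routine. Finally, the identity $T_\Theta(\mathbb A,z)=\int_{C\setminus\mathbb A}\Theta(z+y)\,d\mathcal H^n(y)$ realizes $T_\Theta(\mathbb A,\cdot)$ as the convolution of $\Theta$ with the indicator function of $-(C\setminus\mathbb A)$; this is the convolution formula alluded to in the introduction and the source of the decay estimate at infinity.
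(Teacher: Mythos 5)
Your treatment of the decisive entry---finiteness of $T_\Theta(\mathbb A,z)$ for $z\neq o$ in the range $q\geqslant n$ (indeed for all $q>n-1$)---is correct but follows a genuinely different route from the paper. You rewrite $T_\Theta(\mathbb A,z)=\int_{C\setminus\mathbb A}\Theta(z+y)\,d\mathcal H^n(y)$, slice into dyadic radial shells, and control the shell measure of $C\setminus\mathbb A$ by an inner collar of $\partial C$ of width $\delta_k=\sup\{d(x,\partial C):x\in\partial\mathbb A,\,|x|>2^k\}$, which is finite (and tends to $0$) by \eqref{limin-formula-C-asymptotic-set}; the series $\sum_k 2^{k(n-1-q)}\delta_k$ then converges for every $q>n-1$. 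The paper's Lemma \ref{Finiteness-asymptotic-weighted-co-volume} instead slices by the level sets $C(t)$ of $\langle\cdot,\mathfrak u\rangle$, bounds the cross-section of $(z+C)\setminus E$ by the difference $(t-\langle z,\mathfrak u\rangle)^{n-1}-(t-t_0)^{n-1}$ coming from the inclusions $x_0+C\subset E\subset z+C$, and concludes with a binomial expansion against $t^{-q}$; note that it uses only the existence of an interior point $x_0$, not the collar width $\delta_k$. Both mechanisms quantify the same phenomenon (the region between two cones that are parallel at infinity has cross-sections of size $O(t^{n-2})$), so your argument is a legitimate alternative, provided you carry out the collar volume bound you flag as the main technical point (e.g.\ rescale the shell to $1\leqslant|w|<2$ and cover $\{w\in C: d(w,\partial C)\leqslant s\}$ there by a Minkowski sausage of $\partial C$, with constant depending only on $C$). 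Your handling of the other columns---the corner-of-the-cone argument giving $\overline V_\Theta(E)=T_\Theta(\mathbb A,o)=+\infty$ for $q\geqslant n$, the polar-coordinate bounds for $V_\Theta$, the reduction of $n-1<q<n$ and of $S^\Theta_{n-1}$ for $q>n-1$ to Schneider, and the monotonicity $T_\Theta(\mathbb A,z)\leqslant\overline V_\Theta(z+\mathbb A)$---matches the paper's Lemmas \ref{Finiteness-weighted volume} and \ref{Finiteness-The-flaw-integral-I0E-infinite} in substance, with a simpler (polar rather than slab) computation for $V_\Theta$.

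The one place where the proposal does not yet deliver what the table asserts is the row $0\leqslant q\leqslant n-1$: the entries ``finite or $+\infty$'' claim that both possibilities actually occur, so they require explicit constructions, which you only gesture at (``one records such examples''). The paper exhibits them concretely in the plane with $C$ the positive quadrant and $\Theta=\langle\mathfrak u,\cdot\rangle^{-q}$: the hyperbola-type pseudo-cone $\{y\geqslant 1/x\}$ has infinite weighted surface area at the critical exponent $q=n-1$ (Schneider's counterexample only covers $q<n-1$, so this borderline case needs its own example); the translated cone $(1,1)+C$ has infinite weighted co-volume at infinity for $0<q\leqslant n-1$; and for the hyperbola set with starting point $z=(1,1)$ one gets $T_\Theta(\mathbb A,z)=+\infty$, which is the least obvious of the three since the domain avoids the origin. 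Finiteness in all these columns is witnessed by $C$-full sets. Until such examples are supplied (yours or the paper's), that row of the table is asserted rather than proved; the rest of your plan is sound.
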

In the table above, the case of $n-1<q<n$ has been solved by Schneider \cite{Schneider-A_weighted_Minkowski_theorem}. Here, we focus more on the case of $q>n$. The highlighted (in blue) entries indicate that there is useful and interesting information regarding the finiteness of the asymptotic weighted co-volume and the weighted co-volume of the $C$-pseudo-cone $E$. This suggests that as $q$ exceeds $n$, we can gain additional insights into the structure and properties of $C$-pseudo-cones. For example, we demonstrate that for every $q>n$ and any $C$-asymptotic set $\mathbb{A}$, there exists a basic yet important convolution formula (see Section 4 for details):
\begin{equation*}
T_{\Theta}(\mathbb{A},z)=\boldsymbol{\chi}_{-C}\ast\Theta\,(z)-V_{\Theta}(z+\mathbb{A}),\,z\in C,
\end{equation*}
where $\boldsymbol{\chi}_{-C}$ is the characteristic function of $-C$.
Fixing a $C$-asymptotic set $\mathbb{A}$, $T_{\Theta}(\mathbb{A},\cdot)$ is a generalized function on $C$. We establish a decay estimate for $T_{\Theta}(\mathbb{A},\cdot)$ at infinity, and then we obtain the following interesting formula:
Let $q>n$ and $o\neq z\in C$. If the weight function $\Theta$ is $C^1$-smooth, then
\begin{equation*}
\boldsymbol{\chi}_{-C}\ast\Theta\,(z)=\frac{1}{n-q}\int_{z+C}\frac{\partial\Theta}{\partial z}(x)\,d\mathcal{H}^n(x).
\end{equation*}
We believe that this imperceptible formula will be instrumental in establishing the Brunn-Minkowski inequality for $T_{\Theta}$.

Based on the fact that the Minkowski sum of two $C$-pseudo-cones is also a $C$-pseudo-cone (see Lemma 3.9 in \cite{Semenov-Zhao}), we can derive the following Brunn-Minkowski type inequality. For $0\leqslant q\leqslant n-1$, we call a $C$-pseudo-cone $E$ as a $(C,\Theta)$-close set if $\overline{V}_\Theta(E)<+\infty$.
\begin{theorem}\label{Theorem-Brunn-Minkowski-inequality-q-small-than-n-1}
Suppose that $0\leqslant q\leqslant n-1$. Let $E_1$ and $E_2$ be two $(C,\Theta)$-close sets. Then, we have
\begin{equation}\label{Formula-Brunn-Minkowski-inequality-q-small-than-n-1}
\overline{V}_\Theta(E_1+E_2)^\frac{1}{n-q}\leqslant\overline{V}_\Theta(E_1)^\frac{1}{n-q}+\overline{V}_\Theta(E_2)^\frac{1}{n-q}.
\end{equation}
Moreover, if $q\in[0,n-1)$, the equality holds in inequality \eqref{Formula-Brunn-Minkowski-inequality-q-small-than-n-1} if and only if $E_1$ and $E_2$ are dilates of each other.
\end{theorem}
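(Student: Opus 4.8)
The plan is to recast the weighted co-volume of a $C$-pseudo-cone as an $L^{n-q}$-norm of its radial function over $\Omega_C$ and then deduce \eqref{Formula-Brunn-Minkowski-inequality-q-small-than-n-1} from the classical Minkowski integral inequality; the hypothesis $0\le q\le n-1$ enters exactly as the inequality $n-q\ge 1$, which is what makes $\|\cdot\|_{L^{n-q}}$ a genuine norm. For a $C$-pseudo-cone $E$ and a direction $\theta\in\Omega_C$, set $\rho_E(\theta)=\min\{r>0:r\theta\in E\}\in(0,+\infty]$, with the value $+\infty$ if the ray $\mathbb{R}_{>0}\theta$ misses $E$. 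Since $E$ is a pseudo-cone, $r\theta\in E\iff r\ge\rho_E(\theta)$, and $\rho_E>0$ everywhere because $o\notin E$ and $E$ is closed. First I would note that for each $\theta\in\Omega_C$ the whole ray lies in $\text{int}\,C$, so $(C\setminus E)\cap\mathbb{R}_{>0}\theta=\{r\theta:0<r<\rho_E(\theta)\}$, and the part of $C$ not covered by these rays (a subset of $\partial C$) is $\mathcal{H}^n$-null; passing to polar coordinates and using $\Theta(r\theta)=r^{-q}\Theta(\theta)$ gives
\begin{equation*}
\overline{V}_\Theta(E)=\int_{\Omega_C}\Theta(\theta)\Bigl(\int_0^{\rho_E(\theta)}r^{\,n-1-q}\,dr\Bigr)\,d\mathcal{H}^{n-1}(\theta)=\frac{1}{n-q}\int_{\Omega_C}\Theta(\theta)\,\rho_E(\theta)^{\,n-q}\,d\mathcal{H}^{n-1}(\theta),
\end{equation*}
which is legitimate precisely because $n-1-q\ge 0$. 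Consequently, for $0\le q\le n-1$ a $C$-pseudo-cone $E$ is $(C,\Theta)$-close if and only if $\rho_E\in L^{n-q}(\mu)$, where $\mu$ is the $\sigma$-finite positive measure $\Theta\,\mathcal{H}^{n-1}\llcorner\Omega_C$ (which has full support on $\Omega_C$ since $\Theta>0$); in particular then $\rho_E<+\infty$ for $\mu$-a.e.\ $\theta$, and $\overline{V}_\Theta(E)>0$.

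The next step is the elementary pointwise bound $\rho_{E_1+E_2}\le\rho_{E_1}+\rho_{E_2}$ on $\Omega_C$: at any $\theta$ with $\rho_{E_i}(\theta)<\infty$ the points $\rho_{E_1}(\theta)\theta$ and $\rho_{E_2}(\theta)\theta$ lie in $E_1$ and $E_2$, hence their sum $\bigl(\rho_{E_1}(\theta)+\rho_{E_2}(\theta)\bigr)\theta$ lies in $E_1+E_2$, forcing $\rho_{E_1+E_2}(\theta)\le\rho_{E_1}(\theta)+\rho_{E_2}(\theta)$. Since $E_1+E_2$ is again a $C$-pseudo-cone (Lemma 3.9 in \cite{Semenov-Zhao}), the formula above applies to it as well; combining it with the monotonicity of $t\mapsto t^{n-q}$ and then with the Minkowski inequality in $L^{n-q}(\mu)$ (valid because $n-q\ge 1$) yields
\begin{equation*}
\bigl[(n-q)\,\overline{V}_\Theta(E_1+E_2)\bigr]^{\frac1{n-q}}\le\bigl\|\rho_{E_1}+\rho_{E_2}\bigr\|_{L^{n-q}(\mu)}\le\|\rho_{E_1}\|_{L^{n-q}(\mu)}+\|\rho_{E_2}\|_{L^{n-q}(\mu)},
\end{equation*}
and since the factor $(n-q)^{1/(n-q)}$ divides out, this is exactly \eqref{Formula-Brunn-Minkowski-inequality-q-small-than-n-1}; in particular $\overline{V}_\Theta(E_1+E_2)<+\infty$, so $(C,\Theta)$-closeness is preserved under Minkowski addition.

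For the equality discussion assume $q\in[0,n-1)$, so $p:=n-q>1$. Then equality in \eqref{Formula-Brunn-Minkowski-inequality-q-small-than-n-1} forces equality in the Minkowski inequality, whence $\rho_{E_1}=\lambda\,\rho_{E_2}$ holds $\mu$-a.e.\ for some constant $\lambda>0$ (both radial functions are positive and lie in $L^{p}(\mu)$). It remains to upgrade this to $E_1=\lambda E_2$, and here I would use two structural facts: every $C$-pseudo-cone satisfies $E=\overline{\text{int}\,E\cap\text{int}\,C}$ (if $p_0\in\text{int}\,E\cap\text{int}\,C$ and $x\in E$, the half-open segment $(x,p_0]$ lies in $\text{int}\,E\cap\text{int}\,C$, and such a $p_0$ exists because $E$ is $n$-dimensional), and the set of directions where $\rho_{E_1}=\lambda\rho_{E_2}$ is $\mu$-conull, hence dense in $\Omega_C$. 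Given $x=r\theta\in E_1$, one approximates $x$ by interior points $x_k\in\text{int}\,E_1\cap\text{int}\,C$ and perturbs each $x_k$ slightly — staying inside $E_1$ — so that its direction falls into the dense set of "good" directions; at such a direction $\rho_{E_1}=\lambda\rho_{E_2}$, so the perturbed point belongs to $\lambda E_2$, and closedness of $\lambda E_2$ gives $x\in\lambda E_2$. Thus $E_1\subseteq\lambda E_2$, and symmetry gives $E_1=\lambda E_2$. The converse is immediate: if $E_2=\lambda E_1$ then $E_1+E_2=(1+\lambda)E_1$, and the $(n-q)$-homogeneity of $\overline{V}_\Theta$ (a direct consequence of the substitution $x\mapsto\lambda x$ together with the $(-q)$-homogeneity of $\Theta$) turns \eqref{Formula-Brunn-Minkowski-inequality-q-small-than-n-1} into an equality. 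The restriction $q<n-1$ is natural, since for $q=n-1$ one has $p=1$ and Minkowski's inequality degenerates to an identity for nonnegative functions, so no proportionality is forced.

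The analytically substantive part is the last one: showing that a $C$-pseudo-cone is determined by its radial function outside a $\mu$-null set of directions. The main obstacle is to make the perturbation/approximation argument fully rigorous — in particular, to verify that one can always nudge an interior point into a prescribed dense set of directions while remaining inside the pseudo-cone, and to handle the behaviour near $\partial\Omega_C$. Everything else (the polar-coordinate identity, the pointwise estimate for $\rho_{E_1+E_2}$, and the invocation of Minkowski's inequality) is routine once one has checked that $\mu$ is $\sigma$-finite with full support and that the polar decomposition of $C\setminus E$ holds up to an $\mathcal{H}^n$-null set, both of which follow from the continuity and strict positivity of $\Theta$ on $C\setminus\{o\}$.
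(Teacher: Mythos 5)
Your proposal is correct and follows essentially the same route as the paper: the pointwise bound $\rho_{E_1+E_2}\leqslant\rho_{E_1}+\rho_{E_2}$ is exactly the paper's radial-sum lemma ($E_1\widetilde{+}E_2\subset E_1+E_2$), after which both arguments combine the polar-coordinate formula for $\overline{V}_\Theta$ with Minkowski's inequality in $L^{n-q}$ and its equality condition for $n-q>1$. The only cosmetic difference is your a.e.-to-everywhere perturbation argument in the equality case, which can be shortened by noting that $\rho_{E_1},\rho_{E_2}$ are continuous (indeed locally Lipschitz, by the paper's Lemma \ref{locally-Lipschitz-radial-map-of-C-pseudo-cone}) on $\Omega_C$ and $\mu$ has full support, so proportionality $\mu$-a.e.\ already holds everywhere.
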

From the asymptotic perspective of $C$-pseudo-cones, we conjecture that the following Brunn-Minkowski type inequality holds.
\vskip 1mm
\noindent {\bf Open problem (Asymptotic Brunn-Minkowski inequality for $C$-pseudo-cones)}:
Let $E_1,E_2$ be two non-degenerated $C$-pseudo-cones, and let $z_1,z_2\neq o$ be their respective $C$-starting points. For $\lambda\in(0,1)$ and $0\leqslant q\neq n$, there holds
\begin{equation*}
T_\Theta(\lambda E_1+(1-\lambda)E_2)^\frac{1}{n-q}\leqslant\lambda T_\Theta(E_1)^\frac{1}{n-q}+(1-\lambda)T_\Theta(E_2)^\frac{1}{n-q}?
\end{equation*}

\begin{remark}[Weighted theory v.s. dual theory] Comparing the weighted Brunn-Minkowski theory for $C$-pseudo-cones in \cite{Schneider-A_weighted_Minkowski_theorem} with the dual Brunn-Minkowski theory for $C$-compatible sets in \cite{Li-Ye-Zhu-The_dual_Minkowski_problem}, we would like to note that the dual theory is a logarithmic version of the isotropic weighted theory. The weight functions $\Theta$, which can be isotropic or anisotropic, have two representative examples in \cite{Schneider-A_weighted_Minkowski_theorem}: $|x|^{-q}$ and $\langle\mathfrak{u},x\rangle^{-q}$ for $x\in C\setminus \{o\}$ and some fixed unit vector $\mathfrak{u} \in \Omega_C\cap(-\Omega_{C^\circ})$. If we consider the weight function $\Theta=|x|^{q-n}$, then the $q$-th dual curvature measure of the $C$-pseudo-cone $E$ in \cite{Li-Ye-Zhu-The_dual_Minkowski_problem} is given by
\begin{equation*}
d\widetilde{C}_q(E,\cdot)=-\frac{1}{n}h_E(\cdot)\,dS_{n-1}^{\Theta}(E,\cdot).
\end{equation*}
The $q$-dual volume of $C$-pseudo-cone $E$ is defined as
\begin{equation*}
\widetilde{V}_q(E)=\frac{1}{n}\int_{\Omega_C^e}\rho_E^q(u)\,du.
\end{equation*}
Here, $h_E$ and $\rho_E$ denote the support function and radial function of $E$, respectively. By applying Theorem \ref{Theorem-finite-table}, we will show that $\widetilde{V}_q(E)$ is finite for every $C$-pseudo-cone $E$ and for any $q\in(-\infty,0)\cup(0,1)$ in Section \ref{section-finiteness}.
\end{remark}

Related results of the noncompact version of the classical Minkowski problem have long been studied by Pogorelov \cite{Pogorelov-An_analogue_of_the_Minkowski_problem}, Bakelman \cite{Bakelman-Variational_problems}, and Urbas \cite{Urbas-The_equation_of_prescribed_Gauss_curvature_without_boundary_conditions}. Subsequently, Chou and Wang \cite{Chou-Wang-Minkowski_problems_for_complete_noncompact} studied the smooth solutions to noncompact Minkowski problems in detail. Recently, Choi et al. \cite{Choi-Choi-Daskalopoulos-Convergence,Choi-Choi-Daskalopoulos-Uniqueness_of_ancient_solutions,Choi-Daskalopoulos-Kim-Lee-The_evolution} studied significant problems related to the evolution of hypersurfaces asymptotic to a cylinder. In cases where these hypersurfaces are asymptotic to a cone, Schneider \cite{Schneider-A_Brunn_Minkowski_theory,Schneider-A_weighted_Minkowski_theorem} was the first to investigate the noncompact Minkowski problem.
Moreover, concerning the weighted Minkowski problem for $C$-pseudo-cones, Schneider \cite{Schneider-A_weighted_Minkowski_theorem} provided an elegant result that characterizes the weighted surface area measure for $n-1<q<n$. As applications of our asymptotic theory, we present solutions to the weighted Minkowski problem for $0\leqslant q<n-1$ as follows.
\begin{theorem}\label{Weighted-Minkowski-problem-for-q-small-than-n-1}
For $q\in[0,n-1)$, given a nonzero finite Borel measure $\mu$ on $\Omega_{C^\circ}$, there exists a $(C,\Theta)$-close set $E$ such that $S^\Theta_{n-1}(E,\cdot)=\mu$. Moreover, if $K$ and $L$ are $C$-determined sets by $\omega\subset \Omega_{C^\circ}$ such that $S^\Theta_{n-1}(K,\cdot)=S^\Theta_{n-1}(L,\cdot)=\mu$, then $K=L$.
\end{theorem}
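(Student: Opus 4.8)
The plan is to prove both statements by the variational method, with the weighted Brunn--Minkowski inequality of Theorem~\ref{Theorem-Brunn-Minkowski-inequality-q-small-than-n-1} as the engine. Besides the homogeneities $\overline{V}_\Theta(\lambda E)=\lambda^{n-q}\overline{V}_\Theta(E)$ and $S^\Theta_{n-1}(\lambda E,\cdot)=\lambda^{n-1-q}S^\Theta_{n-1}(E,\cdot)$ (immediate from the $(-q)$-homogeneity of $\Theta$ and $\lambda C=C$), I would first record the first variation formula
\begin{equation*}
\left.\frac{d}{dt}\overline{V}_\Theta(E+tL)\right|_{t=0^+}=-\int_{\Omega_{C^\circ}}h_L(v)\,dS^\Theta_{n-1}(E,v)=:W(E,L)\geqslant0
\end{equation*}
for $C$-pseudo-cones $E,L$ (here $E+tL$ is the Minkowski sum, a $C$-pseudo-cone by Lemma~3.9 of \cite{Semenov-Zhao}, and $W(E,L)\geqslant0$ because $h_L\leqslant0$ on $\Omega_{C^\circ}$), proved by the standard shell estimate for $\overline{V}_\Theta(E)-\overline{V}_\Theta(E+tL)=\int_{E\setminus(E+tL)}\Theta$; differentiating $t\mapsto\overline{V}_\Theta((1+t)E)$ and using $(1+t)E=E+tE$ then gives the Euler identity $W(E,E)=(n-q)\overline{V}_\Theta(E)$.

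For existence, fix $\omega\subseteq\Omega_{C^\circ}$ containing $\operatorname{supp}\,\mu$ and minimize $\overline{V}_\Theta(E)$ over all $(C,\Theta)$-close sets $E$ that are $C$-determined by $\omega$ and satisfy $\int_{\Omega_{C^\circ}}(-h_E)\,d\mu=1$; this class is nonempty after rescaling any $(C,\Theta)$-close set. I would extract a minimizer $E_0$ from a minimizing sequence $\{E_k\}$ by a selection argument: the constraint $\int(-h_{E_k})\,d\mu=1$ with $\mu\neq0$ prevents the support functions $h_{E_k}$ from tending to $0$, while $\overline{V}_\Theta(E_k)$ being bounded, together with $\int_{C\cap B_R}\Theta\to\infty$ as $R\to\infty$ (valid since $q<n$), prevents the $E_k$ from escaping to infinity; hence $\{E_k\}$ subconverges locally in the Hausdorff sense to a $C$-pseudo-cone $E_0$ with $\overline{V}_\Theta(E_0)\leqslant\liminf_k\overline{V}_\Theta(E_k)$ (Fatou) and $\int(-h_{E_0})\,d\mu=1$ (dominated convergence). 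Perturbing $h_{E_0}$ to $h_{E_0}+tf$ within the sets $C$-determined by $\omega$, renormalizing to preserve the constraint, and differentiating at $t=0$ via the first variation formula yields the Euler--Lagrange equation $S^\Theta_{n-1}(E_0,\cdot)=(n-q)\overline{V}_\Theta(E_0)\,\mu$. Since $q<n-1$, i.e.\ $n-1-q>0$, setting $E:=\lambda E_0$ with $\lambda=\bigl((n-q)\overline{V}_\Theta(E_0)\bigr)^{-1/(n-1-q)}$ and using the scaling of $S^\Theta_{n-1}$ gives $S^\Theta_{n-1}(E,\cdot)=\mu$, and $E$ is $(C,\Theta)$-close because $q<n$.

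For uniqueness, let $K,L$ be $C$-determined by $\omega$ with $S^\Theta_{n-1}(K,\cdot)=S^\Theta_{n-1}(L,\cdot)=\mu$. I would first prove the weighted Minkowski inequality
\begin{equation*}
W(E,L)\leqslant(n-q)\,\overline{V}_\Theta(E)^{\frac{n-q-1}{n-q}}\,\overline{V}_\Theta(L)^{\frac{1}{n-q}}.
\end{equation*}
Indeed $g(t):=\overline{V}_\Theta(E+tL)^{1/(n-q)}$ is convex (apply Theorem~\ref{Theorem-Brunn-Minkowski-inequality-q-small-than-n-1} to $E+tL=\alpha(E+t_1L)+(1-\alpha)(E+t_2L)$ for $t=\alpha t_1+(1-\alpha)t_2$), and by Theorem~\ref{Theorem-Brunn-Minkowski-inequality-q-small-than-n-1} applied to $E$ and $tL$ it lies below the affine function $t\mapsto\overline{V}_\Theta(E)^{1/(n-q)}+t\,\overline{V}_\Theta(L)^{1/(n-q)}$, which agrees with $g$ at $t=0$; hence $g'(0^+)\leqslant\overline{V}_\Theta(L)^{1/(n-q)}$, which is the displayed inequality after substituting the first variation formula. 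Using it with $E=K$ and the identities $W(K,L)=-\int h_L\,d\mu=W(L,L)=(n-q)\overline{V}_\Theta(L)$ yields $\overline{V}_\Theta(L)\leqslant\overline{V}_\Theta(K)$ (here $\frac{n-q-1}{n-q}>0$, i.e.\ $q<n-1$, is used), and by symmetry $\overline{V}_\Theta(K)=\overline{V}_\Theta(L)$. The Minkowski inequality then holds with equality, which by convexity forces $g$ to coincide with its affine majorant on $[0,\infty)$; at $t=1$ this is the equality case of Theorem~\ref{Theorem-Brunn-Minkowski-inequality-q-small-than-n-1}, so $K$ and $L$ are dilates, $L=\lambda K$. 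Then $\mu=\lambda^{n-1-q}\mu$, and since $\mu\neq0$ and $n-1-q>0$ we get $\lambda=1$, i.e.\ $K=L$.

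The step I expect to be the main obstacle is the compactness in the existence proof. Since $\mu$ is only assumed finite on the \emph{open} set $\Omega_{C^\circ}$, its mass may accumulate toward $\partial\Omega_{C^\circ}$, where the support functions $h_{E_k}$ need not remain uniformly bounded and where the limit $E_0$ may fail to carry all of $\mu$ on its effective boundary; making the selection argument and the identification $S^\Theta_{n-1}(E_0,\cdot)=(n-q)\overline{V}_\Theta(E_0)\,\mu$ rigorous requires genuine care with the ``$C$-determined by $\omega$'' formalism --- choosing $\omega$ judiciously, or first solving the problem for $\mu$ restricted to compact subsets of $\Omega_{C^\circ}$ and then passing to the limit with uniform a priori estimates. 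This is also exactly where the hypothesis $0\leqslant q<n-1$ is genuinely used: it is the range in which $\overline{V}_\Theta$ is the correct finite $(n-q)$-homogeneous functional on $(C,\Theta)$-close sets and in which the equality characterization in Theorem~\ref{Theorem-Brunn-Minkowski-inequality-q-small-than-n-1} is available. Verifying the first variation formula and the admissibility of the renormalized perturbations within the class of $(C,\Theta)$-close pseudo-cones is a further, but routine, technical point.
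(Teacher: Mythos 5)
Your overall strategy coincides with the paper's: existence by a variational argument whose Euler--Lagrange equation produces the weighted surface area measure up to a multiplicative constant that is then absorbed by an $(n-1-q)$-homogeneous rescaling (this is where $q<n-1$ enters), and uniqueness by combining the first variation formula with the weighted Brunn--Minkowski inequality of Theorem \ref{Theorem-Brunn-Minkowski-inequality-q-small-than-n-1} and its equality case, finishing with $\mu=\lambda^{n-1-q}\mu$ to force $\lambda=1$. Your uniqueness argument is, up to notation, the paper's proof, and your constrained minimization of $\overline{V}_\Theta$ under $\int\overline{h}_E\,d\mu=1$ is equivalent, by scale invariance, to the paper's maximization of $\mathcal{F}(f)=\overline{V}_\Theta([f])^{-1/(n-q)}\int_\omega f\,d\mu$; the compactness input you describe (support functions bounded above, sets not drifting to infinity) is exactly the content of Lemmas \ref{Upper bound estimate} and \ref{Lower bound estimate} together with Schneider's selection theorem, and your first variation formula plays the role of \eqref{Variational formula}.

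The one genuine gap is the existence statement for a general nonzero finite Borel measure on $\Omega_{C^\circ}$. Your construction is carried out inside the class of sets $C$-determined by a fixed $\omega$ containing $\operatorname{supp}\mu$, but that framework (and in particular the lower-bound Lemma \ref{Lower bound estimate}) requires $\omega$ compact, while $\operatorname{supp}\mu$ may well reach $\partial\Omega_{C^\circ}$. You correctly identify this as the main obstacle and name the right remedy, but you do not execute it; this is precisely the paper's Step 2: solve the problem for $\mu_j=\mu(\,\cdot\,\cap\omega_j)$ along an exhaustion $\omega_j\nearrow\Omega_{C^\circ}$, rescale the solutions to $L_j$ with $S^\Theta_{n-1}(L_j,\cdot)=\mu_j$, obtain a $j$-independent upper bound $\overline{h}_{L_j}\leqslant\bigl(\tfrac{\Lambda}{n-q}\mu(\Omega_{C^\circ})\bigr)^{1/(n-1-q)}\Lambda$ from Lemma \ref{Upper bound estimate}, a lower bound on the distance $b(L_j)$ from the origin using that $\mu(\omega(\tau))>0$ for some $\tau>0$ together with Lemma 9 of \cite{Schneider-Pseudo_cones}, then pass to a limit $K$ by the Blaschke selection theorem, verify weak convergence $S^\Theta_{n-1}(L_j,\cdot)\to S^\Theta_{n-1}(K,\cdot)$ to get $S^\Theta_{n-1}(K,\cdot)=\mu$, and finally check $\overline{V}_\Theta(K)<+\infty$ via \eqref{Integral-formula-weighted-surface-area-measure-weighted-co-volume} so that $K$ is indeed $(C,\Theta)$-close. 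Without these uniform a priori estimates and the weak-convergence step, the limiting procedure you sketch does not yet prove existence in the generality claimed by the theorem.
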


\vskip 1mm

The organization of this paper is as follows. In Section 2, we discuss the push-forward measure and co-area formula related to $C$-pseudo-cones. Section 3 presents the asymptotic theory of $C$-pseudo-cones. Section 4 summarizes results on the finiteness of the weighted co-volume and the asymptotic weighted co-volume. In Section 5, we provide integral formulas for the (asymptotic) weighted co-volume and the weighted surface area measure. Theorem \ref{Theorem-Brunn-Minkowski-inequality-q-small-than-n-1} and the asymptotic Brunn-Minkowski inequality for $C$-pseudo-cones are discussed in Section 6. Finally, we study the weighted Minkowski problem with subcritical exponent.

\vskip 5mm \noindent  {\bf Acknowledgement.}  The authors would like to thank Professor Rolf Schneider for providing a crucial example of the degenerated $C$-pseudo-cone.

\section{Background: Push-forward measure and the Co-area formula}

In this section, we introduce some tools related to $C$-pseudo-cones. Let $E\subset\mathbb{R}^n$ be a non-empty closed convex set. The recession cone of $E$ is defined as
$
{\rm rec}\,E=\{x\in\mathbb{R}^n\,|\,E+x\subset E\}.
$
If $E$ is unbounded, then ${\rm rec}\,E$ is a closed convex cone.
\begin{definition}[see \cite{Artstein-Avidan-A_zoo_of_dualities,Schneider-Pseudo_cones,Schneider-A_weighted_Minkowski_theorem,Xu-Li-Leng-Dualities}]
Let $o\notin E\subset\mathbb{R}^n$ be a non-empty closed convex set. The set $E$ is called a pseudo-cone if $\lambda x\in E$ for any $\lambda\geqslant1$ and $x\in\,E$. Crucially, we have:
\begin{equation*}
E\ \text{is a pseudo-cone if and only if}\ E\subset{\rm rec}\,E.
\end{equation*}
Moreover, if $E$ is $n$-dimensional and line-free, then ${\rm rec}\,E$ is an $n$-dimensional pointed closed convex cone and $E$ is called a $({\rm rec}\,E)$-pseudo-cone; Conversely, given an $n$-dimensional pointed closed convex cone $C$, if $E$ is a pseudo-cone and ${\rm rec}\,E=C$, then $E$ is called a $C$-pseudo-cone.
\end{definition}

Let $E$ be a $C$-pseudo-cone. The support function of $E$ is defined by
\begin{equation}\label{Definition-the-support-function}
h_E(v)=\max\{\langle x,v\rangle\,|\,x\in E\},\,v\in\Omega_{C^{\circ}}.
\end{equation}
Note that $-\infty<h_E(v)<0$ for any $v\in\Omega_{C^{\circ}}$, we denote $\overline{h}_E=-h_E$. The radial function of $E$ is defined by
\begin{equation*}
\rho_E(u)=\min\{r>0\,|\,ru\in E\},\,u\in\Omega_C.
\end{equation*}
Fixed $\mathfrak{u}\in\Omega_C\cap(-\Omega_{C^\circ})$, then $\langle\mathfrak{u},x\rangle>0$ for any $x\in C\setminus\{o\}$ (see e.g., \cite{Schneider-Pseudo_cones}). Define $C(t)=\{x\in C\,|\,\langle x,\mathfrak{u}\rangle=t\}$ and $C^-(t)=\{x\in C\,|\,\langle x,\mathfrak{u}\rangle\leqslant t\}$.

Li, Ye and the fourth author \cite{Li-Ye-Zhu-The_dual_Minkowski_problem} introduced the following notion of $C$-compatible set.
\begin{definition}[see \cite{Li-Ye-Zhu-The_dual_Minkowski_problem}]\label{Def-C-closed-convex-hull}
Let $o\notin E\subset C$ be a non-empty set. The closed convex hull of $E$ with respect to $C$ is defined by
\begin{equation*}
\begin{aligned}
{\rm conv}(E,C)&=\bigcap\{\widetilde{E}|\widetilde{E}\ \text{is a C-full set such that}\ E\subset\widetilde{E}\}\\
&=\bigcap\{\widetilde{E}|\widetilde{E}\ \text{is a C-close set such that}\ E\subset\widetilde{E}\}\\
&=\bigcap\{\widetilde{E}|\widetilde{E}\ \text{contains x+C for all}\ x\in E\}\\
&=C\cap\bigcap_{v\in\Omega_{C^\circ}}\{H_v^-|\,E\subset H_v^-\}.
\end{aligned}
\end{equation*}
If ${\rm conv}(E,C)=E$, then $E$ is called a $C$-compatible set.
\end{definition}

\begin{lemma}[see \cite{Li-Ye-Zhu-The_dual_Minkowski_problem}]
Let $o\notin E\subset C$, then $E$ is a $C$-pseudo-cone if and only if it is a $C$-compatible set.
\end{lemma}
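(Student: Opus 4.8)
The plan is to reduce both implications to a single statement about the recession cone of $E$ and then to read off the conclusion from the four equivalent descriptions of ${\rm conv}(E,C)$ recorded in Definition~\ref{Def-C-closed-convex-hull}. The two elementary facts that make this work are: for a closed convex set $E$ one has $C\subseteq{\rm rec}\,E$ if and only if $x+C\subseteq E$ for every $x\in E$ (both being equivalent to $E+C\subseteq E$); and if $E\subseteq C$, then ${\rm rec}\,E\subseteq C$, because for $v\in{\rm rec}\,E$ and any $x\in E$ we have $t^{-1}x+v=t^{-1}(x+tv)\in t^{-1}E\subseteq t^{-1}C=C$ for every $t>0$, and letting $t\to\infty$ gives $v\in C$ since $C$ is closed. (Here $x+tv\in E$ uses that ${\rm rec}\,E$ is a cone.)

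First I would show that a $C$-compatible set is a $C$-pseudo-cone. Assume ${\rm conv}(E,C)=E$. By the third description in Definition~\ref{Def-C-closed-convex-hull}, every member of the defining family contains $x+C$ for each $x\in E$, hence so does their intersection; thus $x+C\subseteq E$ for all $x\in E$, that is, $C\subseteq{\rm rec}\,E$. Combining this with $E\subseteq C$ gives $E\subseteq C\subseteq{\rm rec}\,E$, so $E$ is a pseudo-cone; and ${\rm rec}\,E\subseteq C$ then forces ${\rm rec}\,E=C$. Since $E={\rm conv}(E,C)$ is closed and convex (fourth description, an intersection of $C$ with half-spaces) and $o\notin E$ by hypothesis, $E$ is a $C$-pseudo-cone.

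Conversely, if $E$ is a $C$-pseudo-cone then ${\rm rec}\,E=C$, so $x+C\subseteq E$ for every $x\in E$; in other words $E$ itself belongs to the family used in the third description of ${\rm conv}(E,C)$, whence ${\rm conv}(E,C)\subseteq E$. The reverse inclusion $E\subseteq{\rm conv}(E,C)$ is automatic from the fourth description, since $E\subseteq C$ and $E\subseteq H_v^-$ for every half-space appearing there. Hence ${\rm conv}(E,C)=E$, i.e.\ $E$ is $C$-compatible. The argument is essentially bookkeeping with the definitions; the only point needing a moment's care is the pair of recession-cone facts isolated at the start, so I do not anticipate a genuine obstacle.
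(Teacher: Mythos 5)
The paper itself gives no proof of this lemma (it is quoted from \cite{Li-Ye-Zhu-The_dual_Minkowski_problem}), so the only question is whether your argument is self-contained and correct. The direction ``$C$-compatible $\Rightarrow$ $C$-pseudo-cone'' is fine: closedness and convexity from the fourth description, $x+C\subset E$ from the third, and your two recession-cone facts give ${\rm rec}\,E=C$. That half is genuinely just bookkeeping.

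The converse direction, however, has a real gap. Your only step there is the claim that $E$ itself belongs to the family appearing in the third line of Definition~\ref{Def-C-closed-convex-hull}, so that ${\rm conv}(E,C)\subset E$. This forces you to read that line as an intersection over \emph{all} sets $\widetilde{E}$ containing $x+C$ for every $x\in E$. Under that literal reading the third expression equals $\bigcup_{x\in E}(x+C)$, which for a non-convex $E$ (say two points of $C$) is not convex and therefore cannot coincide with the first, second and fourth expressions; so the definition cannot mean that, and the intended family is the one from the first two lines, i.e.\ $C$-full (or $C$-close) sets $\widetilde E$ with $x+C\subset\widetilde E$ for all $x\in E$. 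A general $C$-pseudo-cone is not a member of that family: its co-volume can be infinite (e.g.\ $E=\{(x,y)\,|\,x\geqslant0,\,y\geqslant1\}$ inside the first-quadrant cone), so ``$E$ belongs to the family'' fails and the inclusion ${\rm conv}(E,C)\subset E$ is not established. What this direction actually requires is the nontrivial separation statement that every point of $C\setminus E$ is cut off by a half-space whose outer normal lies in the \emph{open} set $\Omega_{C^\circ}$ (equivalently $E=C\cap\bigcap_{v\in\Omega_{C^\circ}}H^-(v,h_E(v))$); a separating normal produced by the Hahn-Banach argument may lie on $\partial\Omega_{C^\circ}$ and must be perturbed into the interior, exactly the kind of work done in the paper's Lemma~\ref{C-asymptotic-set} and in part $(i)$ of Lemma~\ref{Lemma-equivalence-support-function-and-C-starting-point}. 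If instead you regard the equality of all four expressions in Definition~\ref{Def-C-closed-convex-hull} as a citable black box, then that equality already encodes precisely this separation content, and your ``proof'' of the converse reduces to restating what is being proved; either way the essential step is missing.
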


Let $E$ be a $C$-pseudo-cone. Some geometric mappings and sets associated with $E$, such as the Gauss image map and the effective boundary of $E$, as discussed in \cite{Li-Ye-Zhu-The_dual_Minkowski_problem}, are summarized as follows:
\begin{equation*}
\qquad\quad\text{transport mappings}\left\{
\begin{aligned}
\boldsymbol{\nu}_E:\mathcal{B}(\partial_eE)&\rightarrow\mathcal{B}(\Omega_{C^\circ}^e),\,\beta\mapsto\boldsymbol{\nu}_E(\beta),\\
r_E:\Omega_C^e&\rightarrow\partial_eE,\,u\mapsto\rho_E(u)u,\\
\boldsymbol{\alpha}_E:\mathcal{B}(\Omega_C^e)&\rightarrow\mathcal{B}(\Omega_{C^\circ}^e),\,\omega\mapsto\boldsymbol{\nu}_E(r_E(\omega)),\\
\end{aligned}
\right.
\end{equation*}
\begin{equation*}
\text{transport inverse mappings}\left\{
\begin{aligned}
\boldsymbol{\nu}^*_E:\mathcal{B}(\Omega_{C^\circ}^e)&\rightarrow\mathcal{B}(\partial_eE),\,\eta\mapsto\boldsymbol{\nu}^*_E(\eta),\\
r^{-1}_E:\partial_eE&\rightarrow\Omega_C^e,\,x\mapsto\frac{x}{|x|},\\
\boldsymbol{\alpha}^*_E:\mathcal{B}(\Omega_{C^\circ}^e)&\rightarrow\mathcal{B}(\Omega_C^e),\,\eta\mapsto r^{-1}_E(\boldsymbol{\nu}^*_E(\eta)).\\
\end{aligned}
\right.
\end{equation*}
For more details, one can refer to \cite[p.\,2018-2019]{Li-Ye-Zhu-The_dual_Minkowski_problem}. Let $\Theta$ be a positive continuous function on $C\setminus\{0\}$. We consider two measures with density $\Theta$: the $\Theta$-weighted spherical Lebesgue measure, denoted by $\Theta\mu_{\mathbb{S}^{n-1}}\llcorner\Omega_C^e$ and the $\Theta$-weighted boundary Hausdorff measure, denoted by $\Theta\mathcal{H}^{n-1}\llcorner\partial_eE$. The measure $\Theta\mathcal{H}^{n-1}\llcorner\partial_eE$ can be pushed forward to the $\Theta$-weighted surface area measure $S^{\Theta}_{n-1}(E,\cdot)$ on $\Omega_{C^\circ}^e$ via the Gauss image map $\boldsymbol{\nu}_E$. Specifically, for any Borel set $\eta\in\mathcal{B}(\Omega_{C^\circ}^e)$,
\begin{equation}\label{weighted-surface-area-measure}
S^{\Theta}_{n-1}(E,\eta)={\boldsymbol{\nu}_E}_{\sharp}(\Theta\mathcal{H}^{n-1}\llcorner\partial_eE)\,(\eta)
=\int_{\boldsymbol{\nu}^*_E(\eta)\cap\partial_eE}\Theta(x)\,d\mathcal{H}^{n-1}(x).
\end{equation}
It is not hard to verify that $S^{\Theta}_{n-1}(E,\cdot)$ is a Radon measure on $\Omega_{C^\circ}^e$. By the standard approximation theory in measure theory, \eqref{weighted-surface-area-measure} is equivalent to the following: for any bounded measurable function $f$ on $\Omega_{C^\circ}^e$,
\begin{equation}\label{weighted-surface-area-measure-equivalent-formula}
\int_{\Omega_{C^\circ}^e}f(v)\,dS^{\Theta}_{n-1}(E,v)=\int_{\partial_eE}f(\nu_E(x))\Theta(x)
\,d\mathcal{H}^{n-1}(x),
\end{equation}
where $\nu_E$ is the Gauss map of $E$ defined $\mathcal{H}^{n-1}$-a.e. on $\partial_eE$.

To transform the boundary integrals and the spherical integrals, one can apply the co-area formula of Federer \cite[Theorem 3.2.22]{Federer-book} and the approximation Jacobian of the radial map. For further details, please refer to \cite[p.\,170]{Hug-Weil-book}, \cite[p.\,2022]{Li-Ye-Zhu-The_dual_Minkowski_problem}, and \cite[p.\,9]{Schneider-A_weighted_Minkowski_theorem}. To ensure rigor, we provide a simple yet important lemma:
\begin{lemma}\label{locally-Lipschitz-radial-map-of-C-pseudo-cone}
The radial map $r_E$ of the $C$-pseudo-cone $E$ is an internally closed Lipschitz homeomorphism between $\Omega_C^e$ and $\partial_eE$. Therefore, we can conclude that the convex hypersurface $\partial E$ is a spherical locally Lipschitz graph on $\Omega_C$.
\end{lemma}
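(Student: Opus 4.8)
The plan is to reduce both assertions to a two-sided local Lipschitz estimate for the radial function $\rho_E$. First I would record two elementary facts. Since $o\notin E$ and $E$ is closed, $\rho_E(u)\ge d(o,E)>0$ for every $u\in\Omega_C$. On the other hand, $E$ being a $C$-pseudo-cone means $E\subset{\rm rec}\,E=C$, so for $u\in\Omega_C=\mathbb{S}^{n-1}\cap{\rm int}\,C$ and any fixed $x_0\in E$ there is $\delta>0$ with $u-\delta x_0/|x_0|\in C={\rm rec}\,E$; then
\[
\frac{|x_0|}{\delta}\,u=x_0+\frac{|x_0|}{\delta}\Bigl(u-\delta\frac{x_0}{|x_0|}\Bigr)\in x_0+{\rm rec}\,E\subset E,
\]
so $\rho_E(u)\le|x_0|/\delta<\infty$. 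Hence $\rho_E$ is finite and positive on $\Omega_C$, and the pseudo-cone property gives $\{t>0:tu\in E\}=[\rho_E(u),\infty)$ for $u\in\Omega_C$, so the ray from $o$ in direction $u$ meets $\partial E$ exactly at $r_E(u)=\rho_E(u)u$; combined with the descriptions of $\partial_eE$ and $\Omega_C^e$ recalled in Section 2 (following \cite{Li-Ye-Zhu-The_dual_Minkowski_problem}) this already shows $r_E\colon\Omega_C^e\to\partial_eE$ is a bijection with inverse $x\mapsto x/|x|$.

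Next I would fix an internally closed subset $K\subseteq\Omega_C^e$; such a $K$ is a compact subset of ${\rm int}\,C\cap\mathbb{S}^{n-1}$, and I would obtain uniform bounds $0<m_K\le\rho_E\le M_K<\infty$ on it: the lower bound $m_K=d(o,E)$ is global, while the upper bound follows from the construction above, since $u\mapsto\sup\{\delta\ge0:u-\delta x_0/|x_0|\in C\}$ is positive and continuous (indeed concave) on ${\rm int}\,C$, hence bounded below on $K$. The one geometric input that is not completely routine is the estimate
\[
\langle u,v\rangle\le-d(u,\partial C)\qquad\text{for all }u\in{\rm int}\,C\cap\mathbb{S}^{n-1},\ v\in C^\circ\cap\mathbb{S}^{n-1},
\]
which follows by noting $B(u,d(u,\partial C))\subset C$ and $\langle\cdot,v\rangle\le0$ on $C$, so testing against $u+\delta'v$ and letting $\delta'\uparrow d(u,\partial C)$. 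In particular $|\langle u,v\rangle|\ge\delta_K:=\inf_{u\in K}d(u,\partial C)>0$ uniformly over $u\in K$ and all $v\in C^\circ\cap\mathbb{S}^{n-1}$.

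The main step is the Lipschitz estimate. For $u_1,u_2\in K$ set $x_i=r_E(u_i)\in{\rm int}\,C$ and pick an outer unit normal $v_i$ of $E$ at $x_i$. Since ${\rm rec}\,E=C$ forces $v_i\in C^\circ\setminus\{0\}$, and $\langle x_i,v_i\rangle=0$ would force $x_i\in\partial C$ (impossible, as $x_i\in{\rm int}\,C$), we have $h_E(v_i)=\langle x_i,v_i\rangle<0$. From $\langle y,v_1\rangle\le h_E(v_1)$ for all $y\in E$ and $\langle u_2,v_1\rangle<0$ we get $\rho_E(u_2)\ge h_E(v_1)/\langle u_2,v_1\rangle$, while $\rho_E(u_1)=h_E(v_1)/\langle u_1,v_1\rangle$; subtracting,
\[
\rho_E(u_2)-\rho_E(u_1)\ge h_E(v_1)\,\frac{\langle u_1-u_2,v_1\rangle}{\langle u_1,v_1\rangle\langle u_2,v_1\rangle}\ge-\frac{M_K}{\delta_K^2}\,|u_1-u_2|,
\]
using $|h_E(v_1)|=|\langle x_1,v_1\rangle|\le|x_1|=\rho_E(u_1)\le M_K$ and $\langle u_1,v_1\rangle\langle u_2,v_1\rangle\ge\delta_K^2$. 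The same inequality with $1$ and $2$ interchanged yields $|\rho_E(u_1)-\rho_E(u_2)|\le(M_K/\delta_K^2)|u_1-u_2|$, so $\rho_E$ is Lipschitz on $K$; then $r_E(u)=\rho_E(u)u$ is Lipschitz on $K$ (bounded times Lipschitz) and $r_E^{-1}(x)=x/|x|$ is Lipschitz on $r_E(K)\subset\{m_K\le|x|\le M_K\}$. This proves $r_E$ is an internally closed (i.e.\ locally) bi-Lipschitz homeomorphism. The last assertion is then immediate: running the same estimate with compact $K\subset\Omega_C$, near any $x_0\in\partial_eE$ the surface $\partial E$ is the graph $\{\rho_E(u)u:u\in U\}$ over a neighbourhood $U\subset\Omega_C$ of $x_0/|x_0|$ with $\rho_E$ Lipschitz on $\overline U$, i.e.\ $\partial E$ is a spherical locally Lipschitz graph on $\Omega_C$. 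I expect the only real work to be the bookkeeping in this estimate — in particular checking that at every effective boundary point over $\Omega_C$ one may select a supporting hyperplane whose normal pairs strictly, and uniformly, negatively with the nearby directions, which is exactly where $o\notin E$ and the bound $\langle u,v\rangle\le-d(u,\partial C)$ enter.
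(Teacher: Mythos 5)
Your argument is correct, but it takes a different route from the paper's. The paper proves the Lipschitz estimate by representing $E$ as the epigraph of a convex function $f$ over a hyperplane $H$ with $C\setminus\{0\}\subset H^+\setminus H$: on a compact $\omega\subset\Omega_C$ it writes $r_E(u_i)=(x_i,f(x_i))$, invokes the Lipschitz continuity of the convex function $f$ on a convex body $K\subset H$ to get $|r_E(u_1)-r_E(u_2)|\leqslant\sqrt{1+C_1^2}\,|x_1-x_2|$, and then uses the two-sided bound $\tfrac1{C_2}<\rho_E<C_2$ on $\omega$ to compare $|x_1-x_2|$ with $|u_1-u_2|$; the Lipschitz property of $r_E^{-1}(x)=x/|x|$ is simply observed, since $\partial_eE$ stays away from $o$. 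You instead work entirely with the radial function: the supporting normals of $E$ lie in $C^\circ$ because ${\rm rec}\,E=C$, the transversality bound $\langle u,v\rangle\leqslant-d(u,\partial C)$ gives the uniform lower bound $\delta_K$ on compact subsets of $\Omega_C$, and the standard support-plane comparison yields the explicit estimate $|\rho_E(u_1)-\rho_E(u_2)|\leqslant(M_K/\delta_K^2)|u_1-u_2|$ — the same argument that shows the radial function of a convex body with $o$ in its interior is Lipschitz, adapted to ``star-shapedness at infinity.'' Your route buys an explicit Lipschitz constant and avoids the epigraph representation (and the slightly delicate comparison of $|u_1-u_2|$ with $|x_1-x_2|$ that the paper passes over quickly); the paper's route is shorter because it outsources the analytic content to the Lipschitz continuity of convex functions. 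Two minor points, neither a gap: both you and the paper prove the estimate only on compact subsets of $\Omega_C$ (not of $\Omega_C^e$, which may contain directions in $\partial\Omega_C$), so your scope matches the paper's reading of ``internally closed''; and your use of $h_E(v_1)$ for a normal $v_1$ possibly in $\partial\Omega_{C^\circ}$ is only a notational extension of \eqref{Definition-the-support-function}, harmless since the supremum is attained at $x_1$. The bijectivity of $r_E$ with inverse $x\mapsto x/|x|$ is, in both treatments, essentially quoted from the setup of \cite{Li-Ye-Zhu-The_dual_Minkowski_problem}, and your sketch of it (finiteness and positivity of $\rho_E$, plus $x+\mathrm{int}\,({\rm rec}\,E)\subset\mathrm{int}\,E$) is sound.
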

\begin{proof}
There exists a hyperplane $H\cong\mathbb{R}^{n-1}$ such that $E$ can be represented by the epigraph of some convex function $f$ on $H$, with $C\setminus\{0\}\subset H^+\setminus H$. For any compact subset $\omega\subset\Omega_C$ and any $u_1,u_2\in\omega$, there exists an $(n-1)$-dimensional convex body $K\subset H$ such that
\begin{equation*}
r_E(u_i)=(x_i,f(x_i)),\,x_i\in K,\,i=1,2.
\end{equation*}
Since $f$ is a convex function on $\mathbb{R}^{n-1}$, it follows that $f$ is Lipschitz on $K$, and we denote the Lipschitz constant by $\text{Lip}(f)=C_1$. Thus, one has
\begin{equation*}
|r_E(u_1)-r_E(u_2)|=\sqrt{|x_1-x_2|^2+|f(x_1)-f(x_2)|^2}\leqslant\sqrt{1+C_1^2}\ |x_1-x_2|.
\end{equation*}
Since $\omega\subset C\setminus\{0\}\subset H^+\setminus H$ is compact and $o\notin E$, there exists a constant $C_2$, depending only on $\omega$ and $E$, such that $\frac{1}{C_2}<\rho_E(u)<C_2$ for all $u\in\omega$. Note that $u_i$ can be represented as $u_i=\bigg(\frac{1}{\rho_E(u_i)}x_i,\sqrt{1-\big(\frac{|x_i|}{\rho_E(u_i)}\big)^2}\bigg)$ for $i=1,2$. Thus
\begin{equation*}
|u_1-u_2|\geqslant\Big|\frac{1}{\rho_E(u_1)}x_1-\frac{1}{\rho_E(u_2)}x_2\Big|
\geqslant\Big|\frac{1}{C_2}x_1-\frac{1}{C_2}x_2\Big|.
\end{equation*}
Therefore, one has
\begin{equation*}
|r_E(u_1)-r_E(u_2)|\leqslant\sqrt{1+C_1^2}\ |x_1-x_2|\leqslant C_2\sqrt{1+C_1^2}\ |u_1-u_2|,
\end{equation*}
which shows that $r_E$ is a Lipschitz mapping on $\omega$, i.e., $r_E$ is internally closed Lipschitz on $\Omega_C^e$. It is clear that $r^{-1}_E$ is Lipschitz on $\partial_eE$.
\end{proof}
Due to Lemma \ref{locally-Lipschitz-radial-map-of-C-pseudo-cone}, $r_E$ is differentiable almost everywhere on $\Omega_C^e$ and $r^{-1}_E$ is differentiable almost everywhere on $\partial_eE$. Let $v\in\Omega_{C^\circ}$ be a regular normal vector of $E$, $x=\boldsymbol{\nu}_E^*(v)$ and $u=r_E^{-1}(x)$. By establishing the orthonormal frame $\{e_1,\cdots,e_{n-1}\}$ of $\mathbb{S}^{n-1}$ at $v$, we obtain the orthogonal decomposition:
\begin{equation*}
\left\{
\begin{aligned}
u&=\sum_{i=1}^{n-1}\langle u,e_i\rangle e_i+\langle u,v\rangle v\triangleq\sum_{i=1}^{n-1}s_ie_i+sv,\\
x&=\rho_E(u)u=\sum_{i=1}^{n-1}\langle x,e_i\rangle e_i+\langle x,v\rangle v\triangleq\sum_{i=1}^{n-1}h_ie_i+h_E(v)v,
\end{aligned}
\right.
\end{equation*}
where $h_i$ is the covariant partial derivative of the support function $h$ with respect to the frame $\{e_1,\cdots,e_{n-1}\}$. Denote by $\text{Id}$ the identity mapping on $T_x\partial E$, then the tangent mapping of $r_E^{-1}$ at $x$ is given by (noting that $r^{-1}_E$ is differentiable $\mathcal{H}^{n-1}$-a.e. on $\partial_eE$):
\begin{equation*}
d\Big(\frac{x}{|x|}\Big)=\frac{1}{|x|}(\text{Id}-u\otimes(u\otimes\text{Id})).
\end{equation*}
Therefore, using the Gram matrix of $\Big\{d\Big(\frac{x}{|x|}\Big)(e_1),\cdots,d\Big(\frac{x}{|x|}\Big)(e_{n-1})\Big\}$, we have
\begin{equation*}
\Big|d\Big(\frac{x}{|x|}\Big)(e_1)\wedge\cdots\wedge d\Big(\frac{x}{|x|}\Big)(e_{n-1})\Big|=\frac{\overline{h}}{|x|^n},
\end{equation*}
thus the $(\mathcal{H}^{n-1},n-1)$-approximate Jacobian of $r_E^{-1}$ at $\mathcal{H}^{n-1}$-a.e. $x\in\partial_eE$ is
\begin{equation*}
ap\,J_{n-1}r_E^{-1}(x)=\Big|d\Big(\frac{x}{|x|}\Big)(e_1)\wedge\cdots\wedge d\Big(\frac{x}{|x|}\Big)(e_{n-1})\Big|
=\frac{|h_E(v)|}{|x|^n}=\frac{|\langle v,u\rangle|}{\rho_E^{n-1}(u)}.
\end{equation*}
Since $r_E\circ r_E^{-1}=\text{Id}$, the $(\mathcal{H}^{n-1},n-1)$-approximate Jacobian of $r_E$ at $\mu_{\mathbb{S}^{n-1}}$-a.e. $u\in\Omega_C^e$ is
\begin{equation*}
ap\,J_{n-1}r_E(u)=(ap\,J_{n-1}r_E^{-1}(x))^{-1}=\frac{\rho_E^{n-1}(u)}{|\langle v,u\rangle|}=\frac{\rho_E^n(u)}{|\langle x,v\rangle|}.
\end{equation*}
Applying the co-area formula in \cite[Theorem 3.2.22]{Federer-book}, for any integrable functions $f$ on $\Omega_C^e$ and $g$ on $\partial_eE$, we have
\begin{equation}\label{The-co-area-formula-in-convex-geometry}
\left\{
\begin{aligned}
\int_{\Omega_C^e}f(u)\frac{\rho_E^n(u)}{\overline{h}_E(\alpha_E(u))}\,d\mu_{\mathbb{S}^{n-1}}(u)=\int_{\partial_eE}f\Big(\frac{x}{|x|}\Big)
\,d\mathcal{H}^{n-1}(x),\\
\int_{\partial_eE}g(x)\frac{\overline{h}_E(\nu_E(x))}{|x|^n}\,d\mathcal{H}^{n-1}(x)=\int_{\Omega_C^e}g(r_E(u))\,d\mu_{\mathbb{S}^{n-1}}(u).\\
\end{aligned}
\right.
\end{equation}

\section{Asymptotic analysis of $C$-pseudo-cones} \label{section-asymptotic}

As an unbounded closed convex set in $C$, the $C$-pseudo-cone is a central research object in unbounded Brunn-Minkowski theory. In this section, we will discuss some asymptotic properties of $C$-pseudo-cones. These properties are useful for establishing the Brunn-Minkowski theory for more general unbounded closed convex sets.

\begin{lemma}\label{C-asymptotic-set}
If $\mathbb{A}$ is a $C$-asymptotic set, then $\mathbb{A}$ is a $C$-pseudo-cone and
\begin{equation*}
\mathbb{A}=\bigcap_{v\in\Omega_{C^\circ}}H^-(v,h_{\mathbb{A}}(v)).
\end{equation*}
\end{lemma}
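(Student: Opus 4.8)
The plan is to prove the two assertions in turn: first that a $C$-asymptotic set $\mathbb{A}$ is a $C$-pseudo-cone, and then that it equals the intersection of the negative half-spaces associated with its support function. For the first assertion, recall that $\mathbb{A}$ is by hypothesis an unbounded closed convex set with $o\notin\mathbb{A}\subset C$, and that $C\setminus\mathbb{A}$ has finite volume (since $\mathbb{A}$ is a $C$-close set). I would first argue that ${\rm rec}\,\mathbb{A}=C$. The inclusion ${\rm rec}\,\mathbb{A}\subset{\rm rec}\,C=C$ is immediate since $\mathbb{A}\subset C$. For the reverse inclusion, I would use the asymptotic condition \eqref{limin-formula-C-asymptotic-set}: if some direction $w\in\text{int}\,C$ failed to lie in ${\rm rec}\,\mathbb{A}$, then the half-line $\{x_0+tw:t\geqslant0\}$ from a point $x_0\in\mathbb{A}$ would exit $\mathbb{A}$, and convexity together with the fact that $\mathbb{A}$ is "wedged" between $\partial C$ and itself would force a whole open cone's worth of points of $C$ to lie outside $\mathbb{A}$ arbitrarily far out, contradicting either $\text{vol}(C\setminus\mathbb{A})<\infty$ or, more directly, the asymptotic condition forcing $\partial\mathbb{A}$ to hug $\partial C$. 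Hence ${\rm rec}\,\mathbb{A}=C$, and since $C$ is line-free and $n$-dimensional, so is $\mathbb{A}$; by the Definition in Section 2, $\mathbb{A}\subset{\rm rec}\,\mathbb{A}=C$ means $\mathbb{A}$ is a pseudo-cone, and with ${\rm rec}\,\mathbb{A}=C$ it is a $C$-pseudo-cone.

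For the representation formula, one inclusion is trivial: since $\mathbb{A}$ is convex and closed, $h_{\mathbb{A}}(v)=\max\{\langle x,v\rangle:x\in\mathbb{A}\}$ is well-defined and finite for each $v\in\Omega_{C^\circ}$ (finiteness because $\langle x,v\rangle<0$ is bounded above by $0$ for $x\in C$, $v\in C^\circ$), and $\mathbb{A}\subset H^-(v,h_{\mathbb{A}}(v))$ for every such $v$, giving $\mathbb{A}\subset\bigcap_{v\in\Omega_{C^\circ}}H^-(v,h_{\mathbb{A}}(v))$. The content is the reverse inclusion. Here I would use the description of ${\rm conv}(\mathbb{A},C)$ in Definition \ref{Def-C-closed-convex-hull}, whose last line reads $C\cap\bigcap_{v\in\Omega_{C^\circ}}\{H_v^-:\mathbb{A}\subset H_v^-\}$. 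Since $\mathbb{A}$ is a $C$-pseudo-cone, the Lemma after Definition \ref{Def-C-closed-convex-hull} gives ${\rm conv}(\mathbb{A},C)=\mathbb{A}$, so $\mathbb{A}=C\cap\bigcap_{v\in\Omega_{C^\circ}}\{H_v^-:\mathbb{A}\subset H_v^-\}$. It then remains to see that intersecting only over the \emph{supporting} half-spaces $H^-(v,h_{\mathbb{A}}(v))$, $v\in\Omega_{C^\circ}$, already cuts down to $\mathbb{A}$ and that the extra factor $C$ is redundant; the first point is because the minimal half-space with outer normal $v$ containing $\mathbb{A}$ is exactly $H^-(v,h_{\mathbb{A}}(v))$, and the second because the boundary cone directions are recovered as limits $v\to\partial\Omega_{C^\circ}$ of the normals, using that ${\rm rec}\,\mathbb{A}=C$ forces $h_{\mathbb{A}}(v)\to0$ as $v\to\partial\Omega_{C^\circ}$ so that $\bigcap_v H^-(v,h_{\mathbb{A}}(v))\subset C$ automatically.

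The main obstacle I anticipate is the careful handling of the boundary behaviour near $\partial\Omega_{C^\circ}$: showing that the supporting half-spaces indexed by \emph{interior} normals of $C^\circ$ suffice to carve out $\mathbb{A}$ without separately imposing membership in $C$. This is exactly the point where the asymptotic hypothesis \eqref{limin-formula-C-asymptotic-set} (equivalently, $h_{\mathbb{A}}(v)\to 0$ as $v\to\partial\Omega_{C^\circ}$, which is essentially Theorem \ref{Theorem-the-characteristic-of-C-pseudo-cone}(i) with $z=o$) must be invoked, since for a general $C$-pseudo-cone the half-space intersection over $\Omega_{C^\circ}$ need not be contained in $C$. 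I would make this quantitative: given $y\notin\mathbb{A}$, either $y\notin C$, in which case some $v_0\in\text{int}\,C^\circ$ separates $y$ from $C\supset\mathbb{A}$ with $\langle y,v_0\rangle>0\geqslant h_{\mathbb{A}}(v_0)$; or $y\in C\setminus\mathbb{A}$, in which case the standard separation theorem for the closed convex set $\mathbb{A}$ yields $v_1\in\mathbb{S}^{n-1}$ with $\langle y,v_1\rangle>h_{\mathbb{A}}(v_1)$, and a short argument (perturbing $v_1$ slightly, using that $\mathbb{A}$ has recession cone $C$ so that any separating normal must lie in $\overline{C^\circ}$, then pushing it into $\text{int}\,C^\circ$ while preserving strict separation) places the separating normal in $\Omega_{C^\circ}$. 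Either way $y\notin H^-(v,h_{\mathbb{A}}(v))$ for some $v\in\Omega_{C^\circ}$, which proves the remaining inclusion.
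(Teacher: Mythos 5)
Your second half is essentially sound and, once the first half is granted, it reproduces (in abstract form) what the paper does: for $y\notin C$ you separate $y$ from the cone and push the normal into $\mathrm{int}\,C^\circ$ (the paper does this by an explicit construction with metric projections and angles), and for $y\in C\setminus\mathbb{A}$ the perturbation $v_\varepsilon=(1-\varepsilon)v_1+\varepsilon w_0$ with $w_0\in\mathrm{int}\,C^\circ$ works because $h_{\mathbb{A}}$ is convex and $\leqslant 0$ on $C^\circ$, so strict separation survives for small $\varepsilon$. One slip in passing: your claim that ``$\mathrm{rec}\,\mathbb{A}=C$ forces $h_{\mathbb{A}}(v)\to 0$ as $v\to\partial\Omega_{C^\circ}$'' is false (take $\mathbb{A}=x_0+C$ with $x_0\in\mathrm{int}\,C$); fortunately your final quantitative argument never uses it.

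The genuine gap is the first step, $C\subset\mathrm{rec}\,\mathbb{A}$, on which everything else hinges (it is exactly what guarantees that the separating normal $v_1$ of a point $y\in C\setminus\mathbb{A}$ lies in $C^\circ$). Your justification is a sketch, and one of its two supports is based on a false premise: a $C$-asymptotic set need not be a $C$-close set, so ``$\mathrm{vol}(C\setminus\mathbb{A})<\infty$'' is not available --- the paper itself treats $\mathbb{A}=\{(x,y)\in C\,|\,y\geqslant 1/x\}$, whose co-volume is infinite, as a $C$-asymptotic set. The ``more direct'' route is not actually carried out: from the fact that a ray $x_0+tw$, $w\in\mathrm{int}\,C$, exits $\mathbb{A}$ you do not immediately obtain boundary points of $\mathbb{A}$ that are both far from the origin and far from $\partial C$; condition \eqref{limin-formula-C-asymptotic-set} constrains where $\partial\mathbb{A}$ lies, not that $\mathbb{A}$ reaches out near every part of $\partial C$, so converting $w\notin\mathrm{rec}\,\mathbb{A}$ into a violation of \eqref{limin-formula-C-asymptotic-set} is precisely the substantive content of the lemma --- it is the same content as the facts the paper invokes at this point (all normal vectors of $\mathbb{A}$ lie in $\Omega_{C^\circ}\cup\partial\Omega_{C^\circ}$ and $h_{\mathbb{A}}=0$ on $\partial\Omega_{C^\circ}$). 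That this step cannot be waved through is illustrated by the boundary ray $\{su\,|\,s\geqslant 1\}$, $u\in\partial\Omega_C$, which satisfies the literal limit condition yet has recession cone a single ray: any correct argument must genuinely combine convexity, full-dimensionality and the way \eqref{limin-formula-C-asymptotic-set} propagates to the sets $\mathbb{A}\cap C(t)$ (for instance, show that for large $t$ the slice $\mathbb{A}\cap C(t)$ must contain the inner parallel slice of $C(t)$, and then use that $y_k\in\mathbb{A}$, $|y_k|\to\infty$, $y_k/|y_k|\to u$ implies $u\in\mathrm{rec}\,\mathbb{A}$). Until this step is supplied, the proposal does not prove the lemma.
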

\begin{proof}
For a $C$-asymptotic set $\mathbb{A}$, its normal vectors belong to $\Omega_{C^\circ}\cup\partial\Omega_{C^\circ}$ due to formula \eqref{limin-formula-C-asymptotic-set}. Since $\mathbb{A}$ is also a closed convex set and $h_{\mathbb{A}}(w)=0$ for any $w\in\partial\Omega_{C^\circ}$, we have
\begin{equation*}
\begin{aligned}
\mathbb{A}&=\bigcap_{v\in\Omega_{C^\circ}\cup\partial\Omega_{C^\circ}}H^-(v,h_{\mathbb{A}}(v))\\
&=\bigg(\bigcap_{w\in\partial\Omega_{C^\circ}}H^-(w,h_{\mathbb{A}}(w))\bigg)\bigcap
\bigg(\bigcap_{v\in\Omega_{C^\circ}}H^-(v,h_{\mathbb{A}}(v))\bigg)\\
&\supset C\cap\bigcap_{v\in\Omega_{C^\circ}}H^-(v,h_{\mathbb{A}}(v)).
\end{aligned}
\end{equation*}
Combining this with Definition \ref{Def-C-closed-convex-hull}, we have
\begin{equation*}
\begin{aligned}
\text{conv}(\mathbb{A},C)&=C\cap\bigcap_{v\in\Omega_{C^\circ}}\{H_v^-|\,\mathbb{A}\subset H_v^-\}\\
&\subset C\cap\bigcap_{v\in\Omega_{C^\circ}}H^-(v,h_{\mathbb{A}}(v))\\
&\subset\mathbb{A}\\
&\subset\text{conv}(\mathbb{A},C),
\end{aligned}
\end{equation*}
which shows that $\mathbb{A}$ is a $C$-compatible set ($\mathbb{A}$ is also a $C$-pseudo-cone), and hence
\begin{equation*}
\mathbb{A}=C\cap\bigcap_{v\in\Omega_{C^\circ}}H^-(v,h_{\mathbb{A}}(v)).
\end{equation*}

Suppose there exists a point $x\in\bigcap_{v\in\Omega_{C^\circ}}H^-(v,h_{\mathbb{A}}(v))$ but $x\notin C$.Let $p_C(x)$ and $p_{C^\circ}(x)$ denote the metric projection of $x$ on $C$ and $C^\circ$, respectively. Define $d(x,C)=|x-p_C(x)|$. Let
\begin{equation*}
\left\{
\begin{aligned}
\alpha&=\arctan\frac{\delta d(x,C)}{\sqrt{|x|^2-d(x,C)^2}},\\
\beta&=\arctan\frac{d(x,C)}{\sqrt{|x|^2-d(x,C)^2}},
\end{aligned}
\right.
\end{equation*}
where $\delta>0$ is sufficiently small such that
\begin{equation*}
v_0=\frac{p_{C^\circ}(x)-|p_{C^\circ}(x)|(\tan\alpha)\frac{p_C(x)}{|p_C(x)|}}{\sqrt{(|p_{C^\circ}(x)|\tan\alpha)^2+
|p_{C^\circ}(x)|^2}}\in\Omega_{C^\circ}.
\end{equation*}
Thus, $x\in H^-(v_0,h_{\mathbb{A}}(v_0))$, which implies $\langle x,v_0\rangle\leqslant h_{\mathbb{A}}(v_0)<0$. However,
\begin{equation*}
\begin{aligned}
\langle x,v_0\rangle&=\frac{\langle x,p_{C^\circ}(x)\rangle-|p_{C^\circ}(x)|(\tan\alpha)
\frac{\langle x,p_C(x)\rangle}{|p_C(x)|}}{\sqrt{(|p_{C^\circ}(x)|\tan\alpha)^2+|p_{C^\circ}(x)|^2}}\\
&=\frac{|x||p_{C^\circ}(x)|\cos(\frac{\pi}{2}-\beta)-|p_{C^\circ}(x)|(\tan\alpha)\frac{|x||p_C(x)|\cos\beta}{|p_C(x)|}}
{\sqrt{(|p_{C^\circ}(x)|\tan\alpha)^2+|p_{C^\circ}(x)|^2}}\\
&=\frac{|x||p_{C^\circ}(x)|}{\sqrt{(|p_{C^\circ}(x)|\tan\alpha)^2+|p_{C^\circ}(x)|^2}}(\sin\beta-\tan\alpha\cos\beta)>0,
\end{aligned}
\end{equation*}
where we used
\begin{equation*}
\sin\beta-\tan\alpha\cos\beta=\cos\beta(\tan\beta-\tan\alpha)=\frac{(1-\delta)d(x,C)\cos\beta}{\sqrt{|x|^2-d(x,C)^2}}>0.
\end{equation*}
This contradiction shows that $\bigcap_{v\in\Omega_{C^\circ}}H^-(v,h_{\mathbb{A}}(v))\subset C$, which implies
\begin{equation*}
\mathbb{A}=C\cap\bigcap_{v\in\Omega_{C^\circ}}H^-(v,h_{\mathbb{A}}(v))=\bigcap_{v\in\Omega_{C^\circ}}H^-(v,h_{\mathbb{A}}(v)).
\end{equation*}
\end{proof}

Let $E$ be a $C$-pseudo-cone and $p_E(o)$ be the metric projection of the origin $o$ on $E$, $u\in\partial C\cap\mathbb{S}^{n-1}$. Denote by $l_{x,u}=\{x+\lambda u\,|\,\forall\ \lambda>0,\ x\in\mathbb{R}^n\ \text{and}\ u\in\mathbb{S}^{n-1}\}$ the ray starting at $x\in\mathbb{R}^n$ in the direction $u\in\mathbb{S}^{n-1}$. If $E\subset x_0+C$ for some $x_0\in C$, we define the following set
\begin{equation*}
\partial E|_u=\{x\in\partial E\,|\,\exists\lambda,\mu\geqslant0\ \text{such that}\,  x=x_0+\lambda(p_E(o)-x_0)+\mu u\},
\end{equation*}
then there exists a boundary partition
\begin{equation*}
\partial E=\bigcup_{u\in\partial C\cap\mathbb{S}^{n-1}}\partial E|_u.
\end{equation*}
Moreover, we define the function $f_{u,x_0}$ as $f_{u,x_0}(|x|)=d(x,l_{x_0,u}),\ x\in\partial E|_u$.

\begin{lemma}\label{uniformly-lemma}
Let $E$ be a $C$-pseudo-cone, $u\in\partial C\cap\mathbb{S}^{n-1}$ and $x_0\in C$. If $E\subset x_0+C$, then the function $f_{u,x_0}$ is monotonically decreasing. Therefore, there exists the following limiting function
\begin{equation}\label{limit-decreasing}
\lim_{x\in\partial E|_u,\,|x|\rightarrow+\infty}f_{u,x_0}(|x|)\triangleq \widetilde{f}_{u,x_0}.
\end{equation}
Moreover, the following holds
\begin{equation*}
f_{u,x_0}(|x|)-\widetilde{f}_{u,x_0}\rightrightarrows0,\ \mbox{uniformly on}\ \partial\Omega_C,\ \mbox{as}\ |x|\rightarrow+\infty.
\end{equation*}
\end{lemma}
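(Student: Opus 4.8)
The plan is to establish the monotonicity of $f_{u,x_0}$ first, then deduce the existence of the limit, and finally upgrade pointwise convergence to uniform convergence on $\partial\Omega_C$ using a compactness argument. For the monotonicity: fix $u\in\partial C\cap\mathbb{S}^{n-1}$ and let $x,y\in\partial E|_u$ with $|x|<|y|$. By definition both points lie in the two-dimensional half-plane $P_u$ spanned by $x_0$, $p_E(o)-x_0$ and $u$ (more precisely, both are reached from $x_0$ by a nonnegative combination of $p_E(o)-x_0$ and $u$), so it suffices to argue inside this half-plane. Since $E$ is a $C$-pseudo-cone with ${\rm rec}\,E=C$ and $E\subset x_0+C$, the ray $l_{x_0,u}$ lies in $\partial(x_0+C)\subset$ the complement of ${\rm int}\,E$, and $u\in{\rm rec}\,E$ means $x+\lambda u\in E$ for all $\lambda\ge 0$ whenever $x\in E$. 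The key geometric fact is that the boundary curve $\partial E\cap P_u$, parametrized by $|x|$, is convex and has $l_{x_0,u}$ as an asymptote direction; hence its distance to $l_{x_0,u}$, which is exactly $f_{u,x_0}(|x|)$, is a convex function of arclength that is bounded below (by $0$) and therefore nonincreasing along the unbounded branch. I would make this rigorous by writing $E\cap P_u$ as the epigraph of a convex function over the line $l_{x_0,u}$ (using that $u$ is a recession direction, so "vertical" lines in the $u$-direction meet $E$ in rays), and observing that the graphing function is convex, nonnegative, and finite, forcing it to be monotone decreasing in $|x|$ once $|x|$ is large; a convex nonnegative function on a half-line is eventually monotone, and here the pseudo-cone structure ($\lambda x\in E$ for $\lambda\ge 1$) pins monotonicity down from the start.

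Given monotonicity and the lower bound $f_{u,x_0}\ge 0$, the limit $\widetilde f_{u,x_0}$ in \eqref{limit-decreasing} exists for each $u$ by the monotone convergence of real sequences. The substance of the lemma is the uniform convergence $f_{u,x_0}(|x|)-\widetilde f_{u,x_0}\rightrightarrows 0$ over $u\in\partial\Omega_C$. Here I would invoke Dini's theorem: the family of functions $u\mapsto f_{u,x_0}(r)$ (indexed by the radial parameter $r$) decreases pointwise to the limit function $u\mapsto\widetilde f_{u,x_0}$ as $r\to\infty$; if I can show that for each fixed $r$ the map $u\mapsto f_{u,x_0}(r)$ is continuous on the compact set $\partial\Omega_C$, and that the limit function $u\mapsto\widetilde f_{u,x_0}$ is also continuous, then Dini's theorem yields uniform convergence. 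Continuity of $u\mapsto f_{u,x_0}(r)$ follows from continuity of the radial/support function of $E$ together with continuity of the metric-projection-based quantities $p_E(o)$, $l_{x_0,u}$ in $u$; continuity of the limit function will follow either from uniform continuity estimates on $f_{u,x_0}$ that are independent of $u$, or — more cleanly — from the fact that $\widetilde f_{u,x_0}$ is itself an infimum of continuous functions and an easy two-sided bound forces it to be continuous, which I would extract from the asymptotic flatness of $\partial E$ relative to $\partial C$ near infinity.

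The main obstacle I anticipate is precisely verifying the continuity (and hence applicability of Dini) of the limiting function $u\mapsto\widetilde f_{u,x_0}$ uniformly over the boundary sphere $\partial\Omega_C$: a priori the rate at which $f_{u,x_0}(|x|)$ approaches its limit could degenerate as $u$ approaches a "bad" direction of $\partial C$ (e.g. near an edge of the cone where the boundary of $E$ is less regular), so one must rule out this degeneration. I expect to handle it by a covering argument: cover $\partial\Omega_C$ by finitely many compact patches on each of which $\partial E$ is, by Lemma \ref{locally-Lipschitz-radial-map-of-C-pseudo-cone}, a Lipschitz graph with controlled constant, derive a modulus-of-continuity estimate for $f_{u,x_0}$ in $u$ that is uniform on each patch and independent of $r$, and then patch these together; alternatively one reduces to the one-variable statement by exploiting that $f_{u,x_0}(|x|)\le d(x,\partial C)$ (since $l_{x_0,u}\subset\partial(x_0+C)$ which is "outside" $E$) together with the asymptotic hypothesis and Lemma \ref{C-asymptotic-set}-type reasoning, so that the supremum over $u$ of the tail is squeezed between $0$ and a quantity tending to $0$. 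Either route turns the uniform statement into a finite maximum of one-dimensional monotone limits, which then converges uniformly by elementary means.
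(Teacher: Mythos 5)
Your overall route is the same as the paper's: monotonicity of $f_{u,x_0}$ gives the pointwise limit $\widetilde f_{u,x_0}$, and uniformity over the compact set $\partial\Omega_C$ is extracted from continuity in $u$ (of $f_{u,x_0}(N)$ for fixed radius and of the limit function) together with monotonicity. The paper never names Dini, but its argument --- neighborhoods of the geodesics $\widehat{p\,u}$, a Heine--Borel finite subcover, $N_0=\max N_i$, and the estimate $f_{v,x_0}(|x|)-\widetilde f_{v,x_0}<(N_0+2)\varepsilon$ --- is precisely a hand-rolled Dini argument, and like you it simply asserts the continuity of $u\mapsto\widetilde f_{u,x_0}$ from convexity. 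So your plan matches the paper in structure, and your identification of that continuity as the crux is accurate.

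Two concrete weak points, though. First, your monotonicity step does not deliver what the lemma asserts: a convex nonnegative function on a half-line is only \emph{eventually} monotone and may perfectly well be increasing (e.g.\ $g(s)=s$), and ``the pseudo-cone structure pins monotonicity down from the start'' is a gesture, not an argument. The decrease actually comes from a containment you did not invoke: since $E=\mathrm{conv}(E,C)$, one has $x_1+C\subset E$, hence $l_{x_1,u}\subset E$, so a boundary point $x_2\in\partial E|_u$ with $|x_2|>|x_1|$ is trapped between $l_{x_0,u}$ and $l_{x_1,u}$, giving $f_{u,x_0}(|x_2|)\leqslant f_{u,x_0}(|x_1|)$ immediately (equivalently, your own remark that lines parallel to $u$ meet $E$ in rays already forces the graphing function to be nonincreasing --- no convexity and no ``eventually'' are needed). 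Second, your fallback for uniformity, squeezing via $f_{u,x_0}(|x|)\leqslant d(x,\partial C)$ ``together with the asymptotic hypothesis,'' is not available here: the lemma concerns arbitrary $C$-pseudo-cones (including degenerate ones such as Schneider's example and sets with nonzero starting point), there is no hypothesis forcing $d(x,\partial C)\to 0$, and $\widetilde f_{u,x_0}$ is in general strictly positive --- which is exactly why the statement is about $f_{u,x_0}-\widetilde f_{u,x_0}$. Relatedly, an $r$-independent modulus of continuity in $u$ is doubtful (continuity of $u\mapsto f_{u,x_0}(r)$ degrades roughly linearly in $r$, which is why the paper's bound carries the factor $N_0$); fortunately Dini requires only continuity of each $f_{\cdot,x_0}(r)$ and of the limit, not equicontinuity, so your primary route goes through once the monotonicity step and the continuity of $u\mapsto\widetilde f_{u,x_0}$ are properly supplied.
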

\begin{proof}
For any $u\in\partial C\cap\mathbb{S}^{n-1}$, let $x_1,x_2\in\partial E|_u$ with $|x_1|<|x_2|$. Since a $C$-pseudo-cone is the same as a $C$-compatible set and given that $x_1\in E$, we have
\begin{equation*}
E=\text{conv}(E,C)\supset\text{conv}(\{x_1\},C)=x_1+C\supset\{x_1+\lambda u\,|\,\lambda>0\}=l_{x_1,u}.
\end{equation*}
Combining with $|x_1|<|x_2|$, we find that the ray $l_{x_2,u}$ lies in between $l_{x_0,u}$ and $l_{x_1,u}$. Thus,
\begin{equation*}
f_{u,x_0}(|x_1|)\geqslant f_{u,x_0}(|x_2|).
\end{equation*}

For $x\in\partial E|_u$ with $|x|=N>0$, we denote by $\alpha(u,N)$ the angle between $u$ and $x-x_0$, then $f_{u,x_0}(|x|)=|x-x_0|\,\sin\alpha(u,N)$. Let $p=\frac{p_E(o)-x_0}{|p_E(o)-x_0|}$, and let $\widehat{p\,u}$ be the geodesic curve between $p$ and $u$ on $\mathbb{S}^{n-1}$. By the formula \eqref{limit-decreasing}, we have $\widetilde{f}_{u,x_0}=d(l_{x_0,u},\partial E|_u)$. Due to the continuity of the boundary of the closed convex sets $C$ and $E$, $\widetilde{f}_{u,x_0}$ is continuous with respect to $u$. Similarly, $\sin\alpha(u,N)$ is also continuous with respect to $u$. Therefore, for any $\varepsilon>0$, there exists an open neighborhood $U(\widehat{p\,u})$ of the geodesic curve $\widehat{p\,u}$ in $\mathbb{S}^{n-1}$ such that
\begin{equation}\label{sin-function-estimates}
|\sin\alpha(u,N)-\sin\alpha(v,N)|<\varepsilon
\end{equation}
and
\begin{equation}\label{tilde-f-function-estimates}
|\widetilde{f}_{u,x_0}-\widetilde{f}_{v,x_0}|<\varepsilon
\end{equation}
for all $v\in U(\widehat{p\,u})\cap\partial\Omega_C$ and $N>|p_E(o)|$. Now, the collection $\{U(\widehat{p\,u})\,|\,u\in\partial\Omega_C\}$ forms a family of open covers of the spherical closed convex set $\mathbb{S}^{n-1}\cap C=\bigcup_{u\in\partial\Omega_C}\widehat{p\,u}$. By the Heine-Borel finite covering theorem, there exists a finite sub-family $\{U(\widehat{p\,u_i})\,|\,u_i\in\partial\Omega_C\}_{i=1}^m$ of $\{U(\widehat{p\,u})\,|\,u\in\partial\Omega_C\}$ such that
\begin{equation*}
\mathbb{S}^{n-1}\cap C\subset\bigcup_{i=1}^mU(\widehat{p\,u_i}).
\end{equation*}
Hence, for any $v\in\partial\Omega_C$, there exists an $u_{i_v}$ such that $v\in U(\widehat{p\,u_{i_v}})$ for some $1\leqslant i_v\leqslant m$.

For the above $\varepsilon>0$ and for every $u_i\in\partial\Omega_C$, by the formula \eqref{limit-decreasing}, we have
\begin{equation*}
\lim_{x\in\partial E|_{u_i},\,|x|\rightarrow+\infty}\big(f_{u_i,x_0}(|x|)-\widetilde{f}_{u_i,x_0}\big)=0.
\end{equation*}
Thus, there exists $N_i>0$ such that for all $x\in\partial E|_{u_i}$ with $|x|\geqslant N_i$, the following holds
\begin{equation}\label{u-i-function-estimates}
f_{u_i,x_0}(|x|)-\widetilde{f}_{u_i,x_0}<\varepsilon.
\end{equation}
Now, let $N_0=\max\{N_1,\cdots,N_m\}$. Consider $v\in\partial\Omega_C$ and $x\in\partial E|_v$ with $|x|\geqslant N_0$.\\
Case $1$: If $v=u_i$ for some $1\leqslant i\leqslant m$, then by \eqref{u-i-function-estimates}, we have
\begin{equation*}
f_{v,x_0}(|x|)-\widetilde{f}_{v,x_0}<\varepsilon.
\end{equation*}
Case $2$: If $v\in U(\widehat{p\,u_{i_v}})$ for some $1\leqslant i_v\leqslant m$, then
\begin{equation*}
\begin{aligned}
f_{v,x_0}(|x|)-\widetilde{f}_{v,x_0}&\leqslant f_{v,x_0}(N_0)-\widetilde{f}_{v,x_0}
=N_0\,\sin\alpha(v,N_0)-\widetilde{f}_{v,x_0}\\
&=N_0\big(\sin\alpha(v,N_0)-\sin\alpha(u_{i_v},N_0)+\sin\alpha(u_{i_v},N_0)\big)\\
&\quad-\widetilde{f}_{u_{i_v},x_0}+\widetilde{f}_{u_{i_v},x_0}-\widetilde{f}_{v,x_0}\\
&=N_0\big(\sin\alpha(v,N_0)-\sin\alpha(u_{i_v},N_0)\big)+(f_{u_{i_v},x_0}(N_0)-\widetilde{f}_{u_{i_v},x_0})\\
&\quad+(\widetilde{f}_{u_{i_v},x_0}-\widetilde{f}_{v,x_0})\\
&<(N_0+2)\varepsilon,
\end{aligned}
\end{equation*}
where we used \eqref{sin-function-estimates}, \eqref{tilde-f-function-estimates} and \eqref{u-i-function-estimates}.

Therefore, for the above $\varepsilon>0$, if $|x|\geqslant N_0$, then
\begin{equation*}
0\leqslant f_{v,x_0}(|x|)-\widetilde{f}_{v,x_0}<(N_0+2)\varepsilon
\end{equation*}
for all $v\in\partial\Omega_C$. This shows that $f_{v,x_0}(|x|)-\widetilde{f}_{v,x_0}\rightrightarrows0\ \text{uniformly on}\ \partial\Omega_C$ as $x\in\partial E$ and $|x|\rightarrow+\infty$.
\end{proof}

\begin{definition}[$C$-starting point]\label{Definition-C-starting-point}
Let $E$ be a $C$-pseudo-cone and $z\in C$. We call $z$ as the $C$-starting point of $E$ if $E\subset z+C$ and
\begin{equation*}
f_{u,z}(|x|)\rightrightarrows0,\,\text{uniformly on}\ \partial\Omega_C,
\end{equation*}
as $|x|\rightarrow+\infty$ with $\,x\in\partial E|_u$.
\end{definition}
\begin{remark}
If $z$ is a $C$-starting point of $C$-pseudo-cone $E$, then $z$ is unique. To do so, let $p_1$ and $p_2$ be two $C$-starting points of $E$. For some $\bar{u}\in\partial\Omega_C$, we have
\begin{equation*}
f_{\bar{u},p_1}(|x|)=d(x,l_{p_1,\bar{u}})\rightarrow0,\,f_{\bar{u},p_2}(|x|)=d(x,l_{p_2,\bar{u}})\rightarrow0,
\end{equation*}
$\text{as}\ |x|\rightarrow+\infty \, \text{with}\ x\in\partial E|_{\bar{u}}$. By the uniqueness of limits, it follows that $l_{p_1,\bar{u}}=l_{p_2,\bar{u}}$. Therefore, we conclude that $p_1=p_2$.
\end{remark}

\begin{lemma}\label{Lemma-equivalence-support-function-and-C-starting-point}
Let $E$ be a $C$-pseudo-cone and $z\in C$, then
\begin{equation*}
h_E(v)=\langle z,v\rangle\ \mbox{for all}\ v\in\partial\Omega_{C^\circ}\ \mbox{if and only if}\ z\ \mbox{is the C-starting point of}\ E.
\end{equation*}
\end{lemma}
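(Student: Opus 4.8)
I would prove the two implications separately, using throughout two elementary facts: first, that a pointed $C$ satisfies $C=\bigcap_{v\in\partial\Omega_{C^\circ}}H^-(v,0)$, so that $z+C=\bigcap_{v\in\partial\Omega_{C^\circ}}H^-(v,\langle z,v\rangle)$; second, that for each $u\in\partial\Omega_C$ the set $C^\circ\cap u^\perp$ is a face of $C^\circ$ of dimension at least $1$, so one may always choose a normal $v\in\partial\Omega_{C^\circ}$ with $\langle u,v\rangle=0$, and then $\langle\cdot,v\rangle$ is constant, equal to $\langle z,v\rangle$, along the ray $l_{z,u}$. The key analytic input is Lemma \ref{uniformly-lemma} (monotonicity of $f_{u,\cdot}$, existence and continuity of $\widetilde f_{u,\cdot}$, and uniform convergence $f_{u,z}(|x|)-\widetilde f_{u,z}\rightrightarrows 0$).

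\textbf{The direction ``$z$ is the $C$-starting point $\Rightarrow$ $h_E=\langle z,\cdot\rangle$ on $\partial\Omega_{C^\circ}$''.} By Definition \ref{Definition-C-starting-point}, $E\subset z+C$, so for $x\in E$ and $v\in\partial\Omega_{C^\circ}$ one has $\langle x,v\rangle=\langle x-z,v\rangle+\langle z,v\rangle\le\langle z,v\rangle$ because $x-z\in C$ and $v\in C^\circ$; taking the supremum gives $h_E(v)\le\langle z,v\rangle$. For the reverse inequality, fix $v\in\partial\Omega_{C^\circ}$, pick $u\in\partial\Omega_C$ with $\langle u,v\rangle=0$, and choose $x_k\in\partial E|_u$ with $|x_k|\to+\infty$ (such points exist since $u\in{\rm rec}\,E=C$). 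The $C$-starting point condition gives $f_{u,z}(|x_k|)=d(x_k,l_{z,u})\to 0$; letting $y_k$ be the foot of the perpendicular from $x_k$ onto the line $z+\mathbb{R}u$ — which lies on the ray $l_{z,u}$ once $|x_k|$ is large — we get $|x_k-y_k|\to 0$ and hence $\langle x_k,v\rangle=\langle y_k,v\rangle+\langle x_k-y_k,v\rangle=\langle z,v\rangle+\langle x_k-y_k,v\rangle\to\langle z,v\rangle$. Since $x_k\in E$ this yields $h_E(v)\ge\langle z,v\rangle$, and equality follows.

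\textbf{The converse, and where the difficulty lies.} Assume $h_E(v)=\langle z,v\rangle$ for all $v\in\partial\Omega_{C^\circ}$. The inclusion $E\subset z+C$ is immediate: $\langle x,v\rangle\le h_E(v)=\langle z,v\rangle$ for every $v\in\partial\Omega_{C^\circ}$ puts $x$ in $\bigcap_{v\in\partial\Omega_{C^\circ}}H^-(v,\langle z,v\rangle)=z+C$. Thus Lemma \ref{uniformly-lemma} applies with $x_0=z$, and it suffices to show $\widetilde f_{u,z}=0$ for every $u\in\partial\Omega_C$, since then $f_{u,z}(|x|)=f_{u,z}(|x|)-\widetilde f_{u,z}\rightrightarrows 0$ and $z$ is the $C$-starting point. (One may also first reduce to $z=o$ by passing to $\mathbb{A}=E-z$, which is a $C$-pseudo-cone with $h_{\mathbb{A}}=0$ on $\partial\Omega_{C^\circ}$ and whose boundary has the same asymptotic behaviour as that of $E$ translated by $-z$.) Suppose, for contradiction, that $\widetilde f_{u,z}=c>0$ for some $u$. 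Then $\partial E|_u$ is asymptotic to a ray $l_{z',u}$ with $z'=z+cw$, $|w|=1$, $w\perp u$, and the tail of $l_{z',u}$ lies in $E$; testing this tail against any $v\in\partial\Omega_{C^\circ}$ with $\langle u,v\rangle=0$ and invoking $h_E(v)=\langle z,v\rangle$ forces $\langle w,v\rangle\le 0$, i.e.\ $w$ is a nonzero inward tangent direction of $C$ at $u$ orthogonal to $u$.

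The remaining task — ruling out such a $w$ — is, I expect, the main obstacle. It cannot be done from a single identity $h_E(v)=\langle z,v\rangle$: even a rotund cone shows that $\langle x_k-z,v\rangle\to 0$ does \emph{not} force $x_k-z$ to remain near the face $C\cap v^\perp$, so one genuinely needs the whole family of identities. The route I would follow is the supporting-normal analysis behind Lemma \ref{C-asymptotic-set}: at far-out points $x_k\in\partial E|_u$ with outer unit normals $\nu_k\in\overline{\Omega_{C^\circ}}$, the face of $E$ exposed by any interior normal of $C^\circ$ is bounded, which forces $\nu_k\to\partial\Omega_{C^\circ}$; then, using that $h_E$ is continuous on $\overline{\Omega_{C^\circ}}$ (from $h_E=\langle z,\cdot\rangle$ on $\partial\Omega_{C^\circ}$ together with lower semicontinuity of support functions) one gets $\langle x_k-z,\nu_k\rangle\to 0$, whence $\nu_k\to\nu^*$ with $\langle u,\nu^*\rangle=\langle w,\nu^*\rangle=0$; combining this with the position of the offset ray $l_{z',u}$ and the pointedness of $C$ (two distinct parallel rays cannot both lie in $\partial C$, as any plane through them would meet the pointed cone $C$ in a $2$-dimensional cone whose two edges are non-parallel) should produce the contradiction. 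Finally, once it is known that $E$ possesses a $C$-starting point, the uniqueness statement in the Remark following Definition \ref{Definition-C-starting-point}, together with the fact that $\partial\Omega_{C^\circ}$ spans $\mathbb{R}^n$, shows that the point $z$ in the hypothesis must coincide with it — so the whole content of the converse is precisely this conversion of the full family of support identities into uniform asymptotic control of $\partial E$.
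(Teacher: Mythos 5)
The direction ``$z$ is the $C$-starting point $\Rightarrow h_E=\langle z,\cdot\rangle$ on $\partial\Omega_{C^\circ}$'' is fine: your direct limit argument along $\partial E|_u$ with $u\perp v$ is correct and is essentially the same content as the paper's part (ii), which runs the same geometry as a proof by contradiction. The problem is the converse, which is the substantial half of the lemma, and there your proposal has a genuine gap that you yourself flag. You correctly reduce it to showing $\widetilde f_{u,z}=0$, and from a putative offset $\widetilde f_{u,z}=c>0$ you extract a unit vector $w\perp u$ with $\langle w,v\rangle\leqslant 0$ for every $v\in\partial\Omega_{C^\circ}$ orthogonal to $u$, plus (after the normal-vector analysis) a limit normal $\nu^*\in\partial\Omega_{C^\circ}$ with $\langle u,\nu^*\rangle=\langle w,\nu^*\rangle=0$. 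But these conditions are not self-contradictory in general: when the set of boundary normals orthogonal to $u$ is not a single direction, or when $w$ is tangent to $\partial C$ along a flat piece, they can all be satisfied, so the ``remaining task'' is not a routine verification but precisely the heart of the proof, and your closing step is only conjectural (``should produce the contradiction''). Moreover, the auxiliary fact you invoke for it --- that two distinct parallel rays cannot both lie in $\partial C$ --- is false for general pointed cones: if $C$ has a facet, that facet contains infinitely many distinct parallel rays not through the apex (it only holds when the relevant planar section through the origin is used, or when $C$ is strictly convex), so the pointedness argument as stated does not close the case that matters.

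For comparison, the paper closes exactly this gap by a different mechanism: it separates the shifted asymptotic ray $\tfrac12 a+l_{z,u_0}$ from $E$ by a hyperplane $H(w)$, shows the separating normal must lie in $\partial\Omega_{C^\circ}$ and that $l_{z,u_0}\subset H(w)$, then moves along a geodesic of boundary normals from $w$ to a $v_0\perp u_0$ so that the corresponding support hyperplanes of $E$ meet $z+C$ only in $l_{z,u_0}$; this produces points $x_i\in E$ with $d(x_i,l_{z,u_0})\to 0$, which, via the continuity of $u\mapsto\widetilde f_{u,z}$ and a convex-hull construction inside $E$, contradicts the assumed uniform positive offset near $u_0$. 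Your normal-vector sketch would need an argument of comparable strength (in particular one valid for polyhedral and other non-strictly-convex cones) before the converse direction can be considered proved.
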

\begin{proof}
$(i)$ Without loss of generality, let $E$ be not a closed convex cone. Note that
\begin{equation*}
C=\bigcap_{v\in\partial\Omega_{C^\circ}}H^-(v,0),
\end{equation*}
we have
\begin{equation*}
\begin{aligned}
z+C&=z+\bigcap_{v\in\partial\Omega_{C^\circ}}H^-(v,0)=\bigcap_{v\in\partial\Omega_{C^\circ}}(z+H^-(v,0))\\
&=\bigcap_{v\in\partial\Omega_{C^\circ}}H^-(v,\langle z,v\rangle)=\bigcap_{v\in\partial\Omega_{C^\circ}}H^-(v,h_E(v))\triangleq C_E,
\end{aligned}
\end{equation*}
where $z+H^-(v,0)=H^-(v,\langle z,v\rangle)$ can be checked easily. Since $E\subset H^-(v,h_E(v))$ for all $v\in\partial\Omega_{C^\circ}$, one has
\begin{equation*}
E\subset C_E=z+C.
\end{equation*}
By Lemma \ref{uniformly-lemma}, we have
\begin{equation*}
f_{u,z}(|x|)-\widetilde{f}_{u,z}\rightrightarrows0,\,\text{uniformly on}\ \partial\Omega_C\ \text{as}\ |x|\rightarrow+\infty\ \text{with}  \ x\in\partial E|_u.
\end{equation*}
We claim that $\widetilde{f}_{u,z}=0$ for any $u\in\partial\Omega_C$. To do so, assume there exists $u_0\in\partial\Omega_C$ such that $\widetilde{f}_{u_0,z}>0$. Denote by $\gamma$ the angle between $u_0$ and $p_E(o)-z$, then the ray
\begin{equation*}
\widetilde{l}_{u_0}=\frac{\widetilde{f}_{u_0,z}}{|p_E(o)-z|\sin\gamma}\big(p_E(o)-z-(|p_E(o)-z|\cos\gamma)u_0\big)
+l_{z,u_0}\triangleq a+l_{z,u_0}
\end{equation*}
is an asymptotic line of $\partial E|_{u_0}$ in the $2$-dimensional plane $L=z+\text{span}\{u_0,p_E(o)-z\}$. Thus,
\begin{equation*}
\Big(\frac{1}{2}a+l_{z,u_0}\Big)\cap E=\emptyset.
\end{equation*}
By Theorem 1.3.7 in \cite{Schneider-book}, the ray $(\frac{1}{2}a+l_{z,u_0})$ and the set $E$ can be separated by a hyperplane $H(w)$ with the normal $w$, where $E\subset H^-(w)$. Clearly, $(\frac{1}{2}a+l_{z,u_0})$ is parallel to $H(w)$. If $w\in\Omega_{C^\circ}$, then $\langle u_0,w\rangle<0$. Since $\frac{1}{2}a+l_{z,u_0}\subset H^+(w)$, we have
\begin{equation*}
\frac{1}{2}\langle a,w\rangle+\lambda\langle u_0,w\rangle=\Big\langle\frac{1}{2}a+\lambda u_0,w\Big\rangle\geqslant0.
\end{equation*}
Letting $\lambda\rightarrow+\infty$, leads to the conclusion $-\infty\geqslant0$, a contradiction. Thus, we conclude that $w\in\partial\Omega_{C^\circ}$.

Next, we claim that $l_{z,u_0}\subset H(w)$. Suppose $l_{z,u_0}\nsubseteq H^-(w)$, then $l_{z,u_0}\subset H^+(w)\setminus H(w)$. Since $w\in\partial\Omega_{C^\circ}$ and $E\in H^-(w)$, it follows that $H^-(w,h_E(w))\subset H^-(w)$. This gives
\begin{equation*}
l_{z,u_0}\subset z+C=C_E=\bigcap_{v\in\partial\Omega_{C^\circ}}H^-(v,h_E(v))\subset H^-(w,h_E(w))\subset H^-(w),
\end{equation*}
which is a contradiction. If $l_{z,u_0}\subset H^-(w)\setminus H(w)$, then $\langle l_{z,u_0},w\rangle<\tau$ for some $\tau\in\mathbb{R}$. In the plane $L$, since $\frac{1}{2}a+l_{z,u_0}\subset H^+(w)\cap L$, the point $p_E(o)\notin\frac{1}{2}a+l_{z,u_0}$ lies on the other side satisfies $\langle p_E(o),w\rangle>\tau$, which contradicts the fact that $p_E(o)\in E\subset H^-(w)$. Thus, we conclude that $l_{z,u_0}\subset H(w)$.

Let $v_0\in\partial\Omega_{C^\circ}$ satisfy $\langle u_0,v_0\rangle=0$. We can choose a geodesic curve $\widehat{wv_0}$ connecting $w$ and $v_0$ on $\partial\Omega_{C^\circ}$. For any $v\in\widehat{wv_0}$, we have
\begin{equation*}
l_{z,u_0}\subset z+C=C_E\subset H^-(v,h_E(v)).
\end{equation*}
Noting that $l_{z,u_0}\subset H(v_0,h_E(v_0))$, we conclude that $H(v,h_E(v))\cap C_E=l_{z,u_0}$ for $v\in\widehat{wv_0}\setminus\{w,v_0\}$. By the properties of $H(v,h_E(v))$, there exists a sequence $\{x_i\}_{i=1}^{+\infty}\subset E$ such that
\begin{equation}\label{Contradiction-d-x-i-l-0}
d(x_i,l_{z,u_0})\rightarrow0 \ \text{as}\ i\rightarrow+\infty.
\end{equation}
Since $\widetilde{f}_{u,x_0}=d(l_{x_0,u},\partial E|_u)$ is continuous with respect to $u$, there is a small neighbourhood $U(u_0)$ of $u_0$ such that $\widetilde{f}_{u,x_0}>\delta$ for some $\delta>0$ and any $u\in U(u_0)$. Consequently, by the sufficiently small neighborhood $U(u_0)$ and $C_E\subset H^-(u_0,h_E(u_0))$, we have
\begin{equation}\label{Contradiction-partial-E-H-big-delta}
d\Big(\bigcup_{u\in U(u_0)}\partial E|_u,H(u_0,h_E(u_0))\Big)>\frac{\delta}{2}.
\end{equation}
Using the convexity of $E$ and the convergence \eqref{Contradiction-d-x-i-l-0}, we have
\begin{equation*}
\text{conv}\Big\{x_i,\bigcup_{u\in U(u_0)}\partial E|_u\Big\}\triangleq K_i\subset E\,\ \mbox{and}\,\ d(K_i,l_{z,u_0})\rightarrow0,
\ \mbox{as}\ i\rightarrow+\infty.
\end{equation*}
Note that $l_{z,u_0}\subset H(u_0,h_E(u_0))$, the above results contradict with \eqref{Contradiction-partial-E-H-big-delta}. Thus, we conclude that
\begin{equation*}
\widetilde{f}_{u,z}\equiv0.
\end{equation*}
In other words, we have $f_{u,z}(|x|)\rightrightarrows0 \,\text{uniformly on}\ \partial\Omega_C\ \text{as}\ |x|\rightarrow+\infty \, \text{with}\ x\in\partial E|_u$. Therefore, $z$ is indeed the $C$-starting point of $E$.

$(ii)$ Let $z\in C$ be the $C$-starting point of the $C$-pseudo-cone $E$, then $E\subset z+C$ and
\begin{equation}\label{starting-point-f-u-z-0}
f_{u,z}(|x|)\rightrightarrows0,\,\text{uniformly on}\ \partial\Omega_C,
\end{equation}
as $|x|\rightarrow+\infty$ with $\,x\in\partial E|_u$. Since $E\subset z+C$, one has $h_E(v)\leqslant h_{z+C}(v)=\langle z,v\rangle$ for all $v\in\partial\Omega_{C^\circ}$. Suppose that there exists a point $v_0\in\partial\Omega_{C^\circ}$ such that
\begin{equation*}
h_E(v_0)<h_{z+C}(v_0)=\langle z,v_0\rangle,
\end{equation*}
then one has
\begin{equation*}
d(l_{z,u_0},H(v_0,h_E(v_0))=\langle z,v_0\rangle-h_E(v_0)>0,
\end{equation*}
where $u_0\in\partial\Omega_C$ satisfies $u_0\perp v_0$. Since $E\subset H^-(v_0,h_E(v_0))$, for all $x\in\partial E|_{u_0}$, one has
\begin{equation*}
f_{u_0,z}(|x|)\geqslant d(l_{z,u_0},H(v_0,h_E(v_0))>0.
\end{equation*}
This contradicts with \eqref{starting-point-f-u-z-0}. Therefore, $h_E(v)=\langle z,v\rangle$ for all $v\in\partial\Omega_{C^\circ}$.
\end{proof}

\begin{lemma}\label{Lemma-equivalence-C-starting-point-and-C-asymptotic-set}
Let $o\notin E\subset C$ and $z\in C$. If $E$ is a $C$-asymptotic set, then $z+E$ is a $C$-pseudo-cone and $z$ is the $C$-starting point of $z+E$. Conversely, if $E$ is a $C$-pseudo-cone and $z$ is the $C$-starting point of $E$, then $-z+E$ is a $C$-asymptotic set.
\end{lemma}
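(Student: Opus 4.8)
The plan is to derive the statement directly from the structural results already in place — Lemma~\ref{C-asymptotic-set}, Lemma~\ref{uniformly-lemma}, and especially Lemma~\ref{Lemma-equivalence-support-function-and-C-starting-point} — so that both implications reduce to checking that translating by $\pm z$ preserves the pseudo-cone axioms and transforms the support function (resp.\ the asymptotic condition) in the obvious way.

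\emph{Forward implication.} Suppose $E$ is a $C$-asymptotic set. By Lemma~\ref{C-asymptotic-set} it is already a $C$-pseudo-cone, so $E\subset{\rm rec}\,E=C$. First I would verify that $z+E$ is a $C$-pseudo-cone: it is closed and convex; $o\notin z+E$, since $o\in z+E$ would give $-z\in E\subset C$, hence $z\in C\cap(-C)=\{o\}$ by pointedness, and then $z+E=E\not\ni o$, a contradiction; the pseudo-cone property holds because for $w=z+e\in z+E$ and $\lambda\geqslant1$ one has $\lambda w-z=(\lambda-1)z+\lambda e\in E$ (using $\lambda e\in E$, $(\lambda-1)z\in C$, and $E+C\subset E$ from ${\rm rec}\,E=C$); and ${\rm rec}\,(z+E)={\rm rec}\,E=C$. (Alternatively, for $z\neq o$ one may write $z+E=(z+C)+E$ as a Minkowski sum of two $C$-pseudo-cones and invoke Lemma~3.9 of \cite{Semenov-Zhao}.) It then remains to identify $z$ as the $C$-starting point of $z+E$; by Lemma~\ref{Lemma-equivalence-support-function-and-C-starting-point} this is equivalent to $h_{z+E}(v)=\langle z,v\rangle$ for all $v\in\partial\Omega_{C^\circ}$, which follows at once from $h_{z+E}(v)=h_E(v)+\langle z,v\rangle$ together with $h_E(v)=0$ on $\partial\Omega_{C^\circ}$ (a consequence of \eqref{limin-formula-C-asymptotic-set}, as used in the proof of Lemma~\ref{C-asymptotic-set}).

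\emph{Reverse implication.} Suppose $E$ is a $C$-pseudo-cone with $C$-starting point $z$, and set $\mathbb{A}:=-z+E$. By Definition~\ref{Definition-C-starting-point} one has $E\subset z+C$, so $\mathbb{A}\subset C={\rm rec}\,\mathbb{A}$; thus $\mathbb{A}$ is a pseudo-cone, and it is closed, convex and unbounded (its recession cone is $n$-dimensional). Moreover $o\notin\mathbb{A}$: if $z\in E$ then, $E$ being a $C$-compatible set, $z+C={\rm conv}(\{z\},C)\subset E\subset z+C$, forcing $E=z+C$, the degenerate case we set aside. For the asymptotic property, a point of $\partial\mathbb{A}$ has the form $-z+y$ with $y\in\partial E$, and after translating, $d(-z+y,\partial C)=d(y,\partial(z+C))$; I would then use the boundary partition $\partial E=\bigcup_u\partial E|_u$ (taken with $x_0=z$), choose $u\in\partial C\cap\mathbb{S}^{n-1}$ with $y\in\partial E|_u$, and, noting $l_{z,u}\subset\partial(z+C)$, estimate
\[
d(y,\partial(z+C))\leqslant d(y,l_{z,u})=f_{u,z}(|y|)\leqslant\sup_{v\in\partial\Omega_C}f_{v,z}(|y|).
\]
By Definition~\ref{Definition-C-starting-point} the right-hand side tends to $0$ as $|y|\to+\infty$, equivalently as $x=-z+y$ runs over $\partial\mathbb{A}$ with $|x|\to+\infty$; hence $d(x,\partial C)\to0$ and $\mathbb{A}$ is a $C$-asymptotic set.

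\emph{Main obstacle.} There is essentially no analytic difficulty left at this point — the real work was already absorbed into Lemmas~\ref{uniformly-lemma} and~\ref{Lemma-equivalence-support-function-and-C-starting-point}. The steps requiring care are purely bookkeeping: confirming $o\notin z+E$ and $o\notin -z+E$ (using pointedness of $C$ and that $z\in E$ forces $E=z+C$), and aligning the index sets in the boundary decompositions $\partial E=\bigcup_u\partial E|_u$ and $\partial(z+C)=\bigcup_u l_{z,u}$ so that the uniform decay of $f_{v,z}$ on $\partial\Omega_C$ transfers to the uniform decay of $d(\cdot,\partial C)$ along $\partial\mathbb{A}$.
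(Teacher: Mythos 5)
Your proof is correct, and your reverse implication is essentially the paper's own argument: both you and the paper start from Definition~\ref{Definition-C-starting-point}, pick for each far-out boundary point $y\in\partial E$ a direction $u$ with $y\in\partial E|_u$, and pass from the uniform decay $f_{u,z}(|y|)=d(y,l_{z,u})\to 0$ to $d(-z+y,\partial C)=d(y,z+\partial C)\leqslant d(y,l_{z,u})\to 0$. The forward implication is where you genuinely diverge. The paper shows $z+E$ is a $C$-pseudo-cone by combining the half-space representation of Lemma~\ref{C-asymptotic-set} with Definition~\ref{Def-C-closed-convex-hull} (so that $\mathrm{conv}(z+E,C)=z+E$), and then identifies $z$ as the $C$-starting point \emph{directly from the definition}: it applies Lemma~\ref{uniformly-lemma} to obtain $f_{u,z}-\widetilde f_{u,z}\rightrightarrows 0$ and rules out $\widetilde f_{u',z}>0$ by a contradiction with the asymptotic relation \eqref{limin-formula-C-asymptotic-set}. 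You instead verify the pseudo-cone axioms for $z+E$ by hand (or via the Minkowski-sum closure from \cite{Semenov-Zhao}) and then shortcut the starting-point verification through Lemma~\ref{Lemma-equivalence-support-function-and-C-starting-point}, using $h_{z+E}=h_E+\langle z,\cdot\rangle$ and $h_E\equiv 0$ on $\partial\Omega_{C^\circ}$. This is legitimate and non-circular (Lemma~\ref{Lemma-equivalence-support-function-and-C-starting-point} precedes the present lemma and does not rely on it), and it buys a much shorter forward step; the cost is that you lean on the fact $h_E|_{\partial\Omega_{C^\circ}}=0$ for $C$-asymptotic sets, which the paper itself asserts without detailed proof (in the proof of Lemma~\ref{C-asymptotic-set} and again in Theorem~\ref{Theorem-the-characteristic-of-C-pseudo-cone-extension}), so you are on the same footing there.

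One further remark: your bookkeeping about $o\notin -z+E$ touches a boundary case the paper silently ignores. If $E=z+C$ with $z\in C\setminus\{o\}$, then $E$ is a $C$-pseudo-cone whose $C$-starting point is $z$, yet $-z+E=C$ contains $o$ and so is not a $C$-asymptotic set in the strict sense of the definition; this case is not excluded by the hypotheses, so ``setting it aside'' is a gloss — but it is exactly the same gloss the paper makes, not a defect specific to your argument.
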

\begin{proof}
$(i)$ Suppose that $E$ is a $C$-asymptotic set and $z\in C$. By Lemma \ref{C-asymptotic-set}, we have
\begin{equation*}
\begin{aligned}
z+E&=z+\bigcap_{u\in\Omega_{C^\circ}}H^-(u,h_E(u))\\
&=\bigcap_{u\in\Omega_{C^\circ}}\big(z+H^-(u,h_E(u))\big)=\bigcap_{u\in\Omega_{C^\circ}}H^-(u,h_{z+E}(u)).
\end{aligned}
\end{equation*}
Since $z+E\subset C$ is an unbounded closed convex set, by Definition \ref{Def-C-closed-convex-hull}, one has
\begin{equation*}
\begin{aligned}
\text{conv}(z+E,C)&=C\cap\bigcap_{u\in\Omega_{C^\circ}}\{H_u^-|\,z+E\subset H_u^-\}
=C\cap\bigcap_{u\in\Omega_{C^\circ}}H^-(u,h_{z+E}(u))\\
&=C\cap(z+E)=z+E\subset\text{conv}(z+E,C),
\end{aligned}
\end{equation*}
which shows $z+E=\text{conv}(z+E,C)$. Therefore, $z+E$ is a $C$-compatible set, i.e., $z+E$ is a $C$-pseudo-cone. Since $E$ is a $C$-asymptotic set, one has
\begin{equation}\label{asymptotic-z+x-z+partial-C}
\lim_{x\in\partial E,|x|\rightarrow+\infty}d(z+x,z+\partial C)=0.
\end{equation}
By Lemma \ref{uniformly-lemma}, there holds $f_{u,z}(|x|)-\widetilde{f}_{u,z}\rightrightarrows0\ \mbox{uniformly on}\ \partial\Omega_C,\ \mbox{as}\ |x|\rightarrow+\infty$. Suppose that $\widetilde{f}_{u',z}>0$ for some $u'\in\partial\Omega_C$, then there is a neighborhood $U(u')\subset\partial\Omega_C$ of $u'$ such that $\widetilde{f}_{u,z}>\delta$ for all $u\in U(u')$ and some $\delta>0$ from the continuity of the distance function $\widetilde{f}_{u,z}$. Therefore, one has
\begin{equation*}
d\Big(\bigcup_{u\in U(u')}\partial(z+C)|_u,\bigcup_{u\in U(u')}\partial E|_u\Big)\geqslant\delta.
\end{equation*}
This contradicts with \eqref{asymptotic-z+x-z+partial-C}. Thus, $f_{u,z}(|x|)\rightrightarrows0\ \mbox{uniformly on}\ \partial\Omega_C,\ \mbox{as}\ |x|\rightarrow+\infty$, i.e., $z$ is the $C$-starting point of $z+E$.

$(ii)$ Let $E$ be a $C$-pseudo-cone and $z$ be the $C$-starting point of $E$. By Definition \ref{Definition-C-starting-point}, we have $E\subset z+C$, i.e., $-z+E\subset C$ and
\begin{equation*}
f_{u,z}(|x|)=d(x,l_{z,u})\rightrightarrows0 \,\text{uniformly on}\ \partial\Omega_C,
\end{equation*}
$\text{as}\ |x|\rightarrow+\infty \ \text{with}\ \,x\in\partial E|_u$. Thus, for any given $\varepsilon>0$, there exists $N>0$ such that if $|x|>N$ and $x\in\partial E|_u$, then $d(x,l_{z,u})<\varepsilon$ for all $u\in\partial\Omega_C$. Hence, if $|x|>N$ and $x\in\partial E$, there exists some $\bar{u}\in\partial\Omega_C$ such that
\begin{equation*}
d(x,z+\partial C)\leqslant d(x,l_{z,\bar{u}})<\varepsilon.
\end{equation*}
This induces
\begin{equation*}
\lim_{\substack{|x|\rightarrow+\infty\\x\in\partial(-z+E)}}d(x,\partial C)=\lim_{\substack{|x|\rightarrow+\infty\\x\in\partial E}}d(-z+x,\partial C)=\lim_{\substack{|x|\rightarrow+\infty\\x\in\partial E}}d(x,z+\partial C)=0,
\end{equation*}
then $-z+E$ is a $C$-asymptotic set.
\end{proof}

Here we gave the following results as an expansion to Theorem \ref{Theorem-the-characteristic-of-C-pseudo-cone}.
\begin{theorem}\label{Theorem-the-characteristic-of-C-pseudo-cone-extension}
Let $C$ be a pointed closed convex cone in $\mathbb{R}^n$ with non-empty interior and $o\notin E\subset\mathbb{R}^n$ be a $C$-pseudo-cone, then the following three statements are equivalent:\\
$(i)$ There is a point $z\in C$ such that $h_E(v)=\langle z,v\rangle$ for all $v\in\partial\Omega_{C^\circ}$;\\
$(ii)$ There is a point $z\in C$ such that $z$ is the $C$-starting point of $E$;\\
$(iii)$ $E$ can be uniquely decomposed into $E=z+\mathbb{A}$, where $\mathbb{A}$ is a $C$-asymptotic set and $z\in C$.
\end{theorem}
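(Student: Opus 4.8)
The plan is to derive the theorem as a direct assembly of the preceding results, proving the two equivalences (i)$\Leftrightarrow$(ii) and (ii)$\Leftrightarrow$(iii) and then checking the uniqueness asserted in (iii). Almost all of the geometric content has already been isolated in Lemmas \ref{C-asymptotic-set}--\ref{Lemma-equivalence-C-starting-point-and-C-asymptotic-set}, so the work here is mostly organizational. The equivalence (i)$\Leftrightarrow$(ii) is literally the statement of Lemma \ref{Lemma-equivalence-support-function-and-C-starting-point}: a $C$-pseudo-cone $E$ satisfies $h_E(v)=\langle z,v\rangle$ for all $v\in\partial\Omega_{C^\circ}$ if and only if $z$ is the $C$-starting point of $E$. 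I would simply cite it.

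For (ii)$\Rightarrow$(iii) I would take $z\in C$ to be the $C$-starting point of $E$ and set $\mathbb{A}:=-z+E$. By the second half of Lemma \ref{Lemma-equivalence-C-starting-point-and-C-asymptotic-set}, $\mathbb{A}$ is a $C$-asymptotic set, and $E=z+\mathbb{A}$ with $z\in C$ is the required decomposition. Conversely, for (iii)$\Rightarrow$(ii), if $E=z+\mathbb{A}$ with $z\in C$ and $\mathbb{A}$ a $C$-asymptotic set, the first half of Lemma \ref{Lemma-equivalence-C-starting-point-and-C-asymptotic-set}, applied to $\mathbb{A}$, states exactly that $z$ is the $C$-starting point of $z+\mathbb{A}=E$. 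Hence (ii) and (iii) are equivalent modulo the uniqueness claim.

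The uniqueness of the decomposition in (iii) is the one point that still needs a short argument, and it is where I would be most careful. Suppose $E=z_1+\mathbb{A}_1=z_2+\mathbb{A}_2$ with $z_i\in C$ and each $\mathbb{A}_i$ a $C$-asymptotic set. By the implication just established, both $z_1$ and $z_2$ are $C$-starting points of $E$; since the $C$-starting point of a $C$-pseudo-cone is unique (the Remark following Definition \ref{Definition-C-starting-point}), we get $z_1=z_2$, and therefore $\mathbb{A}_1=-z_1+E=-z_2+E=\mathbb{A}_2$. Collecting the implications yields (i)$\Leftrightarrow$(ii)$\Leftrightarrow$(iii) together with uniqueness, and Theorem \ref{Theorem-the-characteristic-of-C-pseudo-cone} is recovered by simply discarding statement (ii).

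I do not expect a substantial obstacle in this proof: the genuinely hard steps — that the limiting boundary profile $\widetilde{f}_{u,z}$ of $E$ vanishes identically once $E$ has the prescribed support-function behaviour, and that a translate of a $C$-asymptotic set is again a $C$-pseudo-cone with the expected starting point — were already handled in Lemmas \ref{uniformly-lemma}, \ref{Lemma-equivalence-support-function-and-C-starting-point}, and \ref{Lemma-equivalence-C-starting-point-and-C-asymptotic-set}. What remains is bookkeeping, the only delicate point being to invoke the uniqueness of the $C$-starting point in exactly the right place so that the decomposition $E=z+\mathbb{A}$ is genuinely unique rather than merely existent.
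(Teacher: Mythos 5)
Your proof is correct, and its overall skeleton coincides with the paper's: both establish $(i)\Leftrightarrow(ii)$ by citing Lemma \ref{Lemma-equivalence-support-function-and-C-starting-point} and obtain the decomposition in $(iii)$ from Lemma \ref{Lemma-equivalence-C-starting-point-and-C-asymptotic-set}. The two places where you genuinely diverge are the uniqueness claim and the closing implication. For uniqueness, the paper argues directly from the definition of a $C$-asymptotic set: writing $E=\mathbb{A}_1+z_1=\mathbb{A}_2+z_2$ gives $\mathbb{A}_2=\mathbb{A}_1+(z_1-z_2)$, so $\partial\mathbb{A}_2$ would be asymptotic to the translated boundary $(z_1-z_2)+\partial C$, contradicting \eqref{limin-formula-C-asymptotic-set} unless $z_1=z_2$; you instead note that both $z_1$ and $z_2$ are $C$-starting points of $E$ (via the first half of Lemma \ref{Lemma-equivalence-C-starting-point-and-C-asymptotic-set}) and invoke the uniqueness of the $C$-starting point from the Remark after Definition \ref{Definition-C-starting-point}. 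That is legitimate since the Remark is part of the paper, though be aware its stated one-direction argument really needs two independent directions in $\partial\Omega_C$ to pin down $p_1=p_2$ (one direction only forces $p_1-p_2$ parallel to it), so your route quietly outsources a small argument that the paper's direct contradiction handles on the spot. For closing the cycle, the paper proves $(iii)\Rightarrow(i)$ by showing $h_{\mathbb{A}}\equiv0$ on $\partial\Omega_{C^\circ}$ for any $C$-asymptotic set $\mathbb{A}$ and then computing $h_E(v)=\langle z,v\rangle$, whereas you prove $(iii)\Rightarrow(ii)$ directly from the first half of Lemma \ref{Lemma-equivalence-C-starting-point-and-C-asymptotic-set}; your version is slightly more economical, the paper's has the side benefit of recording explicitly that $C$-asymptotic sets have vanishing support function on $\partial\Omega_{C^\circ}$ (the content of Corollary \ref{C-asymptotic-set-C-starting-point-support-function}). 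Either way the equivalences and the uniqueness are fully established, and Theorem \ref{Theorem-the-characteristic-of-C-pseudo-cone} follows by dropping statement $(ii)$, exactly as you say.
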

\begin{proof}
Lemma \ref{Lemma-equivalence-support-function-and-C-starting-point} shows that $(i)$ is equivalent to $(ii)$. Now, we assume that $E$ satisfies $(i)$ or $(ii)$. By Lemma \ref{Lemma-equivalence-C-starting-point-and-C-asymptotic-set}, $\mathbb{A}=-z+E$ is a $C$-asymptotic set. Thus, $E$ can be decomposed into
\begin{equation*}
E=\mathbb{A}+z.
\end{equation*}
Suppose that there are two $C$-asymptotic set $\mathbb{A}_1,\mathbb{A}_2$ and $z_1,z_2\in C$ such that
\begin{equation*}
E=\mathbb{A}_1+z_1=\mathbb{A}_2+z_2,
\end{equation*}
then $\mathbb{A}_1+(z_1-z_2)=\mathbb{A}_2$. Since $\mathbb{A}_1$ is a $C$-asymptotic set, one has
\begin{equation*}
\lim_{x\in\partial\mathbb{A}_1,|x|\rightarrow+\infty}d(x,\partial C)=0,
\end{equation*}
which induces
\begin{equation*}
\lim_{x\in\partial\mathbb{A}_2,|x|\rightarrow+\infty}d(x,(z_1-z_2)+\partial C)=0.
\end{equation*}
If $z_1-z_2\neq o$, then the above formula contradicts with the fact that $\mathbb{A}_2$ is a $C$-asymptotic set. Thus, there are $z_1=z_2$ and $\mathbb{A}_1=\mathbb{A}_2$. This shows $(iii)$ holds.

Conversely, if $(iii)$ holds, then $h_E(v)=h_{z+\mathbb{A}}(v)=h_{\mathbb{A}}(v)+\langle z,v\rangle$ for all $v\in\partial\Omega_{C^\circ}$. For $C$-asymptotic set $\mathbb{A}$, we claim that $h_{\mathbb{A}}(v)=0$ for all $v\in\partial\Omega_{C^\circ}$. Otherwise, if $h_{\mathbb{A}}(v')<0$ for some $v'\in\partial\Omega_{C^\circ}$, then
\begin{equation*}
\mathbb{A}\subset C\cap H^-(v',h_{\mathbb{A}}(v')).
\end{equation*}
Note that $C=[C\cap H^-(v',h_{\mathbb{A}}(v'))]\cup[C\cap H^+(v',h_{\mathbb{A}}(v'))]$, so the above formula contradicts with the formula \ref{limin-formula-C-asymptotic-set}. Thus, $h_E(v)=\langle z,v\rangle$ for all $v\in\partial\Omega_{C^\circ}$, i.e., $(i)$ holds.
\end{proof}

As a special case, we have the following corollary.
\begin{corollary}\label{C-asymptotic-set-C-starting-point-support-function}
Let $E$ be a $C$-pseudo-cone. Then the following statements are equivalent: \\
$(i)$ $E$ is a $C$-asymptotic set;\\
$(ii)$ The origin $o$ is the $C$-starting point of $E$;\\
$(iii)$ $h_E(v)=0$ for all $v\in\partial\Omega_{C^\circ}$.
\end{corollary}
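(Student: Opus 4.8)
The plan is to read this corollary off the machinery already assembled by specializing the free point to $z=o\in C$ (recall $o\in C$ since $C$ is a cone, and $o\notin E\subset C$ holds for every $C$-pseudo-cone). The three equivalences then reduce to substituting $z=o$ into the two auxiliary lemmas, using the trivial identity $\langle o,v\rangle=0$.

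Concretely, I would argue cyclically. For $(ii)\Leftrightarrow(iii)$: apply Lemma \ref{Lemma-equivalence-support-function-and-C-starting-point} with $z=o$; since $\langle o,v\rangle=0$ for all $v\in\partial\Omega_{C^\circ}$, that lemma states exactly that $h_E(v)=0$ on $\partial\Omega_{C^\circ}$ if and only if $o$ is the $C$-starting point of $E$. For $(i)\Rightarrow(ii)$: if $E$ is a $C$-asymptotic set, the first half of Lemma \ref{Lemma-equivalence-C-starting-point-and-C-asymptotic-set} with $z=o$ gives that $o+E=E$ is a $C$-pseudo-cone (already known) and that $o$ is its $C$-starting point. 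For $(ii)\Rightarrow(i)$: if $o$ is the $C$-starting point of $E$, the second half of the same lemma with $z=o$ gives that $-o+E=E$ is a $C$-asymptotic set. This closes the loop $(i)\Rightarrow(ii)\Rightarrow(i)$ together with $(ii)\Leftrightarrow(iii)$, so all three statements are equivalent.

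An equivalent, even shorter route is to invoke Theorem \ref{Theorem-the-characteristic-of-C-pseudo-cone-extension} directly: its conditions $(i)$, $(ii)$, $(iii)$ with the point $z$ forced to be $o$ become precisely conditions $(iii)$, $(ii)$, $(i)$ of the corollary. Forcing $z=o$ is legitimate because the $C$-starting point is unique (the remark after Definition \ref{Definition-C-starting-point}) and the decomposition in Theorem \ref{Theorem-the-characteristic-of-C-pseudo-cone-extension}$(iii)$ is unique, so whenever $h_E\equiv 0$ on $\partial\Omega_{C^\circ}$ the $z$ supplied by the theorem must coincide with $o$. Since the whole argument is substitution into previously proved statements, there is no genuine obstacle; the only point deserving a line of care is observing that $\langle o,v\rangle=0$ is what lets the general "there exists $z\in C$" assertions collapse onto the specific point $o$.
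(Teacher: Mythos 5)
Your proof is correct and matches the paper's treatment: the paper presents this corollary precisely as the special case $z=o$ of Theorem \ref{Theorem-the-characteristic-of-C-pseudo-cone-extension}, whose proof rests on the same two lemmas you specialize (Lemma \ref{Lemma-equivalence-support-function-and-C-starting-point} for $(ii)\Leftrightarrow(iii)$ and Lemma \ref{Lemma-equivalence-C-starting-point-and-C-asymptotic-set} for $(i)\Leftrightarrow(ii)$). Your remark that $\langle o,v\rangle=0$ and the uniqueness of the $C$-starting point force the general point $z$ to collapse to $o$ is exactly the right (and only) point of care.
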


\begin{remark}
Recently, the equivalence between $(i)$ and $(iii)$ was also established by Semenov and Zhao \cite{Semenov-Zhao}.
\end{remark}

\begin{definition}
Let $E$ be a $C$-pseudo-cone. If $E$ satisfies $(i)$ or $(ii)$, $(iii)$ in Theorem \ref{Theorem-the-characteristic-of-C-pseudo-cone-extension}, then $E$ is called a non-degenerated $C$-pseudo-cone; If $E$ does not satisfy $(i)$ in Theorem \ref{Theorem-the-characteristic-of-C-pseudo-cone-extension}, then $E$ is called a degenerated $C$-pseudo-cone.
\end{definition}

\begin{remark}
From the perspective of the $C$-starting point, a non-degenerated $C$-pseudo-cone has its $C$-starting point, but a degenerated $C$-pseudo-cone does have the $C$-starting point. In the plane $\mathbb{R}^2$, each $C$-pseudo-cone all are non-degenerated. However, there are many degenerated $C$-pseudo-cones in high dimension. For example, please see Figure \ref{Fig:degenerated-pseudo-cone}.
\end{remark}

 \begin{figure}[htbp]
 \centering
\begin{minipage}{.6\textwidth}
  \centering
  \includegraphics[width=8cm]{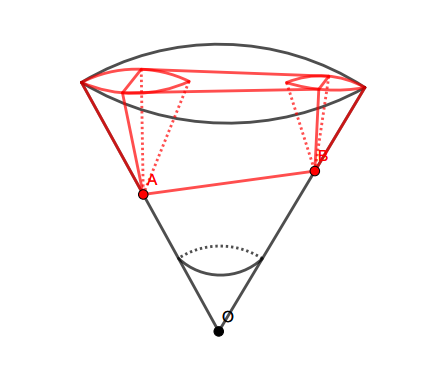}
  \caption{The degenerated $C$-pseudo-cone in $\mathbb{R}^3$}
  \label{Fig:degenerated-pseudo-cone}
\end{minipage}
\end{figure}

This is an example of the degenerated $C$-pseudo-cone in $\mathbb{R}^3$ and was provided by R. Schneider. In which, we place two points $A$ and $B$ on the boundary of the circular cone $C$ whose vertex is $O$. Then the highlighted domain (in red) swept by the motion from $A+C$ to $B+C$ is a pseudo-cone, but it cannot be decomposed as Theorem \ref{Theorem-the-characteristic-of-C-pseudo-cone}.

\begin{lemma}\label{Minkowski-sum-of-C-asymptotic-set}
Let $\mathbb{A}_1,\mathbb{A}_2$ be two $C$-asymptotic sets. Then the Minkowski sum $\mathbb{A}_1+\mathbb{A}_2$ is also a $C$-asymptotic set.
\end{lemma}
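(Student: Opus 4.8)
The plan is to deduce the statement from the support-function characterisation of $C$-asymptotic sets in Corollary \ref{C-asymptotic-set-C-starting-point-support-function}, rather than trying to control the distance condition \eqref{limin-formula-C-asymptotic-set} directly on the (awkward) boundary of a Minkowski sum. First I would record two facts about the summands: by Lemma \ref{C-asymptotic-set} each $\mathbb{A}_i$ is a $C$-pseudo-cone, and by Corollary \ref{C-asymptotic-set-C-starting-point-support-function} one has $h_{\mathbb{A}_i}(v)=0$ for every $v\in\partial\Omega_{C^\circ}$, $i=1,2$.

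The second step is to verify that $\mathbb{A}_1+\mathbb{A}_2$ is itself a $C$-pseudo-cone. It is convex and contained in $C=C+C$; it avoids $o$, since $a_1+a_2=o$ with $a_i\in C$ forces $a_1=-a_2\in C\cap(-C)=\{o\}$ (as $C$ is pointed), contradicting $o\notin\mathbb{A}_1$; it is closed because ${\rm rec}\,\mathbb{A}_1\cap(-{\rm rm\,rec}\,\mathbb{A}_2)=C\cap(-C)=\{o\}$, which is the standard criterion guaranteeing closedness of a sum of closed convex sets (see, e.g., \cite{Schneider-book}), and consequently ${\rm rec}\,(\mathbb{A}_1+\mathbb{A}_2)={\rm rec}\,\mathbb{A}_1+{\rm rec}\,\mathbb{A}_2=C$; finally it is a pseudo-cone, since for $x=a_1+a_2$ and $\lambda\geqslant1$ we have $\lambda x=\lambda a_1+\lambda a_2\in\mathbb{A}_1+\mathbb{A}_2$ by the pseudo-cone property of the $\mathbb{A}_i$. (One may alternatively just invoke Lemma 3.9 of \cite{Semenov-Zhao}.)

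The third step computes the support function of $\mathbb{A}_1+\mathbb{A}_2$ on $\partial\Omega_{C^\circ}$. Directly from the definition as a supremum,
\[
h_{\mathbb{A}_1+\mathbb{A}_2}(v)=\sup_{a_1\in\mathbb{A}_1,\,a_2\in\mathbb{A}_2}\langle a_1+a_2,v\rangle=h_{\mathbb{A}_1}(v)+h_{\mathbb{A}_2}(v)=0,\qquad v\in\partial\Omega_{C^\circ},
\]
the middle equality being the decoupling of a supremum over a product set and the last one coming from Step~1. The only subtlety is that on $\partial\Omega_{C^\circ}$ these suprema need not be attained; nevertheless the identity is valid for suprema, and the value $0$ can be checked independently by combining $\langle a_1+a_2,v\rangle\leqslant0$ (valid since $\mathbb{A}_1+\mathbb{A}_2\subset C$ and $v\in C^\circ$) with near-maximisers $a_i^{(k)}\in\mathbb{A}_i$, $\langle a_i^{(k)},v\rangle\to0$, whose sums lie in $\mathbb{A}_1+\mathbb{A}_2$.

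Finally, $\mathbb{A}_1+\mathbb{A}_2$ is a $C$-pseudo-cone whose support function vanishes on all of $\partial\Omega_{C^\circ}$, so the implication $(iii)\Rightarrow(i)$ of Corollary \ref{C-asymptotic-set-C-starting-point-support-function} (equivalently, the origin being its $C$-starting point) shows that $\mathbb{A}_1+\mathbb{A}_2$ is a $C$-asymptotic set, which completes the proof. The main — and essentially only non-routine — obstacle is the closedness of the Minkowski sum together with the handling of the (generally non-attained) support function on the boundary $\partial\Omega_{C^\circ}$; both are resolved by the line-freeness of $C$.
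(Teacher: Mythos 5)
Your proof is correct, and it reaches the conclusion by a genuinely different middle step than the paper. Both arguments end the same way: show $h_{\mathbb{A}_1+\mathbb{A}_2}=h_{\mathbb{A}_1}+h_{\mathbb{A}_2}=0$ on $\partial\Omega_{C^\circ}$ and invoke the implication $(iii)\Rightarrow(i)$ of Corollary \ref{C-asymptotic-set-C-starting-point-support-function} (which appears before this lemma, so there is no circularity; the paper uses it too). The difference lies in how one certifies that $\mathbb{A}_1+\mathbb{A}_2$ is a $C$-pseudo-cone. The paper works through the half-space representation of Lemma \ref{C-asymptotic-set}: it writes the sum as $\bigcap_{v\in\Omega_{C^\circ}\cup\partial\Omega_{C^\circ}}H^-(v,h_{\mathbb{A}_1+\mathbb{A}_2}(v))$, observes that the boundary normals contribute exactly $C$ because the support functions vanish there, and concludes via Definition \ref{Def-C-closed-convex-hull} that the sum is a $C$-compatible set; notably, it asserts closedness of $\mathbb{A}_1+\mathbb{A}_2$ without comment. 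You instead verify the pseudo-cone definition directly: $o\notin\mathbb{A}_1+\mathbb{A}_2$ from pointedness of $C$, closedness and ${\rm rec}\,(\mathbb{A}_1+\mathbb{A}_2)={\rm rec}\,\mathbb{A}_1+{\rm rec}\,\mathbb{A}_2=C$ from the standard criterion ${\rm rec}\,\mathbb{A}_1\cap(-{\rm rec}\,\mathbb{A}_2)=\{o\}$, and the scaling property $\lambda x\in\mathbb{A}_1+\mathbb{A}_2$ for $\lambda\geqslant1$ termwise. Your route is more elementary (no appeal to the ${\rm conv}(\cdot,C)$ machinery) and is actually more careful on the one delicate analytic point, the closedness of the Minkowski sum, which the paper's proof takes for granted; the paper's route, in exchange, stays entirely within its own $C$-compatible-set formalism and needs no external closedness criterion beyond the supporting half-space representation. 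Your remarks on the non-attainment of the supremum on $\partial\Omega_{C^\circ}$ and the optional shortcut via Lemma 3.9 of \cite{Semenov-Zhao} are both sound (the latter is quoted in the paper as Lemma \ref{Minkowski-sum-of-C-pseudo-cone} from an external source, so using it is legitimate).
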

\begin{proof}
By Lemma \ref{C-asymptotic-set}, we have
\begin{equation*}
\mathbb{A}_1=\bigcap_{v\in\Omega_{C^\circ}}H^-(v,h_{\mathbb{A}_1}(v)),\,\mathbb{A}_2=\bigcap_{v\in\Omega_{C^\circ}}
H^-(v,h_{\mathbb{A}_2}(v)).
\end{equation*}
Clearly, $\mathbb{A}_1$ and $\mathbb{A}_2$ are unbounded closed convex sets contained in $C$, so the Minkowski sum $\mathbb{A}_1+\mathbb{A}_2$ is also an unbounded closed convex sets in $C$. For any $v\in\mathbb{S}^{n-1}\setminus C^\circ$, we have
\begin{equation*}
\sup\{\langle x+y,v\rangle\,|\,x\in\mathbb{A}_1,y\in\mathbb{A}_2\}
=\sup\{\langle x,v\rangle\,|\,x\in\mathbb{A}_1\}+\sup\{\langle y,v\rangle\,|\,y\in\mathbb{A}_2\}=+\infty.
\end{equation*}
Hence, according to the closeness and convexity of $\mathbb{A}_1+\mathbb{A}_2$, it follows that
\begin{equation}\label{Minkowski-sum-of-C-asymptotic-set-formula-1}
\mathbb{A}_1+\mathbb{A}_2=\bigcap_{v\in\Omega_{C^\circ}\cup\partial\Omega_{C^\circ}}H^-(v,h_{\mathbb{A}_1+\mathbb{A}_2}(v)).
\end{equation}
By Corollary \ref{C-asymptotic-set-C-starting-point-support-function}, we have $h_{\mathbb{A}_1+\mathbb{A}_2}(v)=h_{\mathbb{A}_1}(v)+h_{\mathbb{A}_2}(v)=0$ for $v\in\partial\Omega_{C^\circ}$. Thus,
\begin{equation}\label{Minkowski-sum-of-C-asymptotic-set-formula-2}
\bigcap_{v\in\partial\Omega_{C^\circ}}H^-(v,h_{\mathbb{A}_1+\mathbb{A}_2}(v))=\bigcap_{v\in\partial\Omega_{C^\circ}}H^-(v,0)=C.
\end{equation}
Combining \eqref{Minkowski-sum-of-C-asymptotic-set-formula-1} with \eqref{Minkowski-sum-of-C-asymptotic-set-formula-2}, one has
\begin{equation*}
\begin{aligned}
\mathbb{A}_1+\mathbb{A}_2&=\Bigg(\bigcap_{v\in\Omega_{C^\circ}}H^-(v,h_{\mathbb{A}_1+\mathbb{A}_2}(v))\Bigg)\cap\Bigg(\bigcap_{v\in
\partial\Omega_{C^\circ}}H^-(v,0)\Bigg)\\
&=C\cap\Bigg(\bigcap_{v\in\Omega_{C^\circ}}H^-(v,h_{\mathbb{A}_1+\mathbb{A}_2}(v))\Bigg),
\end{aligned}
\end{equation*}
which implies that $\text{conv}(\mathbb{A}_1+\mathbb{A}_2,C)\subset\mathbb{A}_1+\mathbb{A}_2$ by Definition \ref{Def-C-closed-convex-hull}. Therefore, $\mathbb{A}_1+\mathbb{A}_2$ is a $C$-compatible set. Since $h_{\mathbb{A}_1+\mathbb{A}_2}(v)=0$ for all $v\in\partial\Omega_{C^\circ}$, we conclude that $\mathbb{A}_1+\mathbb{A}_2$ is a $C$-asymptotic set by Corollary \ref{C-asymptotic-set-C-starting-point-support-function}.
\end{proof}
\begin{corollary}\label{Minkowski-sum-of-non-degenerated-C-pseudo-cone}
The Minkowski sum $E+F$ of two non-degenerated $C$-pseudo-cones $E$ and $F$ is still a non-degenerated $C$-pseudo-cone.
\end{corollary}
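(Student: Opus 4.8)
The plan is to reduce the statement to the asymptotic decomposition Theorem \ref{Theorem-the-characteristic-of-C-pseudo-cone-extension} together with the additivity of $C$-asymptotic sets (Lemma \ref{Minkowski-sum-of-C-asymptotic-set}). Since $E$ and $F$ are non-degenerated $C$-pseudo-cones, statement $(iii)$ of Theorem \ref{Theorem-the-characteristic-of-C-pseudo-cone-extension} provides unique decompositions $E=z_1+\mathbb{A}_1$ and $F=z_2+\mathbb{A}_2$, where $\mathbb{A}_1,\mathbb{A}_2$ are $C$-asymptotic sets and $z_1,z_2\in C$. By commutativity and associativity of Minkowski addition together with vector translation,
\[
E+F=(z_1+\mathbb{A}_1)+(z_2+\mathbb{A}_2)=(z_1+z_2)+(\mathbb{A}_1+\mathbb{A}_2).
\]

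Set $z:=z_1+z_2$ and $\mathbb{A}:=\mathbb{A}_1+\mathbb{A}_2$. First I would observe that $z\in C$, which is immediate since $C$ is a convex cone and hence closed under addition. Next, Lemma \ref{Minkowski-sum-of-C-asymptotic-set} gives that $\mathbb{A}$ is again a $C$-asymptotic set. Applying part $(i)$ of Lemma \ref{Lemma-equivalence-C-starting-point-and-C-asymptotic-set} to the pair $(\mathbb{A},z)$ then yields that $z+\mathbb{A}=E+F$ is a $C$-pseudo-cone and that $z$ is its $C$-starting point. In particular $E+F$ admits a $C$-starting point, which is exactly statement $(ii)$ of Theorem \ref{Theorem-the-characteristic-of-C-pseudo-cone-extension}; hence $E+F$ is a non-degenerated $C$-pseudo-cone. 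The uniqueness of the decomposition $E+F=z+\mathbb{A}$ is already part of Theorem \ref{Theorem-the-characteristic-of-C-pseudo-cone-extension}, so nothing further is needed; one may additionally note that necessarily $z=z_1+z_2$ and $\mathbb{A}=\mathbb{A}_1+\mathbb{A}_2$.

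There is no genuine obstacle here: the corollary is a clean consequence of the asymptotic decomposition plus Lemma \ref{Minkowski-sum-of-C-asymptotic-set}. The only points worth a single line of justification are that Minkowski addition commutes with translation (so the two translation vectors can be collected into $z_1+z_2$) and that $z_1+z_2\in C$; both are routine. As an alternative route that avoids invoking Lemma \ref{Lemma-equivalence-C-starting-point-and-C-asymptotic-set}, one could instead cite Lemma 3.9 of \cite{Semenov-Zhao} for the fact that $E+F$ is a $C$-pseudo-cone, and then check, using $(i)$ of Theorem \ref{Theorem-the-characteristic-of-C-pseudo-cone-extension} and the additivity of support functions, that $h_{E+F}(v)=h_E(v)+h_F(v)=\langle z_1,v\rangle+\langle z_2,v\rangle=\langle z_1+z_2,v\rangle$ for every $v\in\partial\Omega_{C^\circ}$, which again establishes non-degeneracy; the first route is shorter and self-contained, so I would present that one.
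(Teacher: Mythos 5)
Your proposal is correct and follows essentially the same route as the paper: decompose $E=z_1+\mathbb{A}_1$, $F=z_2+\mathbb{A}_2$ via Theorem \ref{Theorem-the-characteristic-of-C-pseudo-cone-extension}, invoke Lemma \ref{Minkowski-sum-of-C-asymptotic-set} for $\mathbb{A}_1+\mathbb{A}_2$, and recombine as $(z_1+z_2)+(\mathbb{A}_1+\mathbb{A}_2)$. The only (harmless) difference is that you pass explicitly through Lemma \ref{Lemma-equivalence-C-starting-point-and-C-asymptotic-set}$(i)$ to certify that the sum is a $C$-pseudo-cone with $C$-starting point $z_1+z_2$, a step the paper absorbs into its second application of Theorem \ref{Theorem-the-characteristic-of-C-pseudo-cone-extension}; this is, if anything, slightly more careful.
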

\begin{proof}
Let $z_1$ and $z_2$ be the $C$-starting point of $E$ and $F$, respectively. By Theorem \ref{Theorem-the-characteristic-of-C-pseudo-cone-extension}, there exists two $C$-asymptotic set $\mathbb{A}_1$ and $\mathbb{A}_2$ such that
\begin{equation*}
E=\mathbb{A}_1+z_1,\,F=\mathbb{A}_2+z_2.
\end{equation*}
By Lemma \ref{Minkowski-sum-of-C-asymptotic-set}, the Minkowski sum $\mathbb{A}_1+\mathbb{A}_2$ is also a $C$-asymptotic set. Noting that $z_1+z_2\in C$ and applying Theorem \ref{Theorem-the-characteristic-of-C-pseudo-cone-extension} again, the set
\begin{equation*}
E+F=(\mathbb{A}_1+z_1)+(\mathbb{A}_2+z_2)=(\mathbb{A}_1+\mathbb{A}_2)+(z_1+z_2)
\end{equation*}
is also a non-degenerated $C$-pseudo-cone.
\end{proof}

Recently, Lemma \ref{Minkowski-sum-of-C-asymptotic-set} was also established by Semenov and Zhao \cite{Semenov-Zhao}. Moreover, they also established the following results.
\begin{lemma}[see \cite{Semenov-Zhao}]\label{Minkowski-sum-of-C-pseudo-cone}
The Minkowski sum $E+F$ of two $C$-pseudo-cones $E$ and $F$ is still a $C$-pseudo-cone.
\end{lemma}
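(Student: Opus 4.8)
The plan is to verify directly, for $E+F$, the defining properties of a $C$-pseudo-cone: that $E+F$ is a nonempty closed convex set with $o\notin E+F$, that $\lambda w\in E+F$ for all $\lambda\geqslant1$ and $w\in E+F$, and that ${\rm rec}(E+F)=C$. Nonemptiness and convexity are clear. The dilation property is immediate: if $w=a+b$ with $a\in E$ and $b\in F$, then $\lambda a\in E$ and $\lambda b\in F$ since $E$ and $F$ are pseudo-cones, so $\lambda w=\lambda a+\lambda b\in E+F$. It will also be convenient to record that, by the characterization $E\subset{\rm rec}\,E$ recalled in the excerpt, we have $E\subset{\rm rec}\,E=C$ and $F\subset{\rm rec}\,F=C$, hence $E+F\subset C$.

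The step I expect to be the main obstacle is the closedness of $E+F$, since Minkowski sums of unbounded closed convex sets need not be closed; here it works precisely because $C$ is pointed. I would fix $\mathfrak{u}\in\Omega_C\cap(-\Omega_{C^\circ})$, so that $\langle\mathfrak{u},x\rangle>0$ for every $x\in C\setminus\{o\}$; by compactness of $\mathbb{S}^{n-1}\cap C$ there is a constant $c>0$ with $\langle\mathfrak{u},x\rangle\geqslant c|x|$ for all $x\in C$. If $x_k\in E$ and $y_k\in F$ satisfy $x_k+y_k\to w$, then $c(|x_k|+|y_k|)\leqslant\langle\mathfrak{u},x_k\rangle+\langle\mathfrak{u},y_k\rangle=\langle\mathfrak{u},x_k+y_k\rangle$ stays bounded, so $\{x_k\}$ and $\{y_k\}$ are bounded; extracting convergent subsequences $x_k\to x$, $y_k\to y$ and using that $E,F$ are closed, we get $x\in E$, $y\in F$, and $w=x+y\in E+F$. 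Hence $E+F$ is closed.

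Finally I would check $o\notin E+F$ and ${\rm rec}(E+F)=C$. If $o=x+y$ with $x\in E\subset C$ and $y\in F\subset C$, then $x=-y\in C\cap(-C)=\{o\}$ by pointedness, forcing $o=x\in E$, a contradiction. For the recession cone, first note that for any $c\in C={\rm rec}\,E$ one has $(E+F)+c=(E+c)+F\subset E+F$, so $C\subset{\rm rec}(E+F)$; conversely, fixing $w_0\in E+F$, any $v\in{\rm rec}(E+F)$ satisfies $w_0+tv\in E+F\subset C$ for all $t\geqslant0$, hence $v=\lim_{t\to\infty}(w_0+tv)/t\in C$ because $C$ is a closed cone, so ${\rm rec}(E+F)\subset C$. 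Therefore ${\rm rec}(E+F)=C$, and, combined with the properties established above, $E+F$ is a $C$-pseudo-cone. (One could alternatively invoke the standard identity ${\rm rec}(E+F)={\rm rec}\,E+{\rm rec}\,F$, valid here since $E+F$ is closed, to get ${\rm rec}(E+F)=C+C=C$ at once.)
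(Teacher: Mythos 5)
Your proof is correct, and it is worth noting that the paper itself offers no argument for this lemma at all: it simply cites it (Lemma 3.9 of Semenov--Zhao), and its own machinery in Section 3 only treats the related special cases, namely the sum of two $C$-asymptotic sets and of two non-degenerated $C$-pseudo-cones, via the support-function/half-space representation $\mathbb{A}=\bigcap_{v\in\Omega_{C^\circ}}H^-(v,h_{\mathbb{A}}(v))$ and the $C$-compatibility (closed convex hull ${\rm conv}(\cdot,C)$) formalism. Your route is different and more elementary: you verify the defining properties directly, and you correctly isolate the only genuinely delicate point, the closedness of $E+F$, which you settle by fixing $\mathfrak{u}\in\Omega_C\cap(-\Omega_{C^\circ})$ and using the compactness of $\mathbb{S}^{n-1}\cap C$ to get $\langle\mathfrak{u},x\rangle\geqslant c|x|$ on $C$, so that summands of a convergent sequence in $E+F$ stay bounded; pointedness of $C$ also gives $o\notin E+F$, and your two-sided argument for ${\rm rec}(E+F)=C$ (the inclusion $C\subset{\rm rec}(E+F)$ from ${\rm rec}\,E=C$, and the reverse inclusion from $E+F\subset C$ with $C$ a closed cone) is sound, using $E\subset{\rm rec}\,E=C$ as recalled in the paper. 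The payoff of your approach is a short, self-contained proof covering all $C$-pseudo-cones, including the degenerated ones that the paper's own asymptotic decomposition (Theorem 4 and Corollary 2) cannot reach; the paper's support-function approach, by contrast, yields the finer conclusion in the non-degenerated case (the sum of the $C$-starting points is the $C$-starting point of the sum), which your argument does not address but the lemma does not require.
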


\section{Finiteness of the asymptotic weighted co-volume}\label{section-finiteness}

Recall that the weight function $\Theta: C\setminus\{o\}\rightarrow (0,\infty)$ is a $(-q)$-homogeneous continuous function with $q\in \mathbb{R}$. By the compactness of the set $C(1)=\{x\in C\,|\,\langle x,\mathfrak{u}\rangle=1\}$, we can define two positive constants
\begin{equation*}
m_{\Theta}=\min_{x\in C(1)}\Theta(x),\  \,M_{\Theta}=\max_{x\in C(1)}\Theta(x).
\end{equation*}
For any $x\in C\setminus\{o\}$, since $\frac{1}{\langle x,\mathfrak{u}\rangle}x\in C(1)$, it follows that
\begin{equation}\label{upper-lower-estimate-of-weight}
m_{\Theta}\langle x,\mathfrak{u}\rangle^{-q}\leqslant\Theta(x)\leqslant M_{\Theta}\langle x,\mathfrak{u}\rangle^{-q}.
\end{equation}
Let $x_0\in\text{int}\,E$ and $t_0=\langle x_0,\mathfrak{u}\rangle$. Then, we have $x_0+C=\text{conv}(x_0,C)\subset\text{conv}(E,C)=E$. Similar to \cite{Schneider-A_weighted_Minkowski_theorem}, we define the sequence $t_i=t_0+i$, for $i\in\mathbb{N}$, and the sequences of sets
\begin{equation}\label{sequence}
\left\{
\begin{aligned}
E_i&=\{x\in E\,|\,t_i\leqslant\langle x,\mathfrak{u}\rangle\leqslant t_{i+1}\},\ \partial^*E_i=\{x\in\partial E_i\,|\,t_i<\langle x,\mathfrak{u}\rangle<t_{i+1}\},\\
\overline{E_i}&=E\cap C(t_{i+1})+\{\lambda\mathfrak{u}\,|\,-1\leqslant\lambda\leqslant0\},\ \partial^*\overline{E_i}= \{x\in\partial\overline{E_i}\,|\,t_i<\langle x,\mathfrak{u}\rangle<t_{i+1}\},\\
\underline{E_i}&=E\cap C(t_i)+\{\lambda\mathfrak{u}\,|\,0\leqslant\lambda\leqslant1\},\ \partial^*\underline{E_i}= \{x\in\partial\underline{E_i}\,|\,t_i<\langle x,\mathfrak{u}\rangle<t_{i+1}\},
\end{aligned}
\right.
\end{equation}
where the upper cylinders $\overline{E_i}$, lower cylinders $\underline{E_i}$, and convex bodies $E_i$ satisfy the relationship:
\begin{equation}\label{upper-cylinders-lower-cylinders-convex-bodies-contain}
\underline{E_i}\subset E_i\subset\overline{E_i}.
\end{equation}

Firstly, we present the following result regrading the weighted volume of $C$-pseudo-cones:
\begin{lemma}\label{Finiteness-weighted volume}
Let $E$ be a $C$-pseudo-cone. If $q>n$, then $V_{\Theta}(E)$ is finite; if $q\leqslant n$, then $V_{\Theta}(E)$ is infinite.
\end{lemma}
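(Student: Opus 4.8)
The plan is to compare the weighted volume $V_\Theta(E) = \int_E \Theta(x)\,d\mathcal{H}^n(x)$ with the corresponding integral over a translated copy of the cone, using the inclusions established in the excerpt. Since $E$ is a $C$-pseudo-cone, picking $x_0 \in \operatorname{int} E$ gives $x_0 + C \subset E$, so $V_\Theta(E) \geqslant V_\Theta(x_0+C)$; on the other hand, $E \subset C$ trivially gives $V_\Theta(E) \leqslant V_\Theta(C)$ in the sense of the improper integral. So the statement will follow once I show that $\int_{x_0+C}\Theta\,d\mathcal{H}^n$ diverges for $q \leqslant n$ and that $\int_C \Theta\,d\mathcal{H}^n$ converges for $q > n$. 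Actually the cleanest route is to work entirely with cones: bound $\Theta$ above and below by multiples of $\langle x,\mathfrak{u}\rangle^{-q}$ via \eqref{upper-lower-estimate-of-weight}, and then evaluate $\int_{C}\langle x,\mathfrak{u}\rangle^{-q}\,d\mathcal{H}^n(x)$ and $\int_{x_0+C}\langle x,\mathfrak{u}\rangle^{-q}\,d\mathcal{H}^n(x)$ by slicing along the level sets $C(t)$.

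First I would set up the slicing. Write $x \in C$ according to its $\mathfrak{u}$-coordinate $t = \langle x,\mathfrak{u}\rangle > 0$; the slice $C(t) = \{x \in C : \langle x,\mathfrak{u}\rangle = t\}$ is a dilate of $C(1)$ by the factor $t$ (since $C$ is a cone), so its $(n-1)$-dimensional Hausdorff measure is $\mathcal{H}^{n-1}(C(t)) = t^{n-1}\mathcal{H}^{n-1}(C(1)) =: c\, t^{n-1}$ with $0 < c < \infty$ because $C(1)$ is a compact $(n-1)$-dimensional convex body. By Fubini/the coarea formula applied to the linear function $\langle \cdot,\mathfrak{u}\rangle$ (whose gradient has constant norm $1$), we get
\begin{equation*}
\int_{C}\langle x,\mathfrak{u}\rangle^{-q}\,d\mathcal{H}^n(x) = \int_0^\infty t^{-q}\,\mathcal{H}^{n-1}(C(t))\,dt = c\int_0^\infty t^{n-1-q}\,dt.
\end{equation*}
This integral diverges at $0$ when $n-1-q \leqslant -1$, i.e. $q \geqslant n$, and diverges at $\infty$ when $n-1-q \geqslant -1$, i.e. $q \leqslant n$; in particular it diverges for every real $q$ over the full cone, which is why one must use the \emph{translated} cone $x_0 + C$ for the finiteness direction. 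For $x_0 + C$ the slice at height $t$ is nonempty only for $t \geqslant t_0 = \langle x_0,\mathfrak{u}\rangle$, and is a translate of a dilate of $C(1)$ of size comparable to $t - t_0$, hence $\mathcal{H}^{n-1}\big((x_0+C)\cap C(t)\big) \leqslant c\,t^{n-1}$ (in fact $\sim c(t-t_0)^{n-1}$), so $\int_{x_0+C}\langle x,\mathfrak{u}\rangle^{-q}\,d\mathcal{H}^n \leqslant c\int_{t_0}^\infty t^{n-1-q}\,dt$, which converges precisely when $n-1-q < -1$, i.e. $q > n$. Combining: for $q > n$, $V_\Theta(E) \leqslant M_\Theta \int_C \langle x,\mathfrak{u}\rangle^{-q}\,d\mathcal{H}^n$ — wait, this is the divergent one, so instead I bound $V_\Theta(E) = V_\Theta(E_{\mathrm{bdd}}) + \int_{E \setminus B}\Theta$ where the tail lies in $C$ and decays like the convergent translated-cone tail; more simply, $E \subset C$ but the mass of $E$ near $o$ is finite because $o \notin E$ means $E$ stays a positive distance from the origin, so $\langle x,\mathfrak{u}\rangle \geqslant t_* > 0$ on $E$, giving $V_\Theta(E) \leqslant M_\Theta \int_{t_*}^\infty t^{n-1-q}\,dt < \infty$ for $q > n$. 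For $q \leqslant n$, $V_\Theta(E) \geqslant m_\Theta \int_{x_0+C}\langle x,\mathfrak{u}\rangle^{-q}\,d\mathcal{H}^n \geqslant m_\Theta\, c' \int_{t_0}^\infty (t-t_0)^{n-1-q}\,dt = +\infty$.

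The main obstacle is not any single estimate — each is routine — but rather keeping the two directions honest: one must observe that $o \notin E$ forces $E$ to lie in a half-space $\{\langle x,\mathfrak{u}\rangle \geqslant t_*\}$ with $t_* > 0$ (so there is no divergence of $V_\Theta(E)$ near the origin, which is what kills finiteness for the full cone $C$ but not for $E$), and simultaneously that $E \supset x_0 + C$ for some $x_0$ with $t_0 > 0$ (so the tail at infinity is genuinely cone-sized from below). Both facts are immediate from the pseudo-cone property and $o \notin E$, but they are the crux of why the threshold sits exactly at $q = n$ rather than somewhere else. Once these containments and the slice volume formula $\mathcal{H}^{n-1}(C(t)) = t^{n-1}\mathcal{H}^{n-1}(C(1))$ are in hand, the proof reduces to the elementary fact that $\int_1^\infty t^{n-1-q}\,dt < \infty \iff q > n$.
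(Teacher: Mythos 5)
Your proof is correct and follows essentially the same route as the paper: both rest on the bound $m_\Theta\langle x,\mathfrak{u}\rangle^{-q}\leqslant\Theta(x)\leqslant M_\Theta\langle x,\mathfrak{u}\rangle^{-q}$, the sandwich $x_0+C\subset E\subset C$, and slicing by the level sets $C(t)$, with the threshold $q=n$ coming from $\int^{\infty}t^{n-1-q}\,dt$; the paper merely discretizes into unit slabs $E_i$ bounded between cylinders and invokes the $p$-series comparison test, while you integrate the slices continuously. The only points to tidy are routine: the uniform bound $\langle x,\mathfrak{u}\rangle\geqslant t_*>0$ on $E$ uses $\langle x,\mathfrak{u}\rangle\geqslant\varepsilon|x|$ on $C$ (valid since $-\mathfrak{u}\in\mathrm{int}\,C^\circ$) together with $d(o,E)>0$, and in the divergence step the pointwise bound $t^{-q}(t-t_0)^{n-1}\geqslant c'(t-t_0)^{n-1-q}$ should be asserted only for, say, $t\geqslant 2t_0$, which still gives the divergent tail.
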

\begin{proof}
Since $\Theta$ is continuous on $C\setminus\{o\}$ and $o\notin E$, it follows that
\begin{equation*}
\int_{E\cap C^-(t_0)}\Theta(x)\,d\mathcal{H}^n(x)<\infty.
\end{equation*}
Then, the following integral can be divided as
\begin{equation*}
\int_{E\setminus C^-(t_0)}\Theta(x)\,d\mathcal{H}^n(x)=\sum_{i=0}^{+\infty}\int_{E_i}\Theta(x)\,d\mathcal{H}^n(x)\triangleq
\sum_{i=0}^{+\infty}J_i(E).
\end{equation*}
$(i)$. For $q>n$, by \eqref{upper-lower-estimate-of-weight} and \eqref{upper-cylinders-lower-cylinders-convex-bodies-contain}, we have the following estimate:
\begin{equation*}
\begin{aligned}
J_i(E)&\leqslant\int_{E_i}M_{\Theta}\langle x,\mathfrak{u}\rangle^{-q}\,d\mathcal{H}^n(x)\\
&\leqslant M_{\Theta}\int_{E_i}t_i^{-q}\,d\mathcal{H}^n(x)=M_{\Theta}t_i^{-q}\mathcal{H}^n(E_i)\\
&\leqslant M_{\Theta}t_i^{-q}\mathcal{H}^n(\overline{E_i})=M_{\Theta}t_i^{-q}\mathcal{H}^{n-1}(E\cap C(t_{i+1}))\\
&\leqslant M_{\Theta}t_i^{-q}\mathcal{H}^{n-1}(C(t_{i+1}))=M_{\Theta}\mathcal{H}^{n-1}(C(1))\,t_i^{-q}(t_0+i+1)^{n-1},
\end{aligned}
\end{equation*}
which implies
\begin{equation*}
\begin{aligned}
\lim_{i\rightarrow+\infty}i^{q-n+1}J_i(E)&\leqslant\lim_{i\rightarrow+\infty}i^{q-n+1}M_{\Theta}\mathcal{H}^{n-1}(C(1))
t_i^{-q}(t_0+i+1)^{n-1}\\
&=M_{\Theta}\mathcal{H}^{n-1}(C(1))\lim_{i\rightarrow+\infty}\frac{i^q(t_0+i+1)^{n-1}}{i^{n-1}(t_0+i)^q}\\
&=M_{\Theta}\mathcal{H}^{n-1}(C(1))<+\infty.
\end{aligned}
\end{equation*}
According to the comparison test for $p$-series, and since $q-n+1>1$, the infinite series $\sum_{i=0}^{+\infty}J_i(E)$ is convergent, so $V_{\Theta}(E)$ is finite.\\
$(ii)$. For $q\leqslant n$, as above, we have
\begin{equation*}
\begin{aligned}
J_i(E)&\geqslant m_{\Theta}\int_{E_i}t_{i+1}^{-q}\,d\mathcal{H}^n(x)=m_{\Theta}t_{i+1}^{-q}\mathcal{H}^n(E_i)\\
&\geqslant m_{\Theta}t_{i+1}^{-q}\mathcal{H}^n(\underline{E_i})
=m_{\Theta}t_{i+1}^{-q}\mathcal{H}^{n-1}(E\cap C(t_i))\\
&\geqslant m_{\Theta}t_{i+1}^{-q}\mathcal{H}^{n-1}((x_0+C)\cap C(t_i))
=m_{\Theta}\mathcal{H}^{n-1}(x_0+C(1))\,i^{n-1}t_{i+1}^{-q}\\
&=m_{\Theta}\mathcal{H}^{n-1}(C(1))\,i^{n-1}t_{i+1}^{-q},
\end{aligned}
\end{equation*}
which also leads to
\begin{equation*}
\begin{aligned}
\lim_{i\rightarrow+\infty}i^{q-n+1}J_i(E)&\geqslant m_{\Theta}\mathcal{H}^{n-1}(C(1))\lim_{i\rightarrow+\infty}
\frac{i^q}{(t_0+i+1)^q}=m_{\Theta}\mathcal{H}^{n-1}(C(1))>0.
\end{aligned}
\end{equation*}
Again, by the comparison test for $p$-series, and since $q-n+1\leqslant1$, we have $\sum_{i=0}^{+\infty}J_i(E)=+\infty$, so $V_{\Theta}(E)$ is infinite.
\end{proof}

Next, we provide the proof of a key estimate concerning the finiteness of weighted surface area measure, as discussed in \cite[p.\,9]{Schneider-A_weighted_Minkowski_theorem}.
\begin{lemma}
For the sets in \eqref{sequence}, there exists a constant $c$, independent of $i\in \mathbb{N}$, such that
\begin{equation*}
\mathcal{H}^{n-1}((\overline{E_i}\setminus E_i)\cap C(t_i))\leqslant c\mathcal{H}^{n-2}(\partial E\cap C(t_i)).
\end{equation*}
\end{lemma}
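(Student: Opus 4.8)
The plan is to compare the "boundary slab" $(\overline{E_i}\setminus E_i)\cap C(t_i)$ — the region in the hyperplane $C(t_i)$ that lies under the upper cylinder but outside $E_i$ — with the boundary of the cross section $\partial E\cap C(t_i)$, using the convexity of $E$ together with the uniform bound $\tfrac{1}{C_2}\le\langle x_0,\mathfrak u\rangle$-type control on the geometry of $E$ near $C$. First I would set up local coordinates: pick the hyperplane $H\cong\mathbb R^{n-1}$ from Lemma~\ref{locally-Lipschitz-radial-map-of-C-pseudo-cone} on which $E$ is the epigraph of a convex function $f$, with $C\setminus\{0\}\subset H^+\setminus H$. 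The cross section $E\cap C(t)$ is a convex body in the $(n-1)$-dimensional affine slice $C(t)$, and $\overline{E_i}\cap C(t_i)=E\cap C(t_{i+1})$ translated down by $\mathfrak u$, which (by the pseudo-cone inclusion $x+C\subset E$) contains $E\cap C(t_i)$. So $(\overline{E_i}\setminus E_i)\cap C(t_i)$ is exactly $[E\cap C(t_{i+1})-\mathfrak u]\setminus[E\cap C(t_i)]$, a "collar" between two nested convex bodies in $C(t_i)$.

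Next I would estimate the $\mathcal H^{n-1}$-measure of this collar. Since $E\cap C(t_i)\subset (E\cap C(t_{i+1})-\mathfrak u)$ and both sit inside the fixed-shape cone section $C(t_i)$ whose diameter grows like $i$, the collar has $(n-1)$-volume at most (diameter of outer body) $\times$ $\mathcal H^{n-2}(\partial(E\cap C(t_i)))$, by a standard Steiner/coarea-type estimate for the region swept between two nested convex bodies: any point of the collar lies on a segment from $\partial(E\cap C(t_i))$ pointing outward of bounded length. Here "bounded length" is the crucial point — I claim the Hausdorff distance between $E\cap C(t_i)$ and $E\cap C(t_{i+1})-\mathfrak u$ inside $C(t_i)$ is bounded by a constant independent of $i$. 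This follows because both boundaries are graphs of the convex function $f$ restricted to nested $(n-1)$-dimensional regions, and the Lipschitz constant of $f$ controls how far the graph moves as the level $\langle x,\mathfrak u\rangle$ increases by one unit; combined with the fact that the recession directions of $E$ are exactly those of $C$, the transversal spread between consecutive slices stabilizes. Thus the collar width is $\le c_0$ uniformly, and $\mathcal H^{n-1}(\text{collar})\le c_0\,\mathcal H^{n-2}(\partial(E\cap C(t_i)))\le c_0\,\mathcal H^{n-2}(\partial E\cap C(t_i))$, since $\partial(E\cap C(t_i))\subset\partial E\cap C(t_i)$ for $i$ large (the section boundary is part of the boundary of $E$).

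The main obstacle I anticipate is making the uniform bound on the collar width rigorous: one must rule out the degenerate possibility that $E$ becomes very "flat" against $\partial C$ so that, while the Hausdorff distance in $C(t_i)$ between consecutive normalized slices stays bounded, the bound on the ratio $\mathcal H^{n-1}(\text{collar})/\mathcal H^{n-2}(\partial E\cap C(t_i))$ could still blow up if the perimeter $\mathcal H^{n-2}(\partial E\cap C(t_i))$ shrinks relative to the collar area. To handle this I would invoke convexity more carefully: the collar is contained in the Minkowski difference region, and by monotonicity of mixed volumes (or directly, by projecting the collar onto $\partial(E\cap C(t_i))$ along the gradient direction of $f$, a Lipschitz map with bounded Jacobian), the collar area is genuinely controlled by perimeter times width, with the width bounded using $\mathrm{Lip}(f)$ and the fixed opening angle of $C$. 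A clean way to finish is: parametrize $\partial^*\overline{E_i}$ over $\partial^*E_i$ radially from the axis $\mathbb R\mathfrak u$, note this map is bi-Lipschitz with constants depending only on $C$ and $\mathrm{Lip}(f)$ (hence only on $E$), and read off the desired inequality with $c=c(C,\mathrm{Lip}(f))$.
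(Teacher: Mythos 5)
Your reduction to a ``collar'' between the nested convex bodies $I_i=E\cap C(t_i)$ and $O_i=(E\cap C(t_{i+1}))-\mathfrak{u}$ in the hyperplane $H(\mathfrak{u},t_i)$ is sound (strictly speaking $(\overline{E_i}\setminus E_i)\cap C(t_i)$ is only \emph{contained} in $O_i\setminus I_i$, since translating by $-\mathfrak{u}$ may leave $C$, but an inclusion is all you need), and the uniform bound on the width of the collar is in fact true. However, your justification of that width bound via $\mathrm{Lip}(f)$ and ``stabilizing recession directions'' is not an argument; the clean reason is the sandwich $x_0+C\subset E\subset C$ together with homogeneity of the cone sections: $O_i$ lies in a section of $C$ at height $t_0+i+1$, $I_i$ contains a translate of the section at height $i$, and two homothets of $C(1)$ with ratio $(t_0+i+1)/i$ stay at bounded Hausdorff distance. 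More seriously, the central inequality you invoke, $\mathcal{H}^{n-1}(O_i\setminus I_i)\leqslant(\text{width})\cdot\mathcal{H}^{n-2}(\partial I_i)$, is false for general nested convex bodies (take for $I$ a ball of radius $\varepsilon$ and width $1$: the tube has volume of order one while $\mathcal{H}^{n-2}(\partial I)\to0$); the correct statement is the coarea/Steiner identity $\mathcal{H}^{n-1}((I+dB)\setminus I)=\int_0^d\mathcal{H}^{n-2}(\partial(I+sB))\,ds$, which carries lower-order terms. You do flag this as the main obstacle, but the patches you propose do not close it: a nearest-point or ``gradient-direction'' projection sends an $(n-1)$-dimensional set onto an $(n-2)$-dimensional one, so a bounded Jacobian yields no bound on $\mathcal{H}^{n-1}$ of the collar, and the suggested bi-Lipschitz radial correspondence between $\partial^*\overline{E_i}$ and $\partial^*E_i$ relates two lateral $(n-1)$-dimensional surfaces and simply does not touch the asserted comparison between an $(n-1)$-measure and an $(n-2)$-measure. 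To make your route rigorous you must additionally use that $I_i$ contains a cone section of inradius of order $i$, so that $\mathcal{H}^{n-2}(\partial(I_i+sB))\leqslant c'\,\mathcal{H}^{n-2}(\partial I_i)$ uniformly for $0\leqslant s\leqslant d$; at that point you are comparing growth rates $i^{n-2}$ against $i^{n-2}$, which is precisely the computation you were trying to bypass.

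For comparison, the paper's proof avoids tubes altogether: it bounds $\mathcal{H}^{n-1}((\overline{E_i}\setminus E_i)\cap C(t_i))$ by the difference of consecutive slice areas $\mathcal{H}^{n-1}(E\cap C(t_{i+1}))-\mathcal{H}^{n-1}(E\cap C(t_i))$, estimates this from above by $\mathcal{H}^{n-1}(C(1))\bigl[(t_0+i+1)^{n-1}-i^{n-1}\bigr]$ using $x_0+C\subset E\subset C$ and homogeneity, bounds $\mathcal{H}^{n-2}(\partial E\cap C(t_i))$ from below by $i^{n-2}\mathcal{H}^{n-2}(\partial C\cap C(1))$ via monotonicity of surface area under inclusion, and concludes with a binomial expansion, yielding an explicit constant. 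If you implement the corrections above, your argument becomes a longer tube-formula version of this same growth-rate comparison, so I would recommend adopting the direct computation.
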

\begin{proof}
Firstly, by the definitions in \eqref{sequence}, we have
\begin{equation*}
\begin{aligned}
\mathcal{H}^{n-1}((\overline{E_i}\setminus E_i)\cap C(t_i))&\leqslant\mathcal{H}^{n-1}((\overline{E_i}\setminus E_i)\cap H(\mathfrak{u},t_i))\\
&=\mathcal{H}^{n-1}(\overline{E_i}\cap H(\mathfrak{u},t_i))-\mathcal{H}^{n-1}(E_i\cap H(\mathfrak{u},t_i))\\
&=\mathcal{H}^{n-1}(E\cap C(t_{i+1}))-\mathcal{H}^{n-1}(E\cap C(t_i))\\
&\leqslant\mathcal{H}^{n-1}(C(t_{i+1}))-\mathcal{H}^{n-1}((x_0+C)\cap C(t_i))\\
&=(t_0+i+1)^{n-1}\mathcal{H}^{n-1}(C(1))-i^{n-1}\mathcal{H}^{n-1}(x_0+C(1))\\
&=\mathcal{H}^{n-1}(C(1))[(t_0+i+1)^{n-1}-i^{n-1}]\\
&=\mathcal{H}^{n-1}(C(1))\sum_{k=1}^{n-1}\dbinom{n-1}{k}i^{n-1-k}(t_0+1)^k.\\
\end{aligned}
\end{equation*}
Since the surface area of convex bodies increases monotonically with respect to set inclusion, we have
\begin{equation*}
\mathcal{H}^{n-2}(\partial E\cap C(t_i))\geqslant\mathcal{H}^{n-2}((x_0+\partial C)\cap C(t_i))=i^{n-2}\mathcal{H}^{n-2}(\partial C\cap C(1)),
\end{equation*}
thus, we have
\begin{equation*}
\begin{aligned}
\frac{\mathcal{H}^{n-1}((\overline{E_i}\setminus E_i)\cap C(t_i))}{\mathcal{H}^{n-2}(\partial E\cap C(t_i))}&\leqslant
\frac{\mathcal{H}^{n-1}(C(1))}{\mathcal{H}^{n-2}(\partial C\cap C(1))}\sum_{k=1}^{n-1}\dbinom{n-1}{k}i^{1-k}(t_0+1)^k\\
&\leqslant\frac{\mathcal{H}^{n-1}(C(1))}{\mathcal{H}^{n-2}(\partial C\cap C(1))}\sum_{k=1}^{n-1}\dbinom{n-1}{k}(t_0+1)^k\triangleq c.
\end{aligned}
\end{equation*}
\end{proof}
\begin{remark}\label{Finiteness-remark-weighted-surface-area-measure}
Using the above estimate, Schneider \cite{Schneider-A_weighted_Minkowski_theorem} proved that $S^{\Theta}_{n-1}(E,\cdot)$ is finite for any $q>n-1$ and every $C$-pseudo-cone $E$. In fact, one can also obtain the same results by using the binomial expansion and the comparison test for $p$-series directly. When $q<n-1$, Schneider \cite{Schneider-A_weighted_Minkowski_theorem} provided a counterexample to show that $S^{\Theta}_{n-1}(E,\Omega_{C^\circ})=+\infty$ for some $C$-pseudo-cone $E$.
\end{remark}

\begin{counterexample}
For the critical case $q=n-1$, we provide a counterexample as follows. In the plane $\mathbb{R}^2$, let $\mathfrak{u}=(\frac{\sqrt{2}}{2},\frac{\sqrt{2}}{2})$ and consider the weight function
\begin{equation*}
\Theta(x,y)=\langle\mathfrak{u},(x,y)\rangle^{-1}=\frac{\sqrt{2}}{x+y},\,(x,y)\in\mathbb{R}^2.
\end{equation*}
Choose a fixed cone $C=\{(x,y)\in\mathbb{R}^2\,|\,x\geqslant0,y\geqslant0\}$ and a $C$-pseudo-cone
\begin{equation*}
E=\{(x,y)\in C\,|\,y\geqslant\frac{1}{x}\},
\end{equation*}
then we have
\begin{equation*}
\begin{aligned}
S_{n-1}^\Theta(E,\Omega_{C^\circ})&=\int_{\{(x,y)\in C\,|\,y=\frac{1}{x}\}}\Theta(x,y)\,ds
=\int_0^{+\infty}\frac{\sqrt{2}}{x+\frac{1}{x}}\sqrt{1+\frac{1}{x^2}}\,dx\\
&=\sqrt{2}\int_0^{+\infty}\frac{1}{\sqrt{1+x^2}}\,dx>\sqrt{2}\int_1^{+\infty}\frac{1}{\sqrt{1+x^2}}\,dx\\
&\geqslant\int_1^{+\infty}\frac{1}{x}\,dx=+\infty.
\end{aligned}
\end{equation*}
Therefore, if $q\leqslant n-1$, the weighted surface area measure of a $C$-pseudo-cone may be infinite. However, it may also be finite, as in the case of $C$-full sets.
\end{counterexample}

The weighted co-volume of the $C$-pseudo-cone $E$ can be divided into
\begin{equation*}
\overline{V}_{\Theta}(E)=\int_{(C\setminus E)\cap\,\mathbb{B}^n}\Theta(x)\,d\mathcal{H}^n(x)+
\int_{(C\setminus E)\setminus\mathbb{B}^n}\Theta(x)\,d\mathcal{H}^n(x)\triangleq I^{\Theta}_0(E)+I^{\Theta}_{\infty}(E).
\end{equation*}
It has been proven that $I^{\Theta}_0(E)$ is finite for $q<n$ and $I^{\Theta}_{\infty}(E)$ is finite for $q>n-1$ (see \cite[Lemma 7]{Schneider-A_weighted_Minkowski_theorem}). Thus, $\overline{V}_{\Theta}(E)$ is finite for $n-1<q<n$. Here, we present some additional conclusions on the weighted co-volume.
\begin{lemma}\label{Finiteness-The-flaw-integral-I0E-infinite}
If $q\geqslant n$, then $I^{\Theta}_0(E)$ is infinite for every $C$-pseudo-cone $E$.
\end{lemma}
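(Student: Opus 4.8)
The plan is to bound $I^{\Theta}_0(E)$ from below by the weighted volume of a small truncated cone around the origin and to observe that this lower bound already diverges once $q\geqslant n$. Since $E$ is closed and $o\notin E$, we have $d(o,E)>0$; and because $C$ is pointed there is a constant $c_0>0$ with $\langle x,\mathfrak{u}\rangle\geqslant c_0|x|$ for all $x\in C$ (the function $\omega\mapsto\langle\omega,\mathfrak{u}\rangle$ is continuous and strictly positive on the compact set $\mathbb{S}^{n-1}\cap C$). Hence $C^-(\delta)\subset\overline{B(o,\delta/c_0)}$, so for $\delta>0$ small enough we get both $C^-(\delta)\subset\mathbb{B}^n$ and $C^-(\delta)\cap E=\emptyset$, i.e. $C^-(\delta)\subset(C\setminus E)\cap\mathbb{B}^n$. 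Combining this inclusion with the lower estimate \eqref{upper-lower-estimate-of-weight} gives
\begin{equation*}
I^{\Theta}_0(E)\geqslant\int_{C^-(\delta)}\Theta(x)\,d\mathcal{H}^n(x)\geqslant m_{\Theta}\int_{C^-(\delta)}\langle x,\mathfrak{u}\rangle^{-q}\,d\mathcal{H}^n(x).
\end{equation*}

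Next I would compute the last integral by slicing along the level sets of the $1$-Lipschitz linear functional $x\mapsto\langle x,\mathfrak{u}\rangle$, exactly as in the treatment of the sets $C(t)$ above (equivalently, apply Fubini after an orthogonal change of coordinates carrying $\mathfrak{u}$ to a coordinate axis). Since $C(t)=tC(1)$ one has $\mathcal{H}^{n-1}(C(t))=t^{n-1}\mathcal{H}^{n-1}(C(1))$, where $0<\mathcal{H}^{n-1}(C(1))<\infty$ because $C$ is $n$-dimensional and pointed (so $C(1)$ is a genuine $(n-1)$-dimensional compact set). Therefore
\begin{equation*}
\int_{C^-(\delta)}\langle x,\mathfrak{u}\rangle^{-q}\,d\mathcal{H}^n(x)=\int_0^{\delta}t^{-q}\,\mathcal{H}^{n-1}(C(t))\,dt=\mathcal{H}^{n-1}(C(1))\int_0^{\delta}t^{\,n-1-q}\,dt,
\end{equation*}
and the final one-dimensional integral diverges precisely when $n-1-q\leqslant-1$, i.e. when $q\geqslant n$ (for $q=n$ the integrand is $t^{-1}$, whose integral over $(0,\delta)$ is $+\infty$; for $q>n$ the exponent is $<-1$ and the integral diverges at $0$). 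This forces $I^{\Theta}_0(E)=+\infty$, proving the lemma.

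There is no real obstacle here; the only points needing a line of care are the positivity and finiteness of $\mathcal{H}^{n-1}(C(1))$ (guaranteed by $C$ being $n$-dimensional and line-free) and the legitimacy of the slicing identity (the coarea formula for $\langle\cdot,\mathfrak{u}\rangle$, or plain Fubini). I would also note that the very same computation yields $I^{\Theta}_0(E)<\infty$ for $q<n$, which is the complementary half already recorded before the lemma, and that, since $\overline{V}_{\Theta}(E)=I^{\Theta}_0(E)+I^{\Theta}_{\infty}(E)$ with both summands nonnegative, the lemma immediately gives $\overline{V}_{\Theta}(E)=+\infty$ for every $q\geqslant n$, consistent with the table in Theorem \ref{Theorem-finite-table}.
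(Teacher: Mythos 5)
Your proof is correct and follows essentially the same route as the paper: both bound $\Theta$ from below via $m_\Theta\langle x,\mathfrak{u}\rangle^{-q}$ from \eqref{upper-lower-estimate-of-weight}, restrict to a small region of $C$ near the origin disjoint from $E$, and reduce to a divergent one-dimensional improper integral with singularity at $0$ when $q\geqslant n$. The only (cosmetic) difference is that you slice the truncated cone $C^-(\delta)$ along the level sets of $\langle\cdot,\mathfrak{u}\rangle$, while the paper passes to $|x|^{-q}$ by Cauchy--Schwarz and integrates in polar coordinates over $C\cap rB_2^n$.
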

\begin{proof}
According to \eqref{upper-lower-estimate-of-weight} and the Cauchy-Schwarz inequality, we have
\begin{equation*}
\begin{aligned}
I^{\Theta}_0(E)&=\int_{(C\setminus E)\cap B^n_2}\Theta(x)\,d\mathcal{H}^n(x)\\
&\geqslant m_{\Theta}\int_{(C\setminus E)\cap B^n_2}\langle x,\mathfrak{u}\rangle^{-q}\,d\mathcal{H}^n(x)\\
&\geqslant m_{\Theta}\int_{(C\setminus E)\cap B^n_2}|x|^{-q}\,d\mathcal{H}^n(x).
\end{aligned}
\end{equation*}
Let $r=\min\{1,\min_{x\in E}|x|\}>0$, then $C\cap\,rB^n_2\subset(C\setminus E)\cap B^n_2$. Thus,
\begin{equation*}
\begin{aligned}
I^{\Theta}_0(E)&\geqslant m_{\Theta}\int_{C\cap\,rB^n_2}|x|^{-q}\,d\mathcal{H}^n(x)\\
&=m_{\Theta}\int_{C\cap\,\mathbb{S}^{n-1}}\int_0^r\frac{1}{r^{q-n+1}}\,drdu\\
&=m_{\Theta}\mathcal{H}^{n-1}(C\cap\mathbb{S}^{n-1})\int_0^r\frac{1}{r^{q-n+1}}\,dr.
\end{aligned}
\end{equation*}
Since $q-n+1\geqslant 1$, we have
\begin{equation*}
\int_0^r\frac{1}{r^{q-n+1}}\,dr=+\infty.
\end{equation*}
Thus, $I^{\Theta}_0(E)=+\infty$.
\end{proof}
\begin{counterexample}\label{Finiteness-remark-I-infty-integral}
If $q\leqslant n-1$, there is a counterexample that shows $I^{\Theta}_{\infty}(E)$ is infinite for some $C$-pseudo-cone $E$.
\end{counterexample}
\begin{proof}
In the plane $\mathbb{R}^2$, let us choose a fixed cone $C=\{(x,y)\in\mathbb{R}^2\,|\,x\geqslant0,y\geqslant0\}$ and consider the weight function
\begin{equation*}
\Theta(x,y)=\big\langle(\frac{\sqrt{2}}{2},\frac{\sqrt{2}}{2}),(x,y)\big\rangle^{-q}
=\Big(\frac{\sqrt{2}}{x+y}\Big)^q,\ \,(x,y)\in C,
\end{equation*}
where $0<q\leqslant1$. We construct a $C$-pseudo-cone $E=(1,1)+C=\{(x,y)\in\mathbb{R}^2\,|\,x\geqslant1,y\geqslant1\}$.
For $0<q<1$, we have
\begin{equation*}
\begin{aligned}
I^{\Theta}_{\infty}(E)&=\int_{(C\setminus E)\setminus\mathbb{B}^n}\Theta(x,y)\,d\mathcal{H}^2(x,y)
>\int_{[1,+\infty)\times[0,1]}\Big(\frac{\sqrt{2}}{x+y}\Big)^q\,dxdy\\
&=2^\frac{q}{2}\int_1^{+\infty}\bigg(\int_0^1\frac{1}{(x+y)^q}\,dy\bigg)\,dx
=\frac{2^\frac{q}{2}}{1-q}\int_1^{+\infty}\big((1+x)^{1-q}-x^{1-q}\big)\,dx.
\end{aligned}
\end{equation*}
Since
\begin{equation*}
\lim_{x\rightarrow+\infty}\frac{(1+\frac{1}{x})^{1-q}-1}{\frac{1}{x}}=1-q\in(0,1),
\end{equation*}
there exists $N>1$ such that
\begin{equation*}
(1+x)^{1-q}-x^{1-q}=x^{1-q}\Big((1+\frac{1}{x})^{1-q}-1\Big)\geqslant\frac{1-q}{2}\,x^{-q}
\end{equation*}
for all $x>N$. Thus,
\begin{equation*}
\begin{aligned}
I^{\Theta}_{\infty}(E)&=\frac{2^\frac{q}{2}}{1-q}\int_1^{+\infty}\big((1+x)^{1-q}-x^{1-q}\big)\,dx\\
&\geqslant\frac{2^\frac{q}{2}}{1-q}\bigg(\int_1^N\big((1+x)^{1-q}-x^{1-q}\big)\,dx+\frac{1-q}{2}\int_N^{+\infty}\frac{1}{x^q}\,dx\bigg)\\
&=+\infty.
\end{aligned}
\end{equation*}
For $q=1$, since
\begin{equation*}
\lim_{x\rightarrow+\infty}\frac{\log(1+\frac{1}{x})}{\frac{1}{x}}=1,
\end{equation*}
there exists a constant $N>1$ such that $\log(1+\frac{1}{x})>\frac{1}{2x}$ for all $x>N$. Thus, we have
\begin{equation*}
\begin{aligned}
I^{\Theta}_{\infty}(E)&>\sqrt{2}\int_1^{+\infty}\bigg(\int_0^1\frac{1}{x+y}\,dy\bigg)\,dx
=\sqrt{2}\int_1^{+\infty}\log(1+\frac{1}{x})\,dx\\
&\geqslant\sqrt{2}\bigg(\int_1^N\log(1+\frac{1}{x})\,dx+\int_N^{+\infty}\frac{1}{2x}\,dx\bigg)=+\infty.
\end{aligned}
\end{equation*}
Therefore, if $q\leqslant n-1$, then $I^{\Theta}_{\infty}(E)$ may be infinite. However, it may also be finite.
\end{proof}

Recall the $p$-improper integral over infinite interval:
\begin{equation*}
\int_1^{+\infty}\frac{1}{x^p}\,dx=\left\{
\begin{aligned}
&\text{finite value},\ &p>1,\\
&+\infty,\ &p\leqslant 1,
\end{aligned}
\right.
\end{equation*}
and the $p$-improper integral with singularities:
\begin{equation*}
\int_0^1\frac{1}{x^p}\,dx=\left\{
\begin{aligned}
&+\infty,\ &p\geqslant1,\\
&\text{finite value},\ &p<1.
\end{aligned}
\right.
\end{equation*}
From the above discussion, it is clear that the weighted volume $V_{\Theta}(E)$ of a $C$-pseudo-cone $E$ is the natural $n$-dimensional generalization of the $p$-improper integral over infinite interval, and the integral $I^{\Theta}_0(E)$ is the natural $n$-dimensional generalization of the $p$-improper integral with singularities. However, neither $V_{\Theta}(E)$ nor $I^{\Theta}_0(E)$ effectively captures the relative information about $E$. Furthermore, the weighted co-volume $\overline{V}_{\Theta}(E)$ calculates some invalid volumes of $E$ form the  perspective of the $C$-starting point of $E$. Therefore, we will focus on studying the asymptotic behavior of the weighted co-volume of $E$.

\begin{definition}
Let $z$ be the $C$-starting point of $C$-pseudo-cone $E$, i.e., $E$ can be decomposed into the sum of a $C$-asymptotic set $\mathbb{A}$ and the $C$-starting point $z$, then we define the asymptotic weighted co-volume of $E$ by
\begin{equation*}
T_{\Theta}(E)=T_{\Theta}(\mathbb{A},z)=V_{\Theta}((z+C)\setminus E)=\int_{(z+C)\setminus E}\Theta(x)\,d\mathcal{H}^n(x).
\end{equation*}
\end{definition}
\begin{remark}
Here we consider this asymptotic weighted co-volume $T_{\Theta}(E)$ as a functional of two variables. Thus, if we fix a $C$-asymptotic set $\mathbb{A}$, then $T_{\Theta}(E)=T_{\Theta}(\mathbb{A},\cdot)$ is precisely a generalized function on the cone $C$.
\end{remark}

It is evident that the weighted volume $V_{\Theta}(E)$ of a $C$-pseudo-cone $E$ serves the $n$-dimensional generalization of the $p$-improper integral over infinite interval. However, the $n$-dimensional generalization of the $p$-improper integral with singularities is more intricate, involving both the weighted co-volume $\overline{V}_{\Theta}(E)$ and the asymptotic weighted co-volume $T_{\Theta}(\mathbb{A},z)$. Moreover, the properties of the asymptotic weighted co-volume are superior to those of the weighted co-volume.

\begin{lemma}\label{Finiteness-asymptotic-weighted-co-volume}
Let $E$ be a $C$-asymptotic set and $q>n-1$. If $z\neq o$, $T_{\Theta}(E,z)$ is finite. If $z=o$, the following holds:
\begin{equation*}
T_{\Theta}(E,o)=\left\{
\begin{aligned}
&\text{finite},\ n-1<q<n,\\
&+\infty,\ q\geqslant n.
\end{aligned}
\right.
\end{equation*}
Thus, the origin $o$ is the unique singularity of $T_{\Theta}(E,\cdot)$ for $q\geqslant n$.
\end{lemma}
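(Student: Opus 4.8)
The plan is to reduce the statement to a single translation of the region $C\setminus E$ and then to feed it into the finiteness facts already recorded for $\overline{V}_\Theta$. First I would note the elementary identity $(z+C)\setminus(z+E)=z+(C\setminus E)$ (translation commutes with set difference), so that the change of variables $y=z+x$ gives
\begin{equation*}
T_\Theta(E,z)=\int_{C\setminus E}\Theta(z+x)\,d\mathcal{H}^n(x).
\end{equation*}
For $z=o$ the right-hand side is exactly $\overline{V}_\Theta(E)$; since a $C$-asymptotic set is a $C$-pseudo-cone by Lemma \ref{C-asymptotic-set}, the case $n-1<q<n$ follows from \cite[Lemma 7]{Schneider-A_weighted_Minkowski_theorem} (namely $I_0^\Theta(E)<\infty$ for $q<n$ and $I_\infty^\Theta(E)<\infty$ for $q>n-1$), while for $q\geqslant n$ Lemma \ref{Finiteness-The-flaw-integral-I0E-infinite} gives $\overline{V}_\Theta(E)=I_0^\Theta(E)+I_\infty^\Theta(E)\geqslant I_0^\Theta(E)=+\infty$.

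The substantive case is $z\neq o$. Here I would exploit that $C$ is line-free: since $-z\notin C$ we have $o\notin z+C$, and, more usefully, writing $t_{\ast}:=\langle z,\mathfrak{u}\rangle$ one has $t_{\ast}>0$ (because $\mathfrak{u}\in\Omega_C\cap(-\Omega_{C^\circ})$ and $z\neq o$) together with $\langle z+x,\mathfrak{u}\rangle=t_{\ast}+\langle x,\mathfrak{u}\rangle\geqslant t_{\ast}$ for every $x\in C$. I would then split $\int_{C\setminus E}\Theta(z+x)\,d\mathcal{H}^n(x)$ at the unit ball $\mathbb{B}^n$. On the bounded piece $(C\setminus E)\cap\mathbb{B}^n$, applying \eqref{upper-lower-estimate-of-weight} at the point $z+x$ and using $q>0$ with $\langle z+x,\mathfrak{u}\rangle\geqslant t_{\ast}$ yields $\Theta(z+x)\leqslant M_\Theta t_{\ast}^{-q}$, so this part contributes at most $M_\Theta t_{\ast}^{-q}\,\mathcal{H}^n(\mathbb{B}^n)<\infty$. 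On the unbounded piece $(C\setminus E)\setminus\mathbb{B}^n$, I would instead use $\langle z+x,\mathfrak{u}\rangle\geqslant\langle x,\mathfrak{u}\rangle$ (valid since $\langle z,\mathfrak{u}\rangle\geqslant 0$) and apply \eqref{upper-lower-estimate-of-weight} twice to get $\Theta(z+x)\leqslant M_\Theta\langle x,\mathfrak{u}\rangle^{-q}\leqslant (M_\Theta/m_\Theta)\,\Theta(x)$; hence this part is bounded by $(M_\Theta/m_\Theta)\,I_\infty^\Theta(E)$, which is finite for $q>n-1$ by \cite[Lemma 7]{Schneider-A_weighted_Minkowski_theorem}. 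Summing the two bounds gives $T_\Theta(E,z)<\infty$, and together with the computation for $z=o$ this shows that when $q\geqslant n$ the origin is the only point of $C$ at which $T_\Theta(E,\cdot)$ fails to be finite.

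The only point that really needs care is the interplay between the two potential sources of divergence: the singularity of $\Theta$ at the origin and its slow decay at infinity. Translating by a nonzero $z\in C$ kills the first — the translated region lies at distance at least $d(o,z+C)>0$ from the origin, and in fact $\langle\cdot,\mathfrak{u}\rangle$ is bounded below by $t_{\ast}$ on it — without hurting the second, since $\langle z+x,\mathfrak{u}\rangle\geqslant\langle x,\mathfrak{u}\rangle$ forces the translated weight to decay at infinity at least as fast as the original. Once this dichotomy is set up, what remains is routine homogeneity bookkeeping on top of the finiteness of $I_\infty^\Theta(E)$ for $q>n-1$.
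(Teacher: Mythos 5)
Your proposal is correct, but for the substantive case $z\neq o$ it follows a genuinely different route from the paper. You translate the region back, using $(z+C)\setminus(z+E)=z+(C\setminus E)$, and then dominate the translated weight pointwise via \eqref{upper-lower-estimate-of-weight}: $\Theta(z+x)\leqslant M_\Theta\langle z,\mathfrak{u}\rangle^{-q}$ on the part of $C\setminus E$ inside $\mathbb{B}^n$, and $\Theta(z+x)\leqslant (M_\Theta/m_\Theta)\,\Theta(x)$ outside it, so everything reduces to the recorded finiteness of $I^{\Theta}_{\infty}(E)$ for $q>n-1$ from \cite[Lemma 7]{Schneider-A_weighted_Minkowski_theorem}. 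The paper instead works directly in the shifted cone: it splits the region by the slab $C^-(t_0)$, handles the bounded piece by compactness (the region avoids the origin precisely because $z\neq o$), and re-runs a cross-sectional estimate in the spirit of Schneider's argument --- Fubini along $\mathfrak{u}$, the bound $\Theta\leqslant M_\Theta t^{-q}$ on $C(t)$, and a binomial expansion of $(t-\langle z,\mathfrak{u}\rangle)^{n-1}-(t-t_0)^{n-1}$, each term yielding a convergent improper integral since $q>n-1$. Your comparison trick is shorter and makes the two potential sources of divergence (the singularity of $\Theta$ at $o$ versus slow decay at infinity) completely transparent; the paper's direct computation is quantitative and serves as the template that is reused for the decay estimate in Lemma \ref{Decay estimate of the asymptotic weighted co-volume functinal}. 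Two minor remarks: your reading of the region, $(z+C)\setminus(z+E)$, is the one dictated by the definition of $T_\Theta(\mathbb{A},z)$ (the paper's proof writes $(z+C)\setminus E$, which is a subset since $z+E\subset E$, so your finiteness conclusion is if anything the stronger one); and in the case $z=o$ you simply cite the decomposition into $I^{\Theta}_0$, $I^{\Theta}_{\infty}$ together with Lemma \ref{Finiteness-The-flaw-integral-I0E-infinite}, where the paper repeats those estimates explicitly --- both are legitimate, since these facts are established before the lemma.
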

\begin{proof}
According to the definition of the asymptotic weighted co-volume, we have
\begin{equation*}
\begin{aligned}
T_{\Theta}(E,z)&=\int_{((z+C)\setminus E)\cap C^-(t_0)}\Theta(x)\,d\mathcal{H}^n(x)+\int_{t_0}^{+\infty}\int_{((z+C)\setminus E)\cap C(t)}\Theta(x)\,d\mathcal{H}^{n-1}(x)\,dt\\
&\triangleq\widetilde{J_1}(E)+\widetilde{J_2}(E).
\end{aligned}
\end{equation*}
$(i)$ If $z\neq o$, since $\Theta$ is continuous on $C\setminus\{o\}$ and $((z+C)\setminus E)\cap C^-(t_0)\subset C\setminus\{o\}$ is compact, the integral $\widetilde{J_1}(E)$ is finite. Similar to Lemma 7 in \cite{Schneider-A_weighted_Minkowski_theorem}, using \eqref{upper-lower-estimate-of-weight}, we obtain
\begin{equation*}
\begin{aligned}
\widetilde{J_2}(E)&\leqslant\int_{t_0}^{+\infty}\int_{((z+C)\setminus E)\cap C(t)}M_{\Theta}t^{-q}\,d\mathcal{H}^{n-1}(x)\,dt\\
&=M_{\Theta}\int_{t_0}^{+\infty}t^{-q}\big(\mathcal{H}^{n-1}((z+C)\cap C(t))-\mathcal{H}^{n-1}(E\cap C(t))\big)\,dt\\
&\leqslant M_{\Theta}\int_{t_0}^{+\infty}t^{-q}\big((t-\langle z,\mathfrak{u}\rangle)^{n-1}\mathcal{H}^{n-1}(C\cap C(1))-(t-t_0)^{n-1}
\mathcal{H}^{n-1}(C\cap C(1))\big)\,dt\\
&=M_{\Theta}\mathcal{H}^{n-1}(C\cap C(1))\int_{t_0}^{+\infty}t^{-q}\big((t-\langle z,\mathfrak{u}\rangle)^{n-1}-(t-t_0)^{n-1}\big)\,dt.
\end{aligned}
\end{equation*}
Next, using the binomial expansion:
\begin{equation*}
\begin{aligned}
&\int_{t_0}^{+\infty}\!\!\!\!\!t^{-q}\big((t-\langle z,\mathfrak{u}\rangle)^{n-1}-(t-t_0)^{n-1}\big)\,dt
\!=\!\sum_{i=1}^{n-1}\dbinom{n-1}{i}\big((-\langle z,\mathfrak{u}\rangle)^i-(-t_0)^i\big)
\int_{t_0}^{+\infty}\!\!\!\!\!\frac{1}{t^{q+1+i-n}}\,dt.
\end{aligned}
\end{equation*}
Since $q>n-1$, we have $q+1+i-n>i\geqslant 1$ for $i=1,\cdots,n-1$, which implies that
\begin{equation*}
\int_{t_0}^{+\infty}\frac{1}{t^{q+1+i-n}}\,dt<+\infty.
\end{equation*}
Thus, $T_{\Theta}(E,z)$ is finite.\\
$(ii)$ If $z=o$, similar to \cite[p.\,13]{Schneider-A_weighted_Minkowski_theorem} and Lemma \ref{Finiteness-The-flaw-integral-I0E-infinite}, for $n-1<q<n$ we have
\begin{equation*}
\begin{aligned}
\widetilde{J_1}(E)&=\int_{(C\setminus E)\cap C^-(t_0)}\Theta(x)\,d\mathcal{H}^n(x)\leqslant M_\Theta c^{-q}\int_{B^n_2(R)}|x|^{-q}\,d\mathcal{H}^n(x)\\
&=n\omega_nM_\Theta c^{-q}\int_0^R\frac{1}{r^{q+1-n}}\,dr<+\infty.
\end{aligned}
\end{equation*}
For $q\geqslant n$, we have
\begin{equation*}
\widetilde{J_1}(E)=\int_{(C\setminus E)\cap C^-(t_0)}\Theta(x)\,d\mathcal{H}^n(x)\geqslant m_{\Theta}\int_{(C\setminus E)\cap C^-(t_0)}|x|^{-q}\,d\mathcal{H}^n(x).
\end{equation*}
Let $r=\min\{t_0,\min_{x\in E}|x|\}>0$, then $C\cap\,rB^n_2\subset(C\setminus E)\cap C^-(t_0)$. Thus,
\begin{equation*}
\widetilde{J_1}(E)\geqslant m_{\Theta}\int_{C\cap\,rB^n_2}|x|^{-q}\,d\mathcal{H}^n(x)=m_{\Theta}\mathcal{H}^{n-1}
(C\cap\mathbb{S}^{n-1})\int_0^r\frac{1}{r^{q-n+1}}\,dr=+\infty.
\end{equation*}
It is easy to check that $\widetilde{J_2}(E)<+\infty$ for $z=o$. Therefore,
\begin{equation*}
T_{\Theta}(E,o)=\left\{
\begin{aligned}
&\text{finite},\ & n-1<q<n,\\
&\text{infinite},\ & q\geqslant n.
\end{aligned}
\right.
\end{equation*}
Moreover, it is not hard to prove that $T_{\Theta}(E,z)$ is continuous, that is, $T_{\Theta}(E_i,z_i)\rightarrow T_{\Theta}(E,z)$ as $(E_i,z_i)\rightarrow(E,z)$, similarly to \cite[p.\,14]{Schneider-A_weighted_Minkowski_theorem}.
\end{proof}
\begin{counterexample}\label{Finiteness-remark-asymptotic-weighted-co-volume}
Let $q\leqslant n-1$ and $z\neq o$. Then there exists some $C$-asymptotic set $E$ such that $T_{\Theta}(E,z)=+\infty$.
\end{counterexample}
\begin{proof}
Choose a fixed cone $C=\{(x,y)\in\mathbb{R}^2\,|\,x\geqslant0,y\geqslant0\}$ and consider the weight function
\begin{equation*}
\Theta(x,y)=\Big(\frac{\sqrt{2}}{x+y}\Big)^q,\,(x,y)\in C,
\end{equation*}
where $0<q\leqslant1$. Let $E=\{(x,y)\in C\,|\,y\geqslant\frac{1}{x}\}$ be a $C$-asymptotic set, and choose a $C$-starting point $z=(1,1)$. For $0<q<1$, we have
\begin{equation*}
\begin{aligned}
T_{\Theta}(E,z)&=\int_{(z+C)\setminus E}\Theta(x,y)\,d\mathcal{H}^2(x,y)>\int_{\{(x,y)\in C\,|\,x\geqslant2,1\leqslant y\leqslant1+\frac{1}{x}\}}\Big(\frac{\sqrt{2}}{x+y}\Big)^q\,dxdy\\
&=2^\frac{q}{2}\int_2^{+\infty}\bigg(\int_1^{1+x}\frac{1}{(x+y)^q}\,dy\bigg)\,dx
=\frac{2^\frac{q}{2}}{1-q}\int_2^{+\infty}\big((1+2x)^{1-q}-(1+x)^{1-q}\big)\,dx.
\end{aligned}
\end{equation*}
According to the limit
\begin{equation*}
\lim_{x\rightarrow+\infty}(1+\frac{x}{1+x})^{1-q}-1=2^{1-q}-1\in(0,1),
\end{equation*}
there exists $N>1$ such that
\begin{equation*}
(1+2x)^{1-q}-(1+x)^{1-q}=(1+x)^{1-q}\Big((1+\frac{x}{1+x})^{1-q}-1\Big)>\frac{2^{1-q}-1}{2}\,x^{1-q}
\end{equation*}
for all $x>N$. Thus,
\begin{equation*}
\begin{aligned}
T_{\Theta}(E,z)&>\frac{2^\frac{q}{2}}{1-q}\int_2^{+\infty}\big((1+2x)^{1-q}-(1+x)^{1-q}\big)\,dx\\
&\geqslant\frac{2^\frac{q}{2}}{1-q}\bigg(\int_2^N\big((1+2x)^{1-q}-(1+x)^{1-q}\big)\,dx
+\frac{2^{1-q}-1}{2}\int_N^{+\infty}x^{1-q}\,dx\bigg)\\
&=+\infty.
\end{aligned}
\end{equation*}
For $q=1$, due to
\begin{equation*}
\lim_{x\rightarrow+\infty}\log(1+\frac{x}{1+x})=\log2,
\end{equation*}
there exists a constant $N>1$ such that $\log(1+\frac{x}{1+x})>\frac{1}{2}\log2$ for all $x>N$. Thus,
\begin{equation*}
\begin{aligned}
T_{\Theta}(E,z)&>\sqrt{2}\int_2^{+\infty}\bigg(\int_1^{1+x}\frac{1}{x+y}\,dy\bigg)\,dx
=\sqrt{2}\int_2^{+\infty}\log(1+\frac{x}{1+x})\,dx\\
&\geqslant\sqrt{2}\bigg(\int_2^N\log(1+\frac{x}{1+x})\,dx+\frac{1}{2}\log2\int_N^{+\infty}dx\bigg)=+\infty.
\end{aligned}
\end{equation*}
Therefore, if $q\leqslant n-1$, the asymptotic weighted co-volume may be infinite, but it can also be finite.
\end{proof}

\begin{proof}[Proof of Theorem \ref{Theorem-finite-table}]
Let $E$ be a $C$-pseudo-cone and let $\mathbb{A}$ be a $C$-asymptotic set with $z\neq o$. According to Lemma \ref{Finiteness-The-flaw-integral-I0E-infinite}, the weighted co-volume $\overline{V}_{\Theta}(E)$ is infinite for $q\geqslant n$; as stated in Remark \ref{Finiteness-remark-I-infty-integral}, $\overline{V}_{\Theta}(E)$ can be either finite or infinite for $0<q\leqslant n-1$. The behavior of $T_{\Theta}(\mathbb{A},o)$ is same as $\overline{V}_{\Theta}(E)$. By Remark \ref{Finiteness-remark-weighted-surface-area-measure}, the weighted surface area measure $S_{n-1}^{\Theta}(E,\Omega_{C^\circ})$ can also be finite or infinite for $0\leqslant q\leqslant n-1$. From Lemma \ref{Finiteness-weighted volume}, the weighted volume $V_{\Theta}(E)$ is finite for $q>n$ and infinite for $0<q\leqslant n$. According to Lemma \ref{Finiteness-asymptotic-weighted-co-volume} and Remark \ref{Finiteness-remark-asymptotic-weighted-co-volume}, $T_{\Theta}(\mathbb{A},z)$ is finite for $q>n-1$, but it may be finite or infinite for $0<q\leqslant n-1$. Finally, combining these results with those in \cite{Schneider-A_weighted_Minkowski_theorem}, we can derive Theorem \ref{Theorem-finite-table}, which summarizes the behavior of the weighted co-volume, surface area measure, and asymptotic weighted co-volume.
\begin{center}
\setlength{\tabcolsep}{1mm}
\renewcommand\arraystretch{1.8}
\begin{tabular}{|l|l|l|l|l|l|}
  \hline
  \diagbox{condition}{measure} & $S_{n-1}^{\Theta}(E,\cdot)$ & $V_{\Theta}(E)$ & $\overline{V}_{\Theta}(E)$ & $T_{\Theta}(\mathbb{A},o)$ & $T_{\Theta}(\mathbb{A},z)$\\  \hline
  $q>n$ & \textcolor{blue}{finite} & \textcolor{blue}{finite} & \textcolor{red}{$+\infty$} & \textcolor{red}{$+\infty$} & \textcolor{blue}{finite} \\  \hline
  $q=n$ & \textcolor{blue}{finite} & $+\infty$ & \textcolor{red}{$+\infty$} & $+\infty$ & \textcolor{blue}{finite} \\  \hline
  $n-1<q<n$ & \textcolor{blue}{finite} & $+\infty$ & \textcolor{blue}{finite} & finite & \textcolor{blue}{finite} \\ \hline
  $0<q\leqslant n-1$ & finite or $+\infty$ & $+\infty$ & finite or $+\infty$ & finite or $+\infty$ & finite or $+\infty$ \\
  \hline
\end{tabular}
\end{center}
In the table above, the blue color indicates that there is some useful and interesting information.
\end{proof}

\section{Integral formulas and convolution formulas}

In Sections \ref{section-asymptotic} and \ref{section-finiteness}, we discussed several notable properties of the asymptotic $C$-pseudo-cone. Based on these properties, we can explore the relationships between $S^{\Theta}_{n-1}(E,\cdot)$, $\overline{V}_{\Theta}(E)$, and $T_{\Theta}(E)$. Firstly, we present the following results.

\begin{lemma}\label{Lemma-Integral-formula-co-volume-surface-area-measure}
Let $E$ be a $C$-pseudo-cone and $0\leqslant q<n$. Then, there exist integral representations of the weighted co-volume of $E$ as follows:

\begin{align}
\overline{V}_{\Theta}(E)&=\frac{1}{n-q}\int_{\Omega_C^e}\Theta(u)\rho^{n-q}_E(u)\,du,
\label{Polar-coordinates-co-volume}\\
\overline{V}_{\Theta}(E)&=\frac{1}{n-q}\int_{\Omega_{C^\circ}^e}\overline{h}_E(v)\,dS^{\Theta}_{n-1}(E,v).
\label{Integral-formula-weighted-surface-area-measure-weighted-co-volume}
\end{align}

\end{lemma}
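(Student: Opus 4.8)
The plan is to prove \eqref{Polar-coordinates-co-volume} by integration in polar coordinates, and then to derive \eqref{Integral-formula-weighted-surface-area-measure-weighted-co-volume} from \eqref{Polar-coordinates-co-volume} by feeding a suitable function into the co-area formula \eqref{The-co-area-formula-in-convex-geometry} and rewriting the resulting boundary integral by means of the push-forward identity \eqref{weighted-surface-area-measure-equivalent-formula}. Since every integrand occurring below is nonnegative, all identities are to be read as equalities in $[0,+\infty]$ (Tonelli's theorem), and the co-area and push-forward formulas are applied in this extended nonnegative sense; in particular no finiteness of $\overline V_\Theta(E)$ needs to be assumed beforehand.

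First I would record two elementary facts about $\rho_E$. Because $\mathrm{rec}\,E=C$ and $o\notin E$, for each $u\in\Omega_C$ the ray $\{ru:r>0\}$ eventually enters $E$, so $\rho_E(u)<+\infty$; and the pseudo-cone property gives $ru\in E$ if and only if $r\geqslant\rho_E(u)$. Hence, up to an $\mathcal H^n$-null set, $(\mathrm{int}\,C)\setminus E=\{ru:u\in\Omega_C^e,\ 0<r<\rho_E(u)\}$, where we also used that $\Omega_C\setminus\Omega_C^e$ is $\mathcal H^{n-1}$-null and that $\partial C$ is $\mathcal H^n$-null. Writing the integral defining $\overline V_\Theta(E)$ in polar coordinates and using that $\Theta$ is $(-q)$-homogeneous, i.e. $\Theta(ru)=r^{-q}\Theta(u)$ for $u\in\mathbb S^{n-1}$ and $r>0$, I obtain
\[
\overline V_\Theta(E)=\int_{\Omega_C^e}\Theta(u)\Big(\int_0^{\rho_E(u)}r^{\,n-1-q}\,dr\Big)\,du
=\frac{1}{n-q}\int_{\Omega_C^e}\Theta(u)\,\rho_E^{\,n-q}(u)\,du,
\]
the inner integral being finite precisely because $q<n$. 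This is \eqref{Polar-coordinates-co-volume}.

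For \eqref{Integral-formula-weighted-surface-area-measure-weighted-co-volume} I would apply the first identity in \eqref{The-co-area-formula-in-convex-geometry} with
\[
f(u)=\Theta(u)\,\rho_E^{-q}(u)\,\overline h_E(\boldsymbol{\alpha}_E(u)),
\]
chosen so that $f(u)\,\rho_E^n(u)/\overline h_E(\boldsymbol{\alpha}_E(u))=\Theta(u)\,\rho_E^{\,n-q}(u)$; then \eqref{The-co-area-formula-in-convex-geometry} turns the spherical integral in \eqref{Polar-coordinates-co-volume} into $\int_{\partial_eE}f(x/|x|)\,d\mathcal H^{n-1}(x)$. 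For $\mathcal H^{n-1}$-a.e. $x\in\partial_eE$ one has $|x|=\rho_E(x/|x|)$, $\boldsymbol{\alpha}_E(x/|x|)=\nu_E(x)$, and, again by $(-q)$-homogeneity, $\Theta(x/|x|)=|x|^{q}\Theta(x)$; substituting these gives $f(x/|x|)=\Theta(x)\,\overline h_E(\nu_E(x))$, hence
\[
\int_{\Omega_C^e}\Theta(u)\,\rho_E^{\,n-q}(u)\,du=\int_{\partial_eE}\overline h_E(\nu_E(x))\,\Theta(x)\,d\mathcal H^{n-1}(x).
\]
Finally, applying \eqref{weighted-surface-area-measure-equivalent-formula} with the nonnegative measurable function $\overline h_E$ on $\Omega_{C^\circ}^e$ rewrites the right-hand side as $\int_{\Omega_{C^\circ}^e}\overline h_E(v)\,dS^\Theta_{n-1}(E,v)$; combining with \eqref{Polar-coordinates-co-volume} and dividing by $n-q$ yields \eqref{Integral-formula-weighted-surface-area-measure-weighted-co-volume}.

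The polar-coordinate computation and the homogeneity bookkeeping are routine. The only delicate point — and the one I regard as the main obstacle, albeit a mild one — is the legitimacy of the change of variables: one needs that $r_E$ is an internally closed Lipschitz homeomorphism, differentiable $\mathcal H^{n-1}$-a.e. with the approximate Jacobian computed just before \eqref{The-co-area-formula-in-convex-geometry}, that $\Omega_C\setminus\Omega_C^e$ and the non-regular part of $\partial_eE$ are negligible, and that $\rho_E$ and $\overline h_E$ are measurable on the effective domains. All of this is supplied by Lemma \ref{locally-Lipschitz-radial-map-of-C-pseudo-cone} and the derivation preceding \eqref{The-co-area-formula-in-convex-geometry}, so no new input is required; one only has to note that \eqref{weighted-surface-area-measure-equivalent-formula}, stated there for bounded $f$, extends to nonnegative measurable $f$ by monotone approximation (equivalently, it is the defining property of the image measure $S^\Theta_{n-1}(E,\cdot)={\boldsymbol{\nu}_E}_\sharp(\Theta\mathcal H^{n-1}\llcorner\partial_eE)$), which is exactly what licenses the choice $f=\overline h_E$.
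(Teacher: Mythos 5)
Your proposal is correct and follows essentially the same route as the paper: polar coordinates with the $(-q)$-homogeneity of $\Theta$ for \eqref{Polar-coordinates-co-volume}, then the co-area formula \eqref{The-co-area-formula-in-convex-geometry} with exactly the same choice of integrand and the push-forward identity \eqref{weighted-surface-area-measure-equivalent-formula} for \eqref{Integral-formula-weighted-surface-area-measure-weighted-co-volume}. Your extra measure-theoretic care (Tonelli, extending the push-forward identity beyond bounded $f$) is harmless but not even needed, since $\overline h_E(v)\leqslant|x_0|$ for any fixed $x_0\in E$, so $\overline h_E$ is bounded on $\Omega_{C^\circ}^e$.
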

\begin{proof}
By using polar coordinates and the Newton-Leibniz formula for integrals, we have
\begin{equation*}
\overline{V}_{\Theta}(E)=\int_{\Omega_C^e}\int_0^{\rho_E(u)}\Theta(ru)r^{n-1}\,drdu=\frac{1}{n-q}
\int_{\Omega_C^e}\Theta(u)\rho^{n-q}_E(u)\,du.
\end{equation*}
Applying the co-area formula \eqref{The-co-area-formula-in-convex-geometry}, we obtain
\begin{equation*}
\begin{aligned}
\overline{V}_{\Theta}(E)&=\frac{1}{n-q}\int_{\Omega_C^e}\Theta(u)\rho^{-q}_E(u)\overline{h}_E(\alpha_E(u))\frac{\rho_E^n(u)}{\overline{h}_E(\alpha_E(u))}
\,d\mu_{\mathbb{S}^{n-1}}(u)\\
&=\frac{1}{n-q}\int_{\partial_eE}\Theta\left(\frac{x}{|x|}\right)|x|^{-q}\overline{h}_E(\nu_E(x))\,d\mathcal{H}^{n-1}(x)\\
&=\frac{1}{n-q}\int_{\partial_eE}\overline{h}_E(\nu_E(x))\Theta(x)\,d\mathcal{H}^{n-1}(x),
\end{aligned}
\end{equation*}
where $\overline{h}_E(\alpha_E(u))=\overline{h}_E(\nu_E(x))>0$. By the push-forward formula \eqref{weighted-surface-area-measure-equivalent-formula}, we conclude that
\begin{equation*}
\int_{\partial_eE}\overline{h}_E(\nu_E(x))\Theta(x)\,d\mathcal{H}^{n-1}(x)=
\int_{\Omega_{C^\circ}^e}\overline{h}_E(v)\,dS^{\Theta}_{n-1}(E,v).
\end{equation*}
\end{proof}

\begin{remark}
The integral formula analogous to \eqref{Integral-formula-weighted-surface-area-measure-weighted-co-volume} for the volume functional in the classical case can be established using approximations of convex polytopes (see e.g., \cite[p.\,275-276]{Schneider-book}). The study of the push-forward measure of the radial Gauss image was first undertaken by \cite{Huang-Lutwak-Yang-Zhang-Geometric_measures}, utilizing the Gauss-Green formula for finite perimeter sets and various approximation techniques. In fact, the co-area formula of Federer \cite{Federer-book} serves as a more powerful tool for these problems (see \cite[chapter.~4]{Schneider-book} and \cite[p.~170]{Hug-Weil-book}). Generally, the volume formulas for general measures with continuous density have also been derived using the co-area formula, as demonstrated in \cite{Kryvonos-Langharst-Weighted_Minkowski_existence_theorem_and_projection_bodies}. In the case of $C$-close sets, Schneider \cite{Schneider-A_Brunn_Minkowski_theory} utilized the volume formula for convex bodies in conjunction with approximation methods. Here, we also present a general version of the volume formula \eqref{Integral-formula-weighted-surface-area-measure-weighted-co-volume} utilizing the co-area formula.
\end{remark}

\begin{lemma}\label{Lemma-Integral-formula-asymptotic-co-volume-surface-area-measure}
Suppose $q>n-1$. Let $E$ be a $C$-pseudo-cone with its decomposition $E=z+\mathbb{A}$ for the $C$-asymptotic set $\mathbb{A}$ and the starting point $z\neq o$. If $q\neq n$, we have
\begin{equation}
\begin{aligned}
T_{\Theta}(\mathbb{A},z)&=\frac{1}{n-q}\int_{\Omega_{C^\circ}^e}\overline{h}_E(v)\,dS_{n-1}^\Theta(E,v)\\
&\quad+\frac{1}{n-q}\int_{\partial\Omega_{C^\circ}}\langle z,v\rangle\,dS_{n-1}^\Theta(z+C,v).
\end{aligned}
\end{equation}
If $q=n$, we have
\begin{equation}
\begin{aligned}
T_{\Theta}(\mathbb{A},z)&=\int_{\partial_eE}\Theta(x)\overline{h}_E(\nu_E(x))\log|x|\,d\mathcal{H}^{n-1}(x)\\
&\quad+\int_{z+\partial C}\Theta(x)\langle z,\nu_{z+C}(x)\rangle\log|x|\,d\mathcal{H}^{n-1}(x).
\end{aligned}
\end{equation}
\end{lemma}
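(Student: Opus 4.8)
The plan is to compute $T_{\Theta}(\mathbb{A},z)$ by slicing the region $(z+C)\setminus E$ along rays through the origin and then transporting the resulting spherical integrals to the boundaries of $E$ and of $z+C$ via the co-area formula \eqref{The-co-area-formula-in-convex-geometry}, exactly as in the proof of Lemma \ref{Lemma-Integral-formula-co-volume-surface-area-measure}. First note that $z+C$ is itself a $C$-pseudo-cone with $o\notin z+C$ (since $C$ is line-free and $z\neq o$), that $E\subset z+C$ by part $(i)$ of Theorem \ref{Theorem-the-characteristic-of-C-pseudo-cone-extension}, and hence that $\rho_{z+C}$ and $\rho_E$ are both defined and finite $\mathcal{H}^{n-1}$-a.e. on $\Omega_C^e$ with $\rho_{z+C}\leqslant\rho_E$ there. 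For a.e. $u\in\Omega_C^e$ the ray $\mathbb{R}_{+}u$ meets $(z+C)\setminus E$ precisely in $\{ru:\rho_{z+C}(u)<r<\rho_E(u)\}$, so by Fubini and the $(-q)$-homogeneity of $\Theta$,
\[
T_{\Theta}(\mathbb{A},z)=\int_{\Omega_C^e}\Theta(u)\Big(\int_{\rho_{z+C}(u)}^{\rho_E(u)}r^{\,n-1-q}\,dr\Big)du,
\]
which equals $\frac{1}{n-q}\int_{\Omega_C^e}\Theta(u)\big(\rho_E^{\,n-q}(u)-\rho_{z+C}^{\,n-q}(u)\big)\,du$ when $q\neq n$ and $\int_{\Omega_C^e}\Theta(u)\big(\log\rho_E(u)-\log\rho_{z+C}(u)\big)\,du$ when $q=n$.

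The next step is to justify splitting each of these differences into two separately convergent integrals over $\Omega_C^e$. For $n-1<q<n$ each summand is $(n-q)$ times the weighted co-volume of a $C$-pseudo-cone ($E$, resp. $z+C$), hence finite by \cite[Lemma~7]{Schneider-A_weighted_Minkowski_theorem}. For $q>n$ one has $n-q<0$ while $\rho_E\geqslant d(o,E)>0$ and $\rho_{z+C}\geqslant d(o,z+C)>0$, so $\rho_E^{\,n-q}$ and $\rho_{z+C}^{\,n-q}$ are bounded on $\Omega_C^e$ and $\Theta$ is bounded on the compact set $\mathbb{S}^{n-1}\cap C$ by \eqref{upper-lower-estimate-of-weight}; thus both integrals are finite. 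For $q=n$ the same lower bounds on $\rho$ control the negative parts of $\log\rho$, while $\rho_E(u)\leqslant\rho_{x_0+C}(u)$ for $x_0\in\text{int}\,E$, together with $\rho_{z+C}(u),\rho_{x_0+C}(u)\leqslant c\,d(u,\partial\Omega_C)^{-1}$, controls the positive parts; since $s\mapsto-\log s$ is integrable near $s=0$, both $\int_{\Omega_C^e}\Theta(u)\log\rho_E(u)\,du$ and $\int_{\Omega_C^e}\Theta(u)\log\rho_{z+C}(u)\,du$ are finite. So in every case $T_{\Theta}(\mathbb{A},z)$ is a difference of two finite integrals.

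I would then apply the co-area formula \eqref{The-co-area-formula-in-convex-geometry} to each summand, mimicking the manipulation in Lemma \ref{Lemma-Integral-formula-co-volume-surface-area-measure}: for $q\neq n$ this rewrites $\frac{1}{n-q}\int_{\Omega_C^e}\Theta(u)\rho_E^{\,n-q}(u)\,du$ as $\frac{1}{n-q}\int_{\partial_eE}\Theta(x)\overline{h}_E(\nu_E(x))\,d\mathcal{H}^{n-1}(x)=\frac{1}{n-q}\int_{\Omega_{C^\circ}^e}\overline{h}_E(v)\,dS_{n-1}^{\Theta}(E,v)$ using \eqref{weighted-surface-area-measure-equivalent-formula}, and similarly for $z+C$; for $q=n$ the factor $\rho^{n}(u)=|x|^{n}$ in the co-area Jacobian is absorbed by $(-q)$-homogeneity together with $\log\rho(u)=\log|x|$, yielding the boundary integrals $\int_{\partial_eE}\Theta(x)\overline{h}_E(\nu_E(x))\log|x|\,d\mathcal{H}^{n-1}(x)$ and $\int_{z+\partial C}\Theta(x)\overline{h}_{z+C}(\nu_{z+C}(x))\log|x|\,d\mathcal{H}^{n-1}(x)$. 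Finally, since $C$ is a cone, $h_C$ vanishes on $\partial C^{\circ}$ and $\nu_{z+C}(x)=\nu_C(x-z)\in\partial C^{\circ}\cap\mathbb{S}^{n-1}=\partial\Omega_{C^\circ}$ for $\mathcal{H}^{n-1}$-a.e. $x\in z+\partial C=\partial_e(z+C)$, whence $\overline{h}_{z+C}(\nu_{z+C}(x))=-\langle x,\nu_{z+C}(x)\rangle=-\langle z,\nu_{z+C}(x)\rangle$; substituting this and tracking the sign of $-\tfrac{1}{n-q}$ gives both asserted identities, the second term in the $q\neq n$ case being rewritten as $\frac{1}{n-q}\int_{\partial\Omega_{C^\circ}}\langle z,v\rangle\,dS_{n-1}^{\Theta}(z+C,v)$ by the push-forward definition of $S_{n-1}^{\Theta}(z+C,\cdot)$.

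The main obstacle is the splitting step of the second paragraph. For $q\geqslant n$ the weighted co-volumes $\overline{V}_{\Theta}(E)$ and $\overline{V}_{\Theta}(z+C)$ are both infinite (Lemma \ref{Finiteness-The-flaw-integral-I0E-infinite}), so one cannot simply write $T_{\Theta}(\mathbb{A},z)=\overline{V}_{\Theta}(E)-\overline{V}_{\Theta}(z+C)$ and invoke Lemma \ref{Lemma-Integral-formula-co-volume-surface-area-measure}; the whole content is that the common origin-singularity of the two co-volumes cancels on $(z+C)\setminus E$, which lies at positive distance from $o$, and this must be extracted quantitatively through the bounds on $\rho_E$ and $\rho_{z+C}$ above (and, when $q=n$, through the logarithmic integrability near $\partial\Omega_C$). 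A subsidiary point requiring care is that $\Omega_C^e$ serves simultaneously as a full-measure radial domain for $E$ and for $z+C$ and that the co-area and push-forward machinery of Section 2 applies verbatim to the pseudo-cone $z+C$, which it does by Lemma \ref{locally-Lipschitz-radial-map-of-C-pseudo-cone}.
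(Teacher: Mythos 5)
Your proposal is correct and follows essentially the same route as the paper's proof: radial (polar-coordinate) slicing of $(z+C)\setminus E$ between $\rho_{z+C}$ and $\rho_E$, splitting the resulting spherical integral, converting each piece to a boundary integral via the co-area formula \eqref{The-co-area-formula-in-convex-geometry} and the push-forward identity \eqref{weighted-surface-area-measure-equivalent-formula}, and using $h_{z+C}(v)=\langle z,v\rangle$ on $\partial\Omega_{C^\circ}$ to fix the sign of the second term. The only difference is that you explicitly justify the finiteness of each split summand (which the paper performs silently), a worthwhile but not essential addition.
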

\begin{proof}
If $n\neq q>n-1$, since $z\neq o$, we have $\Omega_C^e=\Omega_C$ and $\partial_eE=\partial E$. Thus,
\begin{equation*}
\begin{aligned}
T_{\Theta}(\mathbb{A},z)&=\int_{(z+C)\setminus E}\Theta(x)\,d\mathcal{H}^n(x)
=\int_{\Omega_C^e}\int_{\rho_{z+C}(u)}^{\rho_E(u)}\Theta(ru)r^{n-1}\,drdu\\
&=\int_{\Omega_C^e}\Theta(u)\int_{\rho_{z+C}(u)}^{\rho_E(u)}r^{n-1-q}\,drdu
=\frac{1}{n-q}\int_{\Omega_C^e}\Theta(u)(\rho^{n-q}_E(u)-\rho^{n-q}_{z+C}(u))\,du\\
&=\frac{1}{n-q}\int_{\Omega_C^e}\Theta(u)\rho^{n-q}_E(u)\,du-
\frac{1}{n-q}\int_{\Omega_C}\Theta(u)\rho^{n-q}_{z+C}(u)\,du\\
&=\frac{1}{n-q}\int_{\partial_eE}\Theta(x)\overline{h}_E(\nu_E(x))\,d\mathcal{H}^{n-1}(x)\\
&\quad-\frac{1}{n-q}\int_{z+\partial C}\Theta(x)\overline{h}_{z+C}(\nu_{z+C}(x))\,d\mathcal{H}^{n-1}(x)\\
&=\frac{1}{n-q}\int_{\Omega_{C^\circ}^e}\overline{h}_E(v)\,dS_{n-1}^\Theta(E,v)+
\frac{1}{n-q}\int_{\partial\Omega_{C^\circ}}\langle z,v\rangle\,dS_{n-1}^\Theta(z+C,v).
\end{aligned}
\end{equation*}
If $q=n$, we have
\begin{equation*}
\begin{aligned}
T_{\Theta}(\mathbb{A},z)&=\int_{\Omega_C^e}\Theta(u)\int_{\rho_{z+C}(u)}^{\rho_E(u)}\frac{1}{r}\,drdu\\
&=\int_{\Omega_C^e}\Theta(u)(\log\rho_E(u)-\log\rho_{z+C}(u))\,du\\
&=\int_{\Omega_C^e}\Theta(u)\log\rho_E(u)\,du-\int_{\Omega_C}\Theta(u)\log\rho_{z+C}(u)\,du\\
&=\int_{\partial_eE}\Theta(x)\overline{h}_E(\nu_E(x))\log|x|\,d\mathcal{H}^{n-1}(x)\\
&\quad+\int_{z+\partial C}\Theta(x)\langle z,\nu_{z+C}(x)\rangle\log|x|\,d\mathcal{H}^{n-1}(x).
\end{aligned}
\end{equation*}
\end{proof}

According to the definition of the $r$-th dual volume $\widetilde{V}_r(\cdot)$ given in \cite{Li-Ye-Zhu-The_dual_Minkowski_problem}, the formula \eqref{Polar-coordinates-co-volume} implies that $\widetilde{V}_r(\cdot)$ is finite, as follows.

\begin{corollary}
For any $r\in(-\infty,0)\cup(0,1)$, the $r$-th dual volume $\widetilde{V}_r(E)$ is finite for every $C$-pseudo-cone $E$.
\end{corollary}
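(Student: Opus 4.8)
The plan is to handle the two ranges $r<0$ and $0<r<1$ by different means, since the polar-coordinate identity \eqref{Polar-coordinates-co-volume} only cooperates in the second range.

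For $r<0$ the finiteness will be elementary. Since $o\notin E$ and $E$ is closed, $\delta:=d(o,E)>0$, and any point $\rho_E(u)u\in E$ has norm $\rho_E(u)$, so $\rho_E(u)\geqslant\delta$ for every $u\in\Omega_C^e$. As $r<0$ this gives $\rho_E^r(u)\leqslant\delta^{r}$, hence
\[
\widetilde V_r(E)=\frac1n\int_{\Omega_C^e}\rho_E^r(u)\,du\leqslant\frac{\delta^{r}}{n}\,\mathcal H^{n-1}(\mathbb S^{n-1})<+\infty .
\]
Here one cannot route through \eqref{Polar-coordinates-co-volume}: the weight $\Theta(x)=|x|^{r-n}$ has exponent $q=n-r>n$, outside the admissible range, and indeed $\overline V_\Theta(E)$ is infinite in that regime.

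For $0<r<1$ the idea is to realise $\widetilde V_r(E)$ as a genuine weighted co-volume and then cite the finiteness theorem. I would take the weight $\Theta(x)=|x|^{r-n}$ on $C\setminus\{o\}$; it is positive, continuous and $(-(n-r))$-homogeneous, with associated exponent $q=n-r\in(n-1,n)\subset[0,n)$, so Lemma \ref{Lemma-Integral-formula-co-volume-surface-area-measure} is available. Since $\Theta\equiv1$ on $\mathbb S^{n-1}$, formula \eqref{Polar-coordinates-co-volume} reads
\[
\overline V_\Theta(E)=\frac{1}{n-(n-r)}\int_{\Omega_C^e}\Theta(u)\,\rho_E^{\,n-(n-r)}(u)\,du=\frac1r\int_{\Omega_C^e}\rho_E^r(u)\,du=\frac nr\,\widetilde V_r(E).
\]
By the case $n-1<q<n$ of Theorem \ref{Theorem-finite-table} (proved by Schneider \cite{Schneider-A_weighted_Minkowski_theorem}), $\overline V_\Theta(E)$ is finite for every $C$-pseudo-cone $E$, and therefore $\widetilde V_r(E)=\frac rn\,\overline V_\Theta(E)<+\infty$. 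Combining the two cases yields the corollary.

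I do not anticipate any serious obstacle; the only delicate point is the homogeneity bookkeeping, namely that the dual exponent $r$ matches the weight exponent $q=n-r$ and that one must verify $q\in[0,n)$ before invoking \eqref{Polar-coordinates-co-volume}. This is exactly why the range $r\geqslant1$ is excluded from the statement: there $q=n-r\leqslant n-1$, the weighted co-volume need no longer be finite, and $\widetilde V_r(E)$ can genuinely diverge.
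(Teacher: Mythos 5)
Your proof is correct and takes essentially the same route as the paper's: the case $r<0$ is handled by the trivial bound $\rho_E(u)\geqslant d(o,E)>0$, and the case $0<r<1$ is reduced, via \eqref{Polar-coordinates-co-volume}, to the finiteness of a weighted co-volume with exponent $q=n-r\in(n-1,n)$, i.e.\ to Schneider's finiteness result recorded in Theorem \ref{Theorem-finite-table}. The only cosmetic difference is that you specialize to the radial weight $|x|^{r-n}$, turning the comparison into the exact identity $\widetilde V_r(E)=\tfrac{r}{n}\overline V_\Theta(E)$, whereas the paper keeps the ambient general weight $\Theta$ and instead bounds $\int_{\Omega_C^e}\rho_E^{n-q}\,du$ by $\tfrac{n-q}{m_\Theta}\overline V_\Theta(E)$ using \eqref{upper-lower-estimate-of-weight}.
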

\begin{proof}
Let $r<0$. Then we have
\begin{equation*}
\widetilde{V}_r(E)=\frac{1}{n}\int_{\Omega_C^e}\rho^r_E(u)\,du
\leqslant\frac{1}{n}\big(\min_{u\in\Omega_C^e}\rho_E(u)\big)^r\int_{\Omega_C^e}du<+\infty.
\end{equation*}
Now, let $r\in(0,1)$ with $r=n-q$, so $q\in(n-1,n)$.
By the estimate \eqref{upper-lower-estimate-of-weight}, the formula \eqref{Polar-coordinates-co-volume}, and the Cauchy-Schwarz inequality, we have
\begin{equation*}
\begin{aligned}
\overline{V}_{\Theta}(E)&=\frac{1}{n-q}\int_{\Omega_C^e}\Theta(u)\rho^{n-q}_E(u)\,du\\
&\geqslant\frac{m_{\Theta}}{n-q}\int_{\Omega_C^e}\langle u,\mathfrak{u}\rangle^{-q}\rho^{n-q}_E(u)\,du\\
&\geqslant\frac{m_{\Theta}}{n-q}\int_{\Omega_C^e}\rho^{n-q}_E(u)\,du.
\end{aligned}
\end{equation*}
Thus, according to the finiteness of the weighted co-volume, we have
\begin{equation*}
\widetilde{V}_r(E)=\widetilde{V}_{n-q}(E)=\frac{1}{n}\int_{\Omega_C^e}\rho^{n-q}_E(u)\,du
\leqslant\frac{n-q}{n\,m_{\Theta}}\overline{V}_{\Theta}(E)<+\infty.
\end{equation*}
\end{proof}

\begin{lemma}
Let $E$ be a $C$-pseudo-cone with its decomposition $E=z+\mathbb{A}$ for the $C$-asymptotic set $\mathbb{A}$ and the starting point $z\neq o$. If $q>0$, we have the following convolution formula:
\begin{equation}\label{The-convolution-formula}
T_{\Theta}(\mathbb{A},z)+V_{\Theta}(E)=\boldsymbol{\chi}_{-C}\ast\Theta\,(z).
\end{equation}
In particular, if $q>n$, we have
\begin{equation}\label{The-convolution-formula-q-big-than-n}
T_{\Theta}(\mathbb{A},z)=\boldsymbol{\chi}_{-C}\ast\Theta\,(z)-V_{\Theta}(z+\mathbb{A}),\,z\in C.
\end{equation}
\end{lemma}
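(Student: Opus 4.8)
The plan is to unwind the convolution on the right-hand side of \eqref{The-convolution-formula} into an ordinary weighted integral over the shifted cone $z+C$, and then to split that integral using the inclusion $E\subset z+C$. Throughout, $o\neq z\in C$.

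First I would record two elementary observations. Since $C$ is a convex cone and $z\in C$, closure under addition gives $z+C\subset C$; and if $o\in z+C$ then $-z\in C$, so both $z$ and $-z$ would lie in the line-free cone $C$, forcing $z=o$, a contradiction. Hence $z+C\subset C\setminus\{o\}$, so $\Theta$ is continuous and strictly positive on all of $z+C$; moreover $z+C$ is itself a $C$-pseudo-cone, and $\Theta$ is bounded on every bounded subset of $z+C$ (each such subset being bounded away from $o$). Now, in the convolution integral
\[
(\boldsymbol{\chi}_{-C}\ast\Theta)(z)=\int_{\mathbb{R}^n}\boldsymbol{\chi}_{-C}(w)\,\Theta(z-w)\,d\mathcal{H}^n(w),
\]
the factor $\boldsymbol{\chi}_{-C}(w)$ vanishes unless $w\in-C$, in which case $z-w\in z+C\subset C\setminus\{o\}$; thus $\Theta(z-w)$ is unambiguous there, and the substitution $x=z-w$ (a reflection composed with a translation, hence $\mathcal{H}^n$-preserving, and mapping $-C$ bijectively onto $z+C$) yields
\[
(\boldsymbol{\chi}_{-C}\ast\Theta)(z)=\int_{z+C}\Theta(x)\,d\mathcal{H}^n(x)=V_{\Theta}(z+C).
\]

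Next I would invoke that $z$ is the $C$-starting point of $E$, equivalently that $E=z+\mathbb{A}$ for the $C$-asymptotic set $\mathbb{A}$, so that $E\subset z+C$ by Definition \ref{Definition-C-starting-point}. Since $q>0$, the set function $\eta\mapsto\int_\eta\Theta\,d\mathcal{H}^n$ is a positive Borel measure (the measure with continuous density $\Theta$), so its additivity on the disjoint decomposition $z+C=\big((z+C)\setminus E\big)\cup E$ gives, as an identity in $[0,+\infty]$,
\[
V_{\Theta}(z+C)=\int_{(z+C)\setminus E}\Theta\,d\mathcal{H}^n+\int_{E}\Theta\,d\mathcal{H}^n=T_{\Theta}(\mathbb{A},z)+V_{\Theta}(E),
\]
which is exactly \eqref{The-convolution-formula}. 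For the second assertion, let $q>n$: then Lemma \ref{Finiteness-weighted volume} gives $V_{\Theta}(E)=V_{\Theta}(z+\mathbb{A})<+\infty$, and Lemma \ref{Finiteness-asymptotic-weighted-co-volume} gives $T_{\Theta}(\mathbb{A},z)<+\infty$, whence $(\boldsymbol{\chi}_{-C}\ast\Theta)(z)=V_{\Theta}(z+C)$ is finite as well, and one may subtract $V_{\Theta}(z+\mathbb{A})$ from both sides to obtain \eqref{The-convolution-formula-q-big-than-n}.

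I do not expect a genuine obstacle; the only two points requiring care are already addressed above. First, one must check that the convolution integral is meaningful, i.e. that $\Theta$ is only ever evaluated on $z+C\subset C\setminus\{o\}$ (done), where it is locally integrable, being bounded near $z$ and integrable at infinity precisely by the finiteness results of Section \ref{section-finiteness}. Second, one must avoid any hidden $\infty-\infty$ cancellation when splitting the integral; this is automatic because $q>0$ makes $\Theta$ strictly positive, so every integral appearing lies in $[0,+\infty]$ and additivity holds unconditionally. (For $0<q\leqslant n$ both sides of \eqref{The-convolution-formula} are simply $+\infty$, so the formula carries content only when $q>n$.)
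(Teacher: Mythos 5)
Your argument is correct and essentially the paper's own: both proofs unwind $\boldsymbol{\chi}_{-C}\ast\Theta\,(z)$ via the identity $\boldsymbol{\chi}_{z+C}(x)=\boldsymbol{\chi}_{-C}(z-x)$ into $V_{\Theta}(z+C)$ and then split the integral over $z+C$ into $E$ and $(z+C)\setminus E$, your version merely doing this uniformly as an identity in $[0,+\infty]$ rather than treating $q>n$ and $q\leqslant n$ separately. The only point you omit is that \eqref{The-convolution-formula-q-big-than-n} is asserted for all $z\in C$, and the paper checks the singular value $z=o$ separately (there $T_{\Theta}(\mathbb{A},o)=+\infty$ by Lemma \ref{Finiteness-asymptotic-weighted-co-volume} and $\boldsymbol{\chi}_{-C}\ast\Theta\,(o)=V_{\Theta}(C)=+\infty$, while $V_{\Theta}(\mathbb{A})$ is finite, so the formula persists); since you fix $z\neq o$ throughout, one added line of this kind is needed if \eqref{The-convolution-formula-q-big-than-n} is to be read on all of $C$.
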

\begin{proof}
Suppose $q>n$. For $z\neq o$, both $V_{\Theta}(E)$ and $T_{\Theta}(\mathbb{A},z)$ are finite. Noting that
\begin{equation*}
\boldsymbol{\chi}_{z+C}(x)=\boldsymbol{\chi}_C(x-z)=\boldsymbol{\chi}_{-C}(z-x),\,x\in\mathbb{R}^n,
\end{equation*}
we have
\begin{equation*}
\begin{aligned}
T_{\Theta}(\mathbb{A},z)&=\int_{\mathbb{R}^n}\boldsymbol{\chi}_{(z+C)\setminus E}(x)\,\Theta(x)\,d\mathcal{H}^n(x)\\
&=\int_{\mathbb{R}^n}\big(\boldsymbol{\chi}_{z+C}(x)-\boldsymbol{\chi}_E(x)\big)\Theta(x)\,d\mathcal{H}^n(x)\\
&=\int_{\mathbb{R}^n}\boldsymbol{\chi}_{z+C}(x)\Theta(x)\,d\mathcal{H}^n(x)-\int_{\mathbb{R}^n}\boldsymbol{\chi}_E(x)\Theta(x)
\,d\mathcal{H}^n(x)\\
&=\int_{\mathbb{R}^n}\boldsymbol{\chi}_{-C}(z-x)\Theta(x)\,d\mathcal{H}^n(x)-\int_E\Theta(x)\,d\mathcal{H}^n(x)\\
&=\boldsymbol{\chi}_{-C}\ast\Theta\,(z)-V_{\Theta}(E),
\end{aligned}
\end{equation*}
where the weight function $\Theta$ can be extended to the whole space $\mathbb{R}^n$, making the convolution $\boldsymbol{\chi}_{-C}\ast\Theta$ well-defined.

For $z=o$, we have $T_{\Theta}(\mathbb{A},o)=+\infty$ and
\begin{equation*}
\boldsymbol{\chi}_{-C}\ast\Theta\,(o)=\int_C\Theta(x)\,d\mathcal{H}^n(x)>\int_{C\cap B^n_2}\Theta(x)\,d\mathcal{H}^n(x)=+\infty.
\end{equation*}
Thus, \eqref{The-convolution-formula-q-big-than-n} holds. Since $V_{\Theta}(E)$ is finite, \eqref{The-convolution-formula} also holds for $q>n$.

Now, suppose $q\leqslant n$. In this case, $V_{\Theta}(E)=+\infty$. Since $z+C$ is also a $C$-pseudo-cone, we have
\begin{equation*}
\boldsymbol{\chi}_{-C}\ast\Theta\,(z)=\int_{z+C}\Theta(x)\,d\mathcal{H}^n(x)=V_{\Theta}(z+C)=+\infty.
\end{equation*}
Thus, \eqref{The-convolution-formula} holds trivially.
\end{proof}

It is easy to verify the following property from the definition of $T_\Theta$, we omit the proof.
\begin{lemma}
Let $E$ be a $C$-pseudo-cone with its decomposition $E=z+\mathbb{A}$, where $\mathbb{A}$ is the $C$-asymptotic set and $z\neq o$. If $q>n$, then for any $t>0$, we have
\begin{equation*}
T_\Theta(t\mathbb{A},tz)=t^{n-q}T_\Theta(\mathbb{A},z).
\end{equation*}
\end{lemma}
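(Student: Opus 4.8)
The plan is to reduce the scaling identity to a single change of variables in the defining integral
\[
T_\Theta(\mathbb{A},z)=V_\Theta\big((z+C)\setminus E\big)=\int_{(z+C)\setminus E}\Theta(x)\,d\mathcal{H}^n(x),
\]
exploiting the $(-q)$-homogeneity of $\Theta$ together with the fact that the dilation $x\mapsto tx$ is a linear bijection of $\mathbb{R}^n$ that fixes the cone $C$.

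First I would verify that the left-hand side is meaningful, i.e.\ that $t\mathbb{A}$ is again a $C$-asymptotic set and that $tz$ is its $C$-starting point. Since $C$ and $\partial C$ are cones we have $tC=C$ and $t\,\partial C=\partial C$, so the recession cone of $t\mathbb{A}$ is still $C$; and because $d(tx,\partial C)=t\,d(x,\partial C)$, the asymptotic condition \eqref{limin-formula-C-asymptotic-set} passes from $\mathbb{A}$ to $t\mathbb{A}$. As $tz\in C$, the set $tE=tz+t\mathbb{A}$ is a non-degenerated $C$-pseudo-cone, and by the uniqueness of the decomposition in Theorem \ref{Theorem-the-characteristic-of-C-pseudo-cone-extension} its $C$-starting point is exactly $tz$; hence $T_\Theta(t\mathbb{A},tz)=V_\Theta\big((tz+C)\setminus tE\big)$ is well defined, and it is finite by Lemma \ref{Finiteness-asymptotic-weighted-co-volume} since $q>n$ and $tz\neq o$.

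Next I would record the elementary set identity $(tz+C)\setminus tE=t\big((z+C)\setminus E\big)$, which follows from $tz+C=t(z+C)$ (using $tC=C$) together with the fact that a bijection commutes with set difference. Then the substitution $x=ty$, under which $d\mathcal{H}^n(x)=t^n\,d\mathcal{H}^n(y)$ and $\Theta(ty)=t^{-q}\Theta(y)$, gives
\[
T_\Theta(t\mathbb{A},tz)=\int_{(z+C)\setminus E}\Theta(ty)\,t^n\,d\mathcal{H}^n(y)
=t^{n-q}\int_{(z+C)\setminus E}\Theta(y)\,d\mathcal{H}^n(y)=t^{n-q}\,T_\Theta(\mathbb{A},z),
\]
as claimed. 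There is essentially no obstacle here; the only point deserving an explicit line is the well-definedness of $T_\Theta(t\mathbb{A},tz)$ --- that dilation preserves the class of $C$-asymptotic sets and the starting-point relation --- while the hypothesis $q>n$ serves only to keep both sides finite, so that the identity is one of real numbers rather than of extended reals.
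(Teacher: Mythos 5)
Your proof is correct and is precisely the ``easy verification from the definition'' that the paper alludes to when it omits the proof: the identity $tz+C=t(z+C)$, the bijectivity of dilation giving $(tz+C)\setminus tE=t\big((z+C)\setminus E\big)$, and the substitution $x=ty$ with the $(-q)$-homogeneity of $\Theta$ and the Jacobian factor $t^{n}$ yield the claim immediately. Your additional remarks --- that $t\mathbb{A}$ remains a $C$-asymptotic set with $C$-starting point $tz$ (so the left-hand side is well defined) and that $q>n$ only guarantees finiteness --- are accurate and, if anything, more careful than what the paper records.
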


If $\Theta$ is $C^1$-smooth function on $C\setminus\{o\}$, then we have the following variational results:
\begin{lemma}
Let $E$ be a $C$-pseudo-cone with its decomposition $E=z+\mathbb{A}$, where $\mathbb{A}$ is the $C$-asymptotic set and $z\neq o$. If $q>n$ and $\Theta$ is $C^1$-smooth function on $C\setminus\{o\}$, then for any $t>0$, we have
\begin{equation}\label{Deriative-fornula-of-asymptotic-co-volume}
t^{q+1-n}\frac{d\,T_\Theta(\mathbb{A},tz)}{dt}=(n-q)\boldsymbol{\chi}_{-C}\ast\Theta\,(z)
-\int_{z+\frac{1}{t}\mathbb{A}}\langle\nabla\Theta(x),z\rangle\,d\mathcal{H}^n(x).
\end{equation}
In particular,
\begin{equation*}
\frac{d}{dt}\bigg|_{t=1}T_\Theta(\mathbb{A},tz)=(n-q)\boldsymbol{\chi}_{-C}\ast\Theta\,(z)
-\int_E\langle\nabla\Theta(x),z\rangle\,d\mathcal{H}^n(x).
\end{equation*}
\end{lemma}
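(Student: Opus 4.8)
The plan is to differentiate the convolution identity \eqref{The-convolution-formula-q-big-than-n}. Since $tz\in C$ for every $t>0$, and all the quantities below are finite for $q>n$ and $z\neq o$ (Theorem~\ref{Theorem-finite-table}), that formula gives
\begin{equation*}
T_\Theta(\mathbb{A},tz)=\boldsymbol{\chi}_{-C}\ast\Theta\,(tz)-V_\Theta(tz+\mathbb{A}),
\end{equation*}
so it suffices to differentiate the two terms on the right separately. For the first term I would use the $(-q)$-homogeneity of $\Theta$: writing $\boldsymbol{\chi}_{-C}\ast\Theta\,(tz)=\int_{tz+C}\Theta(x)\,d\mathcal{H}^n(x)$ and substituting $x=ty$ yields the clean scaling $\boldsymbol{\chi}_{-C}\ast\Theta\,(tz)=t^{\,n-q}\,\boldsymbol{\chi}_{-C}\ast\Theta\,(z)$, whose derivative in $t$ is $(n-q)t^{\,n-q-1}\boldsymbol{\chi}_{-C}\ast\Theta\,(z)$.

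For the second term I would freeze the domain of integration by writing $V_\Theta(tz+\mathbb{A})=\int_{\mathbb{A}}\Theta(tz+y)\,d\mathcal{H}^n(y)$ and differentiate under the integral sign, obtaining
\begin{equation*}
\frac{d}{dt}\,V_\Theta(tz+\mathbb{A})=\int_{\mathbb{A}}\langle\nabla\Theta(tz+y),z\rangle\,d\mathcal{H}^n(y)=\int_{tz+\mathbb{A}}\langle\nabla\Theta(x),z\rangle\,d\mathcal{H}^n(x).
\end{equation*}
Because $\Theta$ is $(-q)$-homogeneous its gradient $\nabla\Theta$ is $(-q-1)$-homogeneous, so the substitution $x=ty$ (under which $x\in tz+\mathbb{A}$ corresponds to $y\in z+\tfrac1t\mathbb{A}$) turns the last integral into $t^{\,n-q-1}\int_{z+\frac1t\mathbb{A}}\langle\nabla\Theta(y),z\rangle\,d\mathcal{H}^n(y)$. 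Subtracting the two derivatives and multiplying through by $t^{\,q+1-n}$ gives exactly \eqref{Deriative-fornula-of-asymptotic-co-volume}; setting $t=1$, where $z+\mathbb{A}=E$, produces the displayed special case.

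The one genuinely nonroutine step is justifying the differentiation under the integral sign over the unbounded set $\mathbb{A}$. For $t$ confined to a compact neighborhood $[t_0/2,2t_0]$ of an arbitrary $t_0>0$, I would dominate $|\langle\nabla\Theta(tz+y),z\rangle|\le |z|\,|\nabla\Theta(tz+y)|$ by a constant multiple of $\langle tz+y,\mathfrak{u}\rangle^{-q-1}$, using the continuity of $|\nabla\Theta|$ on the compact slice $C(1)$ together with homogeneity, exactly as in \eqref{upper-lower-estimate-of-weight}. Since $y\in\mathbb{A}\subset C$ and $tz\in C$, one has $\langle tz+y,\mathfrak{u}\rangle\ge (t_0/2)\langle z,\mathfrak{u}\rangle>0$, which controls the potential singularity uniformly in $t$, while $\langle tz+y,\mathfrak{u}\rangle\ge\langle y,\mathfrak{u}\rangle$ together with $q+1>n$ makes the dominating function $\mathcal{H}^n$-integrable over $\mathbb{A}$ by the estimate underlying Lemma~\ref{Finiteness-weighted volume}, applied with exponent $q+1$ in place of $q$. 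Dominated convergence then legitimizes the interchange, and the finiteness of the limiting integral $\int_{z+\frac1t\mathbb{A}}\langle\nabla\Theta,z\rangle\,d\mathcal{H}^n$ follows from the same bound, since $z+\tfrac1t\mathbb{A}$ is again a non-degenerated $C$-pseudo-cone with $C$-starting point $z\neq o$.
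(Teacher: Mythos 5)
Your proposal is correct and follows essentially the same route as the paper: it differentiates the convolution identity $T_\Theta(\mathbb{A},tz)=\boldsymbol{\chi}_{-C}\ast\Theta\,(tz)-V_\Theta(tz+\mathbb{A})$ after noting the scaling $\boldsymbol{\chi}_{-C}\ast\Theta\,(tz)=t^{\,n-q}\,\boldsymbol{\chi}_{-C}\ast\Theta\,(z)$. The only difference is cosmetic: you differentiate the translated parametrization $\int_{\mathbb{A}}\Theta(tz+y)\,d\mathcal{H}^n(y)$ and then use the $(-q-1)$-homogeneity of $\nabla\Theta$ to pass to $z+\tfrac1t\mathbb{A}$, whereas the paper rescales to $t^{-q}\int_{\mathbb{A}}\Theta(z+\tfrac1t y)\,d\mathcal{H}^n(y)$ and invokes Euler's relation $\langle\nabla\Theta(x),x\rangle=-q\Theta(x)$ -- algebraically equivalent steps -- and your domination argument justifying the differentiation under the integral sign (left implicit in the paper) is sound.
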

\begin{proof}
Firstly, we observe that
\begin{equation*}
\begin{aligned}
\boldsymbol{\chi}_{-C}\ast\Theta\,(tz)=&\int_{\mathbb{R}^n}\boldsymbol{\chi}_{-C}(x)\Theta(tz-x)
\,d\mathcal{H}^n(x)\\
=&t^{-q}\int_{\mathbb{R}^n}\boldsymbol{\chi}_{-C}(x)\Theta(z-\frac{1}{t}x)\,d\mathcal{H}^n(x)\\
=&t^{n-q}\int_{\mathbb{R}^n}\boldsymbol{\chi}_{-C}(ty)\Theta(z-y)\,d\mathcal{H}^n(y)\\
=&t^{n-q}\int_{\mathbb{R}^n}\boldsymbol{\chi}_{-C}(y)\Theta(z-y)\,d\mathcal{H}^n(y)\\
=&t^{n-q}\,\boldsymbol{\chi}_{-C}\ast\Theta\,(z),
\end{aligned}
\end{equation*}
then
\begin{equation*}
\begin{aligned}
T_{\Theta}(\mathbb{A},tz)&=\boldsymbol{\chi}_{-C}\ast\Theta\,(tz)-V_{\Theta}(tz+\mathbb{A})\\
&=t^{n-q}\,\boldsymbol{\chi}_{-C}\ast\Theta\,(z)-V_{\Theta}(tz+\mathbb{A}).
\end{aligned}
\end{equation*}

On the other hand,
\begin{equation*}
\begin{aligned}
V_{\Theta}(tz+\mathbb{A})&=t^{n-q}V_\Theta(z+\frac{1}{t}\mathbb{A})
=t^{n-q}\int_{z+\frac{1}{t}\mathbb{A}}
\Theta(x)\,d\mathcal{H}^n(x)\\
&=t^{n-q}\int_{\mathbb{A}}\Theta(z+\frac{1}{t}y)\frac{1}{t^n}\,d\mathcal{H}^n(y)\\
&=t^{-q}\int_{\mathbb{A}}
\Theta(z+\frac{1}{t}y)\,d\mathcal{H}^n(y).
\end{aligned}
\end{equation*}
Taking the derivative with respect to $t$, we obtain
\begin{equation*}
\begin{aligned}
\frac{d\,T_\Theta(\mathbb{A},tz)}{dt}&=\frac{d}{dt}\big(t^{n-q}\,\boldsymbol{\chi}_{-C}\ast\Theta\,(z)\big)-
\frac{d}{dt}V_{\Theta}(tz+\mathbb{A})\\
&=(n-q)\boldsymbol{\chi}_{-C}\ast\Theta\,(z)t^{n-q-1}+qt^{-q-1}\int_{\mathbb{A}}\Theta(z+\frac{1}{t}y)\,d\mathcal{H}^n(y)\\
&\quad+t^{-q-1}\int_{\mathbb{A}}\langle\nabla\Theta(z+\frac{1}{t}y),\frac{1}{t}y\rangle\,d\mathcal{H}^n(y)\\
&=(n-q)\boldsymbol{\chi}_{-C}\ast\Theta\,(z)t^{n-q-1}+qt^{n-q-1}\int_{\frac{1}{t}\mathbb{A}}\Theta(z+x)\,d\mathcal{H}^n(x)\\
&\quad+t^{n-q-1}\int_{\frac{1}{t}\mathbb{A}}\langle\nabla\Theta(z+x),x\rangle\,d\mathcal{H}^n(x),
\end{aligned}
\end{equation*}
which implies
\begin{equation*}
t^{q+1-n}\frac{d\,T_\Theta(\mathbb{A},tz)}{dt}=(n-q)\boldsymbol{\chi}_{-C}\ast\Theta\,(z)+\int_{\frac{1}{t}\mathbb{A}}\big(q\Theta(z+x)+
\langle\nabla\Theta(z+x),x\rangle\big)\,d\mathcal{H}^n(x).
\end{equation*}

Since $\Theta$ is a $(-q)$-homogeneous function on $C\setminus\{o\}$, i.e.,
\begin{equation*}
\Theta(tx)=t^{-q}\Theta(x).
\end{equation*}
Differentiating both sides of the above formula with respect to $t$ at $t=1$, we obtain
\begin{equation*}
\langle\nabla\Theta(x),x\rangle=-q\Theta(x),
\end{equation*}
which gives
\begin{equation*}
\begin{aligned}
&\int_{\frac{1}{t}\mathbb{A}}\big(q\Theta(z+x)+\langle\nabla\Theta(z+x),x\rangle\big)\,d\mathcal{H}^n(x)\\
=&\int_{z+\frac{1}{t}\mathbb{A}}\big(q\Theta(y)+\langle\nabla\Theta(y),y-z\rangle\big)\,d\mathcal{H}^n(y)\\
=&-\int_{z+\frac{1}{t}\mathbb{A}}\langle\nabla\Theta(y),z\rangle\,d\mathcal{H}^n(y).
\end{aligned}
\end{equation*}
Thus, we have
\begin{equation*}
t^{q+1-n}\frac{d\,T_\Theta(\mathbb{A},tz)}{dt}=(n-q)\boldsymbol{\chi}_{-C}\ast\Theta\,(z)
-\int_{z+\frac{1}{t}\mathbb{A}}\langle\nabla\Theta(y),z\rangle\,d\mathcal{H}^n(y).
\end{equation*}
Let $t=1$, then
\begin{equation*}
\frac{d}{dt}\bigg|_{t=1}T_\Theta(\mathbb{A},tz)=(n-q)\boldsymbol{\chi}_{-C}\ast\Theta\,(z)
-\int_E\langle\nabla\Theta(y),z\rangle\,d\mathcal{H}^n(y).
\end{equation*}
\end{proof}

Given a fixed $C$-asymptotic set $\mathbb{A}$, $T_{\Theta}(\mathbb{A},\cdot)$ is defined as a generalized function on $C$. We can establish the following decay estimate for $T_{\Theta}(\mathbb{A},\cdot)$ as it approaches infinity.
\begin{lemma}[Decay estimate of the asymptotic weighted co-volume]\label{Decay estimate of the asymptotic weighted co-volume functinal}
Let $E$ be a $C$-pseudo-cone with its decomposition $E=z+\mathbb{A}$, where $\mathbb{A}$ is the $C$-asymptotic set and $z\neq o$. If $q>n$, then there exists a constant $M(C,\Theta,E)$, depending only on $C$, $\Theta$, and $E$, such that
\begin{equation*}
T_\Theta(\mathbb{A},tz)\leqslant M(C,\Theta,E)\,t^{n-q-1},
\end{equation*}
for sufficiently large $t$. Therefore,
\begin{equation*}
T_\Theta(\mathbb{A},tz)=o(t^{n-q})\ \mbox{as}\ t\rightarrow+\infty.
\end{equation*}
\end{lemma}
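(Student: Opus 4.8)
The plan is to reduce the statement to a one–dimensional weighted integral and to read the decay off from the asymptoticity of $\mathbb{A}$ together with the homogeneity bound \eqref{upper-lower-estimate-of-weight}. Since $tz$ is the $C$-starting point of the $C$-pseudo-cone $tz+\mathbb{A}$ (Lemma \ref{Lemma-equivalence-C-starting-point-and-C-asymptotic-set}) and $(tz+C)\setminus(tz+\mathbb{A})=tz+(C\setminus\mathbb{A})$, translation invariance of $\mathcal{H}^n$ gives
\[
T_\Theta(\mathbb{A},tz)=\int_{C\setminus\mathbb{A}}\Theta(tz+w)\,d\mathcal{H}^n(w).
\]
For $w\in C\setminus\mathbb{A}$ one has $tz+w\in C\setminus\{o\}$ and $\langle tz+w,\mathfrak{u}\rangle=t\langle z,\mathfrak{u}\rangle+\langle w,\mathfrak{u}\rangle$, so \eqref{upper-lower-estimate-of-weight} yields $\Theta(tz+w)\leqslant M_\Theta\big(t\langle z,\mathfrak{u}\rangle+\langle w,\mathfrak{u}\rangle\big)^{-q}$. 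Applying Fubini with respect to the linear functional $\langle\cdot,\mathfrak{u}\rangle$ (whose level hyperplanes meet $C$ in the slices $C(s)$), we obtain
\[
T_\Theta(\mathbb{A},tz)\leqslant M_\Theta\int_0^{+\infty}\big(t\langle z,\mathfrak{u}\rangle+s\big)^{-q}g(s)\,ds,\qquad g(s):=\mathcal{H}^{n-1}\big((C\setminus\mathbb{A})\cap C(s)\big).
\]

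The crucial point is a bound on $g(s)$. Trivially $g(s)\leqslant\mathcal{H}^{n-1}(C(s))=s^{n-1}\mathcal{H}^{n-1}(C(1))$. To sharpen this for large $s$, I would first note that the asymptoticity \eqref{limin-formula-C-asymptotic-set} of $\mathbb{A}$ implies: for every $\varepsilon>0$ there is $R_\varepsilon$ with
\[
(C\setminus\mathbb{A})\cap\{\,x:\ |x|>R_\varepsilon\,\}\subset\{\,x:\ d(x,\partial C)<\varepsilon\,\}.
\]
Indeed, for $y\in C\setminus\mathbb{A}$ with $u=y/|y|\in\Omega_C$, the first intersection $x_\ast=\rho_\mathbb{A}(u)u$ of the ray $l_{o,u}$ with $\mathbb{A}$ has $|x_\ast|>|y|$, and since $d(ru,\partial C)=r\,d(u,\partial C)$ is increasing in $r$ we get $d(y,\partial C)<d(x_\ast,\partial C)$, which is $<\varepsilon$ once $|y|>R_\varepsilon$ by \eqref{limin-formula-C-asymptotic-set}; the case $u\in\partial\Omega_C$ is trivial. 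Since $|x|\geqslant s$ on $C(s)$ and $C(s)=sC(1)$, and since $C(1)$ is a compact convex body with $-\mathfrak{u}$ interior to $C^\circ$, the elementary collar estimate $\mathcal{H}^{n-1}(\{y\in C(1):d(y,\partial C)<\delta\})\leqslant c\,\delta$ (for small $\delta$) gives $g(s)\leqslant c\,\varepsilon\,s^{n-2}$ for all $s\geqslant R'_\varepsilon$, with $R'_\varepsilon$ depending only on $C$, $\mathbb{A}$ and $\varepsilon$.

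With these two bounds I split the integral at $R'_\varepsilon$: the part on $[0,R'_\varepsilon]$ is at most $\mathcal{H}^{n-1}(C(1))\,(R'_\varepsilon)^n\,(t\langle z,\mathfrak{u}\rangle)^{-q}/n=O(t^{-q})$, while on $[R'_\varepsilon,+\infty)$ the substitution $s=\sigma\,t\langle z,\mathfrak{u}\rangle$ turns it into $c\,\varepsilon\,(t\langle z,\mathfrak{u}\rangle)^{\,n-1-q}\int_0^{+\infty}(1+\sigma)^{-q}\sigma^{\,n-2}\,d\sigma$, and the last integral is finite because $q>n-1$. Fixing $\varepsilon=1$ and using $t^{-q}\leqslant t^{\,n-q-1}$ for $t\geqslant1$ gives $T_\Theta(\mathbb{A},tz)\leqslant M(C,\Theta,E)\,t^{\,n-q-1}$ for large $t$, and dividing by $t^{\,n-q}$ proves $T_\Theta(\mathbb{A},tz)=o(t^{\,n-q})$. (Alternatively one may start from the convolution identity \eqref{The-convolution-formula-q-big-than-n} and dominated convergence to get $o(t^{n-q})$ directly, but the refined rate $t^{n-q-1}$ still rests on the same slice bound.) The main obstacle is precisely this slice bound on $g(s)$: converting the qualitative asymptoticity \eqref{limin-formula-C-asymptotic-set} — equivalently, that $o$ is the $C$-starting point of $\mathbb{A}$ (Corollary \ref{C-asymptotic-set-C-starting-point-support-function}) — into quantitative control on the $(n-1)$-measure of the collar $(C\setminus\mathbb{A})\cap C(s)$; once $g$ is controlled, the remaining estimates are improper integrals of the type already handled in Section \ref{section-finiteness}.
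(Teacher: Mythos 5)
Your argument is correct and reaches the stated bound, but the heart of it is carried by a different estimate than in the paper. Both proofs share the same skeleton: bound $\Theta$ by $M_\Theta\langle\cdot,\mathfrak{u}\rangle^{-q}$ via \eqref{upper-lower-estimate-of-weight} and use Fubini over the slices $C(s)$ to reduce everything to a one-dimensional integral of $(ts_0+s)^{-q}$ against the slice areas, the whole point being to improve the trivial slice bound $O(s^{n-1})$ to $O(s^{n-2})$. The paper gets this improvement without any collar argument: it fixes an interior point $x_0\in\mathbb{A}$, uses $x_0+C\subset\mathbb{A}$ to sandwich each slice of $(tz+C)\setminus(tz+\mathbb{A})$ between cross-sections of two translated cones, so the slice area is at most $\mathcal{H}^{n-1}(C\cap C(1))\big((s-ts_0)^{n-1}-(s-ts_0-\bar{s})^{n-1}\big)$ with $\bar{s}=\langle\mathfrak{u},x_0\rangle$, and then finishes by binomial expansion, L'H\^{o}pital and the squeeze theorem. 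You instead quantify the asymptoticity \eqref{limin-formula-C-asymptotic-set} (via the correct radial-function/monotonicity argument) into the inclusion of $(C\setminus\mathbb{A})\cap C(s)$ in an $\varepsilon$-collar of $\partial C$, and use a collar-measure bound on $C(1)$ plus a clean substitution to a Beta-type integral. Each route has its advantages: the paper's sandwich is completely self-contained, with an explicit constant, and needs nothing beyond the binomial theorem and elementary improper integrals; your collar route uses the $C$-asymptotic structure more intrinsically and in fact proves more, since $\varepsilon>0$ is arbitrary your estimate gives $\limsup_{t\to+\infty}t^{\,q+1-n}T_\Theta(\mathbb{A},tz)\leqslant c\,\varepsilon\,s_0^{\,n-1-q}\int_0^{+\infty}(1+\sigma)^{-q}\sigma^{n-2}\,d\sigma$ for every $\varepsilon$, i.e. $T_\Theta(\mathbb{A},tz)=o(t^{\,n-q-1})$, which is strictly stronger than the stated $O(t^{\,n-q-1})$. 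The one step you should write out rather than assert is the collar estimate $\mathcal{H}^{n-1}(\{y\in C(1):d(y,\partial C)<\delta\})\leqslant c\,\delta$: it requires comparing $d(\cdot,\partial C)$ with the in-hyperplane distance to the relative boundary of $C(1)$ (using that unit normals $v$ of $C$ satisfy $\langle v,\mathfrak{u}\rangle\leqslant-c_0<0$, and handling $v$ near $-\mathfrak{u}$, where $|\langle y,v\rangle|$ is bounded below on $C(1)$), followed by the standard inner-parallel-body bound for the convex body $C(1)$; this is routine convexity but not free, and it is exactly the ingredient the paper's sandwich argument avoids.
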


\begin{proof}
Let $f(t)=T_\Theta(\mathbb{A},tz)$. We choose a point $x_0\in\text{int}\,\mathbb{A}$, and denote $s_0=\langle\mathfrak{u},z\rangle$ and $\bar{s}=\langle\mathfrak{u},x_0\rangle$. Similar to Lemma \ref{Finiteness-asymptotic-weighted-co-volume}, we can derive the following estimate by the Fubini theorem, the formula \eqref{upper-lower-estimate-of-weight} and the binomial expansion theorem:

\begin{align}
0\leqslant f(t)&=\int_{tz+C\setminus\mathbb{A}}\Theta(x)\,d\mathcal{H}^n(x)\nonumber\\
&=\int_{ts_0}^{+\infty}\int_{(tz+C\setminus\mathbb{A})\cap C(s)}\Theta(x)\,d\mathcal{H}^{n-1}(x)\,ds\nonumber\\
&\leqslant M_{\Theta}\int_{ts_0}^{+\infty}s^{-q}\int_{(tz+C\setminus\mathbb{A})\cap C(s)}\,d\mathcal{H}^{n-1}(x)\,ds\nonumber\\
&\leqslant M_{\Theta}\mathcal{H}^{n-1}(C\cap C(1))\int_{ts_0}^{+\infty}s^{-q}\big((s-ts_0)^{n-1}-(s-ts_0-\bar{s})^{n-1}\big)\,ds
\nonumber\\
&=-\sum_{i=1}^{n-1}\dbinom{n-1}{i}M_{\Theta}\mathcal{H}^{n-1}(C\cap C(1))(-\bar{s})^i\int_{ts_0}^{+\infty}\frac{(s-ts_0)^{n-1-i}}{s^q}
\,ds.\label{estimate-of-f-t-infty}
\end{align}

Since $s_0>0$ and $q-n+1+i>1$, the last terms of $(q-n+1+i)$-singular integral converge to zero, i.e.,
\begin{equation*}
\begin{aligned}
0\leqslant\int_{ts_0}^{+\infty}\frac{(s-ts_0)^{n-1-i}}{s^q}\,ds
\leqslant\int_{ts_0}^{+\infty}\frac{1}{s^{q-n+1+i}}\,ds\,\rightarrow0\ \,
\text{as}\ t\rightarrow+\infty.
\end{aligned}
\end{equation*}
By the L'Hopital's rule, we have
\begin{equation*}
\begin{aligned}
\lim_{t\rightarrow+\infty}\frac{1}{t^{n-q-1}}\int_{ts_0}^{+\infty}\frac{1}{s^{q-n+1+i}}\,ds
&=\lim_{t\rightarrow+\infty}\frac{-s_0(ts_0)^{n-q-1-i}}{(n-q-1)t^{n-q-2}}\\
&=\left\{
\begin{aligned}
&0, & i=2,\cdots,n-1,\\
&\frac{s_0^{n-q-1}}{q+1-n}, & i=1.
\end{aligned}
\right.
\end{aligned}
\end{equation*}
Thus, by the Squeeze Theorem, we have

\begin{align}
&\lim_{t\rightarrow+\infty}\frac{1}{t^{n-q-1}}\int_{ts_0}^{+\infty}\frac{(s-ts_0)^{n-1-i}}{s^q}\,ds=0,\,i=2,\cdots,n-1,
\label{squeeze-limit-1}\\
&\limsup_{t\rightarrow+\infty}\frac{1}{t^{n-q-1}}\int_{ts_0}^{+\infty}\frac{(s-ts_0)^{n-2}}{s^q}\,ds\leqslant\frac{s_0^{n-q-1}}{q+1-n}.
\label{squeeze-limit-2}
\end{align}

Using the estimates \eqref{estimate-of-f-t-infty}, \eqref{squeeze-limit-1}, and \eqref{squeeze-limit-2}, and letting $t\rightarrow+\infty$, we have
\begin{equation*}
\begin{aligned}
\limsup_{t\rightarrow+\infty}\frac{f(t)}{t^{n-q-1}}\leqslant\dbinom{n-1}{1}M_{\Theta}\mathcal{H}^{n-1}(C\cap C(1))
\frac{\bar{s}s_0^{n-q-1}}{q+1-n}\triangleq M(C,\Theta,\mathbb{A},z).
\end{aligned}
\end{equation*}
Thus, we conclude that $f(t)\leqslant M(C,\Theta,\mathbb{A},z)t^{n-q-1}$ for sufficiently large $t$.
\end{proof}

Using this decay estimate, we obtain the following imperceptible formula:
\begin{theorem}\label{Formula-convolution-and-driection-derivative-of-Theta}
Let $\Theta$ be $C^1$-smooth function on $C\setminus\{o\}$ with $q>n$. For any $o\neq z\in C$, the following formula holds:
\begin{equation*}
\boldsymbol{\chi}_{-C}\ast\Theta\,(z)=\frac{1}{n-q}\int_{z+C}\frac{\partial\Theta}{\partial z}(x)\,d\mathcal{H}^n(x).
\end{equation*}
\end{theorem}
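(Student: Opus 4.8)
The plan is to realize both sides of the identity as the value at $t=1$ of the one-parameter family
\[
g(t):=\boldsymbol{\chi}_{-C}\ast\Theta\,(tz),\qquad t>0,
\]
and to compute $g'(1)$ in two different ways.

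\emph{First computation (homogeneity side).} Unwinding the convolution and substituting $y=-x$,
\[
g(t)=\int_{\mathbb{R}^n}\boldsymbol{\chi}_{-C}(x)\,\Theta(tz-x)\,d\mathcal{H}^n(x)=\int_{C}\Theta(tz+y)\,d\mathcal{H}^n(y)=V_\Theta(tz+C),
\]
and $tz+C$ is a $C$-pseudo-cone (it avoids $o$ because $z\neq o$ and $C$ is pointed), so by Lemma~\ref{Finiteness-weighted volume} this quantity is finite precisely because $q>n$. Applying the scaling $y\mapsto ty$ (legitimate since $C$ is a cone) together with the $(-q)$-homogeneity of $\Theta$ gives $g(t)=t^{n}\int_C\Theta\bigl(t(z+w)\bigr)\,d\mathcal{H}^n(w)=t^{\,n-q}g(1)$, hence $g$ is smooth on $(0,\infty)$ and
\[
g'(1)=(n-q)\,g(1)=(n-q)\,\boldsymbol{\chi}_{-C}\ast\Theta\,(z).
\]

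\emph{Second computation (differentiation under the integral).} Starting from $g(t)=\int_{C}\Theta(tz+y)\,d\mathcal{H}^n(y)$ I would differentiate under the integral sign to get $g'(t)=\int_{C}\langle\nabla\Theta(tz+y),z\rangle\,d\mathcal{H}^n(y)$, then put $t=1$ and substitute $x=z+y$:
\[
g'(1)=\int_{C}\langle\nabla\Theta(z+y),z\rangle\,d\mathcal{H}^n(y)=\int_{z+C}\frac{\partial\Theta}{\partial z}(x)\,d\mathcal{H}^n(x).
\]
Comparing the two expressions for $g'(1)$ and dividing by $n-q\neq0$ yields the asserted formula.

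\emph{The main obstacle} is justifying the differentiation under the integral over the unbounded domain $C$. For $t$ in a fixed interval $[1-\delta,1+\delta]$ with $\delta<1$ and $y\in C$, the point $tz+y$ lies in $C$ and, using the slicing functional $\mathfrak{u}\in\Omega_C\cap(-\Omega_{C^\circ})$, satisfies $|tz+y|\geqslant\langle\mathfrak{u},tz+y\rangle\geqslant(1-\delta)\langle\mathfrak{u},z\rangle=:c_0>0$, so it stays uniformly away from the singularity of $\Theta$; moreover $\nabla\Theta$ is $(-q-1)$-homogeneous and continuous on $C\setminus\{o\}$, so $|\nabla\Theta(x)|\leqslant M_1|x|^{-q-1}$ by compactness of $C\cap\mathbb{S}^{n-1}$. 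Hence $|\partial_t\Theta(tz+y)|=|\langle\nabla\Theta(tz+y),z\rangle|\leqslant M_1|z|\,|tz+y|^{-q-1}$, which is bounded above by the fixed function equal to $M_1|z|c_0^{-q-1}$ on $\{y\in C:|y|\leqslant 2(1+\delta)|z|\}$ and to $M_1|z|(|y|/2)^{-q-1}$ elsewhere; this dominating function lies in $L^1(C)$ exactly because $q+1>n$ (the tail $\int_{C\cap\{|y|\geqslant R\}}|y|^{-q-1}\,d\mathcal{H}^n(y)$ converges iff $q+1>n$), so a standard differentiation-under-the-integral theorem applies. Alternatively, one may pass to the limit $t\to+\infty$ in the derivative formula \eqref{Deriative-fornula-of-asymptotic-co-volume}: since $z+\tfrac1t\mathbb{A}\uparrow z+C$, the same $|y|^{-q-1}$-domination gives $\int_{z+\frac1t\mathbb{A}}\langle\nabla\Theta(x),z\rangle\,d\mathcal{H}^n(x)\to\int_{z+C}\langle\nabla\Theta(x),z\rangle\,d\mathcal{H}^n(x)$, while the decay estimate (Lemma~\ref{Decay estimate of the asymptotic weighted co-volume functinal}) forces $t^{q+1-n}\frac{d}{dt}T_\Theta(\mathbb{A},tz)\to0$, and combining these limits reproduces the same identity.
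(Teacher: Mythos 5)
Your main argument is correct, and it takes a genuinely different route from the paper. The paper proves this formula as an application of its asymptotic machinery: it takes the derivative formula \eqref{Deriative-fornula-of-asymptotic-co-volume} for $t\mapsto T_\Theta(\mathbb{A},tz)$, passes to the limit $t\to+\infty$ using dominated convergence over the exhaustion $z+\tfrac1t\mathbb{A}\uparrow z+C$ (with the bound $\int_{z+C}|\nabla\Theta|\,d\mathcal{H}^n<\infty$), and then invokes the decay estimate of Lemma~\ref{Decay estimate of the asymptotic weighted co-volume functinal} together with L'Hopital's rule to conclude that the limiting expression vanishes. You instead observe that $\boldsymbol{\chi}_{-C}\ast\Theta\,(z)=V_\Theta(z+C)$ is an $(n-q)$-homogeneous function of $z$ (finite for $q>n$ by Lemma~\ref{Finiteness-weighted volume}), so the identity is just Euler's homogeneity relation: $g'(1)=(n-q)g(1)$ on one side, and differentiation under the integral sign on the other, justified by the explicit dominating function built from $|\nabla\Theta(x)|\leqslant M_1|x|^{-q-1}$ and the integrability of $|y|^{-q-1}$ at infinity (which needs only $q>n-1$). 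This is shorter and more elementary: it bypasses $T_\Theta$, the $C$-asymptotic set $\mathbb{A}$, the decay estimate and L'Hopital entirely, and shows the formula is really a statement about the weighted volume of shifted cones; what the paper's route buys is the integration of the formula into its asymptotic co-volume theory and a demonstration of the decay estimate in action. One small caveat on your closing alternative: the decay estimate by itself does not ``force'' $t^{q+1-n}\frac{d}{dt}T_\Theta(\mathbb{A},tz)\to0$; one first needs that this limit exists (from the derivative formula plus dominated convergence) and then L'Hopital (or integrating $f'$ back) identifies its value as $0$ — which is exactly the paper's argument — but since your primary proof is self-contained, this does not affect its validity.
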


\begin{proof}
Let $\mathbb{A}$ be a $C$-asymptotic set. We claim that:
\begin{equation*}
\lim_{t\rightarrow+\infty}\int_{z+\frac{1}{t}\mathbb{A}}\langle\nabla\Theta(x),z\rangle\,d\mathcal{H}^n(x)
=\int_{z+C}\langle\nabla\Theta(x),z\rangle\,d\mathcal{H}^n(x).
\end{equation*}
Since the weight function $\Theta:C\setminus\{o\}\rightarrow(0,+\infty)$ is a $C^1$-smooth and $(-q)$-homogeneous function, $|\nabla\Theta(x)|$ is a non-negative continuous and $-(q+1)$-homogeneous function. By Lemma \ref{Finiteness-weighted volume}, we have
\begin{equation*}
\int_{z+C}|\nabla\Theta(x)|\,d\mathcal{H}^n(x)<+\infty,
\end{equation*}
where a non-negative integrable function does not affect this result. According to the absolute value inequality, we have
\begin{equation*}
\begin{aligned}
\bigg|\int_{z+C}\langle\nabla\Theta(x),z\rangle\,d\mathcal{H}^n(x)\bigg|&\leqslant
\int_{z+C}\big|\langle\nabla\Theta(x),z\rangle\big|\,d\mathcal{H}^n(x)\\
&\leqslant|z|\int_{z+C}|\nabla\Theta(x)|\,d\mathcal{H}^n(x)\\
&<+\infty.
\end{aligned}
\end{equation*}
Noting that $|\chi_{z+\frac{1}{t}\mathbb{A}}(x)\langle\nabla\Theta(x),z\rangle|\leqslant|z|
\,|\nabla\Theta(x)|$, by the dominated convergence theorem, we have
\begin{equation*}
\begin{aligned}
\lim_{t\rightarrow+\infty}\int_{z+\frac{1}{t}\mathbb{A}}\langle\nabla\Theta(x),z\rangle\,d\mathcal{H}^n(x)
&=\lim_{t\rightarrow+\infty}\int_{z+C}\chi_{z+\frac{1}{t}\mathbb{A}}(x)\langle\nabla\Theta(x),z\rangle\,d\mathcal{H}^n(x)\\
&=\int_{z+C}\langle\nabla\Theta(x),z\rangle\lim_{t\rightarrow+\infty}\chi_{z+\frac{1}{t}\mathbb{A}}(x)\,d\mathcal{H}^n(x)\\
&=\int_{z+C}\langle\nabla\Theta(x),z\rangle\chi_{\cup_{t>0}(z+\frac{1}{t}\mathbb{A})}(x)\,d\mathcal{H}^n(x)\\
&=\int_{z+C}\langle\nabla\Theta(x),z\rangle\chi_{(z+C)\setminus\{o\}}(x)\,d\mathcal{H}^n(x)\\
&=\int_{z+C}\langle\nabla\Theta(x),z\rangle\,d\mathcal{H}^n(x).
\end{aligned}
\end{equation*}
Let $f(t)=T_\Theta(\mathbb{A},tz)$. By the L'Hopital's rule, Lemma \ref{Decay estimate of the asymptotic weighted co-volume functinal}, and formula \eqref{Deriative-fornula-of-asymptotic-co-volume}, we have
\begin{equation*}
\begin{aligned}
0&=\lim_{t\rightarrow+\infty}\frac{f(t)}{t^{n-q}}\\
&=\lim_{t\rightarrow+\infty}\frac{f'(t)}{(n-q)t^{n-q-1}}\\
&=\frac{1}{n-q}\lim_{t\rightarrow+\infty}t^{q+1-n}\frac{d\,T_\Theta(\mathbb{A},tz)}{dt}\\
&=\boldsymbol{\chi}_{-C}\ast\Theta\,(z)-\frac{1}{n-q}\lim_{t\rightarrow+\infty}
\int_{z+\frac{1}{t}\mathbb{A}}\langle\nabla\Theta(x),z\rangle\,d\mathcal{H}^n(x)\\
&=\boldsymbol{\chi}_{-C}\ast\Theta\,(z)-\frac{1}{n-q}\int_{z+C}\langle\nabla\Theta(x),z\rangle\,d\mathcal{H}^n(x),
\end{aligned}
\end{equation*}
which gives
\begin{equation*}
\begin{aligned}
\boldsymbol{\chi}_{-C}\ast\Theta\,(z)&=\frac{1}{n-q}\int_{z+C}\langle\nabla\Theta(x),z\rangle\,d\mathcal{H}^n(x)\\
&=\frac{1}{n-q}\int_{z+C}\frac{\partial\Theta}{\partial z}(x)\,d\mathcal{H}^n(x).
\end{aligned}
\end{equation*}
This completes the proof.
\end{proof}
\begin{remark}
Noting that the directional derivative of $\Theta$ along $z$ may be positive or negative, as in the case of $\Theta(x)=|x|^{-q}$, the result in Theorem \ref{Formula-convolution-and-driection-derivative-of-Theta} indicates that its integral over $z+C$ must be negative.
\end{remark}

\section{The Brunn-Minkowski type inequality for $C$-pseudo-cones}\label{Section-ABM-inequality-and-conjecture}

Let $\mathbb{A}_1$ and $\mathbb{A}_2$ be two $C$-close sets. The co-sum of two $C$-coconvex sets $A_1=C\setminus\mathbb{A}_1$ and $A_2=C\setminus\mathbb{A}_2$ is defined by $A_1\oplus A_2=C\setminus(\mathbb{A}_1+\mathbb{A}_2)$, where $``+"$ is the usual Minkowski sum. The closeness of the co-sum operator is guaranteed by the complemented Brunn-Minkowski inequality for $C$-coconvex sets, which has been studied by Khovanski\u{i} and Timorin \cite{Khovanskii-Timorin-On_the_theory_of_coconvex_bodies}, as well as Milman and Rotem \cite{Milman-Rotem-Complemented_Brunn_Minkowski_inequalities}. Schneider \cite{Schneider-A_Brunn_Minkowski_theory} established the equality condition for this inequality. Additionally, Milman and Rotem \cite[p.~895-902]{Milman-Rotem-Complemented_Brunn_Minkowski_inequalities} also explored the complemented Brunn-Minkowski inequality for star-shaped sets and radial sum.
Inspired by their work, we aims to consider the asymptotic Brunn-Minkowski inequality for $C$-pseudo-cones in this section.

For convenience, we define the radial sum $E_1\widetilde{+}E_2$ of $C$-pseudo-cones $E_1$ and $E_2$ as follows:
\begin{equation*}
E_1\widetilde{+}E_2\triangleq\text{cl}\,\{\lambda u\,|\,\lambda\geqslant\rho_{E_1\widetilde{+}E_2}(u)
=\rho_{E_1}(u)+\rho_{E_2}(u),\,u\in\Omega_C\},
\end{equation*}
where $E_1\widetilde{+}E_2$ is generally not a $C$-pseudo-cone. The radial function of a $C$-coconvex set $A$ is defined by $\rho_A(u)\triangleq\rho_{C\setminus A}(u),\,u\in\Omega_C$, which is a locally Lipschitz continuous function on $\Omega_C$ by Lemma \ref{locally-Lipschitz-radial-map-of-C-pseudo-cone}. The radial sum $A_1\widetilde{+}A_2$ of $C$-coconvex sets $A_1$ and $A_2$ is defined by
\begin{equation*}
A_1\widetilde{+}A_2\triangleq C\setminus\big((C\setminus A_1)\widetilde{+}(C\setminus A_2)\big).
\end{equation*}
Regarding the radial sum and the co-sum, we have the following result:
\begin{lemma}\label{The-radial-sum-and-co-sum-relationship}
Let $E_1$ and $E_2$ be two $C$-pseudo-cones. Then
\begin{equation*}
E_1\widetilde{+}E_2\subset E_1+E_2.
\end{equation*}
In particular, for $C$-coconvex sets $A_1,A_2$, we have
\begin{equation*}
A_1\oplus A_2\subset A_1\widetilde{+}A_2.
\end{equation*}
\end{lemma}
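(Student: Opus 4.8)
The plan is to argue directly from the radial description of $E_1\widetilde{+}E_2$. First I would fix $u\in\Omega_C$ and any $\lambda\geqslant\rho_{E_1}(u)+\rho_{E_2}(u)$, and show $\lambda u\in E_1+E_2$. By the definition of the radial function we have $\rho_{E_1}(u)u\in E_1$ and $\rho_{E_2}(u)u\in E_2$, hence $\big(\rho_{E_1}(u)+\rho_{E_2}(u)\big)u\in E_1+E_2$. Since $u\in\Omega_C\subset\text{int}\,C$ and $\lambda-\rho_{E_1}(u)-\rho_{E_2}(u)\geqslant 0$, the vector $\big(\lambda-\rho_{E_1}(u)-\rho_{E_2}(u)\big)u$ lies in $C$. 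By Lemma \ref{Minkowski-sum-of-C-pseudo-cone} the set $E_1+E_2$ is a $C$-pseudo-cone, so ${\rm rec}(E_1+E_2)=C$ and adding this vector keeps us inside $E_1+E_2$ by the very definition of the recession cone; therefore $\lambda u\in E_1+E_2$. (Equivalently, one may use that $\tfrac{\lambda}{\rho_{E_1}(u)+\rho_{E_2}(u)}\geqslant 1$ together with the pseudo-cone scaling property of $E_1+E_2$.)

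Since $E_1+E_2$ is closed (being a $C$-pseudo-cone), passing to the closure in the definition of the radial sum then yields
\[
E_1\widetilde{+}E_2=\text{cl}\,\{\lambda u\,|\,\lambda\geqslant\rho_{E_1}(u)+\rho_{E_2}(u),\,u\in\Omega_C\}\subset E_1+E_2,
\]
which is the first assertion.

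For the co-sum statement I would apply the first part to the $C$-pseudo-cones $\mathbb{A}_i:=C\setminus A_i$ (a $C$-close set is a $C$-pseudo-cone). Since $\mathbb{A}_1,\mathbb{A}_2\subset C$ and $C$ is a convex cone, both $\mathbb{A}_1+\mathbb{A}_2$ and $\mathbb{A}_1\widetilde{+}\mathbb{A}_2$ are contained in $C$, so taking complements relative to $C$ reverses the inclusion $\mathbb{A}_1\widetilde{+}\mathbb{A}_2\subset\mathbb{A}_1+\mathbb{A}_2$. Combining this with the definitions $A_1\oplus A_2=C\setminus(\mathbb{A}_1+\mathbb{A}_2)$ and $A_1\widetilde{+}A_2=C\setminus(\mathbb{A}_1\widetilde{+}\mathbb{A}_2)$ gives exactly $A_1\oplus A_2\subset A_1\widetilde{+}A_2$.

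The argument is essentially elementary. The only points requiring a little care are the closure appearing in the definition of $E_1\widetilde{+}E_2$ (handled because a $C$-pseudo-cone is closed, so its sum is again closed by Lemma \ref{Minkowski-sum-of-C-pseudo-cone}) and the bookkeeping of which ambient set the complements are taken in for the co-sum identity; I do not anticipate a genuine obstacle here.
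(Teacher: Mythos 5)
Your proposal is correct and follows essentially the same route as the paper: place $\big(\rho_{E_1}(u)+\rho_{E_2}(u)\big)u$ in $E_1+E_2$, use Lemma \ref{Minkowski-sum-of-C-pseudo-cone} to see that $E_1+E_2$ is a closed $C$-pseudo-cone (so the ray beyond that point and the closure stay inside), and then take complements in $C$ for the co-sum statement. The recession-cone and scaling arguments you offer are just two phrasings of the same pseudo-cone property the paper invokes, so there is no substantive difference.
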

\begin{proof}
For each $u\in\Omega_C$, we have $\rho_{E_1}(u)u\in E_1$ and $\rho_{E_2}(u)u\in E_2$, then
\begin{equation*}
\rho_{E_1\widetilde{+}E_2}(u)u=\rho_{E_1}(u)u+\rho_{E_2}(u)u\in E_1+E_2.
\end{equation*}
Since $E_1+E_2$ is a $C$-pseudo-cone by Corollary \ref{Minkowski-sum-of-C-pseudo-cone}, for any $\lambda\geqslant\rho_{E_1\widetilde{+}E_2}(u)$, we have
\begin{equation*}
\lambda u\in E_1+E_2.
\end{equation*}
Thus, we can conclude that
\begin{equation*}
\{\lambda u\,|\,\lambda\geqslant\rho_{E_1\widetilde{+}E_2}(u),\,u\in\Omega_C\}\subset E_1+E_2,
\end{equation*}
which implies
\begin{equation*}
E_1\widetilde{+}E_2=\text{cl}\,\{\lambda u\,|\,\lambda\geqslant\rho_{E_1\widetilde{+}E_2}(u),\,u\in\Omega_C\}
\subset\text{cl}\,(E_1+E_2)=E_1+E_2.
\end{equation*}
For $C$-coconvex sets $A_1,A_2$, we have
\begin{equation*}
A_1\oplus A_2=C\setminus(C\setminus A_1+C\setminus A_2)\subset C\setminus((C\setminus A_1)\widetilde{+}(C\setminus A_2))
=A_1\widetilde{+}A_2.
\end{equation*}
\end{proof}

Suppose that $0\leqslant q\leqslant n-1$, we call a $C$-pseudo-cone $E$ a $(C,\Theta)$-close set if
\begin{equation*}
\overline{V}_\Theta(E)=\int_{C\setminus E}\Theta(x)\,dx<+\infty.
\end{equation*}
If $q=0$ and $\Theta\equiv1$, then a $(C,\Theta)$-close set is just a $C$-close set.

\begin{proof}[Proof of Theorem \ref{Theorem-Brunn-Minkowski-inequality-q-small-than-n-1}]
By Lemma \ref{The-radial-sum-and-co-sum-relationship}, we have
\begin{equation*}
\overline{V}_\Theta(E_1+E_2)=V_\Theta(C\setminus(E_1+E_2))\leqslant V_\Theta(C\setminus(E_1\widetilde{+}E_2))
\triangleq\overline{V}_\Theta(E_1\widetilde{+}E_2),
\end{equation*}
with equality if and only if $E_1+E_2=E_1\widetilde{+}E_2$ by the properties of continuous functions. Using the polar coordinates formula, we have
\begin{equation*}
\begin{aligned}
\overline{V}_\Theta(E_1\widetilde{+}E_2)&=\int_{C\setminus(E_1\widetilde{+}E_2)}\Theta(x)\,d\mathcal{H}^n(x)\\
&=\int_{\Omega_C}\int_0^{\rho_{E_1\widetilde{+}E_2}(u)}\Theta(ru)r^{n-1}\,drdu\\
&=\frac{1}{n-q}\int_{\Omega_C}\Theta(u)\rho^{n-q}_{E_1\widetilde{+}E_2}(u)\,du\\
&=\frac{1}{n-q}\int_{\Omega_C}\Theta(u)\big(\rho_{E_1}(u)+\rho_{E_2}(u)\big)^{n-q}\,du.
\end{aligned}
\end{equation*}
Since $n-q\in[1,n]$, by the Minkowski inequality for the $p$-norm, we obtain
\begin{equation*}
\begin{aligned}
\overline{V}_\Theta(E_1\widetilde{+}E_2)^\frac{1}{n-q}&=\bigg(\frac{1}{n-q}\int_{\Omega_C}\Theta(u)
\big(\rho_{E_1}(u)+\rho_{E_2}(u)\big)^{n-q}\,du\bigg)^\frac{1}{n-q}\\
&\leqslant\bigg(\frac{1}{n-q}\int_{\Omega_C}\Theta(u)\rho^{n-q}_{E_1}(u)\,du\bigg)^\frac{1}{n-q}
+\bigg(\frac{1}{n-q}\int_{\Omega_C}\Theta(u)\rho^{n-q}_{E_2}(u)\,du\bigg)^\frac{1}{n-q}\\
&=\overline{V}_\Theta(E_1)^\frac{1}{n-q}+\overline{V}_\Theta(E_2)^\frac{1}{n-q}.
\end{aligned}
\end{equation*}
Equality holds if and only if $E_1$ and $E_2$ are dilates of each other. Note that if $E_1$ and $E_2$ are dilates of each other, then $E_1+E_2=E_1\widetilde{+}E_2$. Therefore, we have
\begin{equation*}
\overline{V}_\Theta(E_1+E_2)^\frac{1}{n-q}\leqslant\overline{V}_\Theta(E_1\widetilde{+}E_2)^\frac{1}{n-q}
\leqslant\overline{V}_\Theta(E_1)^\frac{1}{n-q}+\overline{V}_\Theta(E_2)^\frac{1}{n-q},
\end{equation*}
where the two equalities hold if and only if $E_1$ and $E_2$ are dilates of each other.
\end{proof}

When $q=0$ and $\Theta\equiv1$, we obtain the following corollary:
\begin{corollary}
Let $A_1,A_2$ be two $C$-coconvex sets. Then
\begin{equation*}
V(A_1\oplus A_2)^{\frac{1}{n}}\leqslant V(A_1\widetilde{+}A_2)^{\frac{1}{n}}\leqslant V(A_1)^{\frac{1}{n}}+V(A_2)^{\frac{1}{n}}.
\end{equation*}
Both equalities hold if and only if $E_1$ and $E_2$ are dilates of each other.
\end{corollary}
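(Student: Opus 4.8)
The plan is to obtain this as the special case $q=0$, $\Theta\equiv1$ of Theorem~\ref{Theorem-Brunn-Minkowski-inequality-q-small-than-n-1}, read through the dictionary between $C$-coconvex sets and $C$-close sets. First I would set up the translation: for $C$-coconvex sets $A_1,A_2$ put $\mathbb{A}_i=C\setminus A_i$, which are $C$-close sets, so $\overline{V}(\mathbb{A}_i)=V(C\setminus\mathbb{A}_i)=V(A_i)<+\infty$; thus with $\Theta\equiv1$ and $q=0$ each $\mathbb{A}_i$ is a $(C,\Theta)$-close set and Theorem~\ref{Theorem-Brunn-Minkowski-inequality-q-small-than-n-1} applies to the pair $\mathbb{A}_1,\mathbb{A}_2$. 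From the definition of the co-sum, $A_1\oplus A_2=C\setminus(\mathbb{A}_1+\mathbb{A}_2)$, hence $V(A_1\oplus A_2)=\overline{V}(\mathbb{A}_1+\mathbb{A}_2)$; and from the definition of the radial co-sum together with Lemma~\ref{The-radial-sum-and-co-sum-relationship}, $A_1\widetilde{+}A_2=C\setminus(\mathbb{A}_1\widetilde{+}\mathbb{A}_2)$, hence $V(A_1\widetilde{+}A_2)=\overline{V}(\mathbb{A}_1\widetilde{+}\mathbb{A}_2)$.

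Next I would invoke not merely the final inequality but the two-step chain that the proof of Theorem~\ref{Theorem-Brunn-Minkowski-inequality-q-small-than-n-1} actually establishes, namely
\[
\overline{V}_\Theta(E_1+E_2)^{\frac{1}{n-q}}\leqslant\overline{V}_\Theta(E_1\widetilde{+}E_2)^{\frac{1}{n-q}}\leqslant\overline{V}_\Theta(E_1)^{\frac{1}{n-q}}+\overline{V}_\Theta(E_2)^{\frac{1}{n-q}},
\]
in which the first inequality is the inclusion $E_1\widetilde{+}E_2\subset E_1+E_2$ from Lemma~\ref{The-radial-sum-and-co-sum-relationship} and the second is the polar-coordinate representation of $\overline{V}_\Theta$ combined with the Minkowski inequality for the $(n-q)$-norm. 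Substituting $E_i=\mathbb{A}_i$, $q=0$, $\Theta\equiv1$ and rewriting everything through the translation of the first paragraph yields exactly
\[
V(A_1\oplus A_2)^{\frac{1}{n}}\leqslant V(A_1\widetilde{+}A_2)^{\frac{1}{n}}\leqslant V(A_1)^{\frac{1}{n}}+V(A_2)^{\frac{1}{n}}.
\]

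Finally, for the equality case I would carry over the equality analysis already performed for Theorem~\ref{Theorem-Brunn-Minkowski-inequality-q-small-than-n-1}: since $q=0$ lies in $[0,n-1)$ (we assume $n\geqslant2$), equality on the right forces $\rho_{\mathbb{A}_1}$ and $\rho_{\mathbb{A}_2}$ to be proportional on $\Omega_C$, i.e.\ $\mathbb{A}_1$ and $\mathbb{A}_2$ — equivalently $A_1$ and $A_2$ — are dilates of each other; in that case $\mathbb{A}_1+\mathbb{A}_2=\mathbb{A}_1\widetilde{+}\mathbb{A}_2$, so the left-hand equality holds as well, and conversely both equalities are trivial when $A_1$ and $A_2$ are dilates. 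I do not anticipate a genuine obstacle here: the only point needing a little care is that the middle quantity $\overline{V}(\mathbb{A}_1\widetilde{+}\mathbb{A}_2)$ is well defined and evaluated by the polar-coordinate formula even though $\mathbb{A}_1\widetilde{+}\mathbb{A}_2$ need not be a $C$-pseudo-cone, but this was already handled in the proof of Theorem~\ref{Theorem-Brunn-Minkowski-inequality-q-small-than-n-1} via the local Lipschitz continuity of the radial function supplied by Lemma~\ref{locally-Lipschitz-radial-map-of-C-pseudo-cone}.
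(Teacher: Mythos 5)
Your proposal is correct and matches the paper's own treatment: the corollary is obtained exactly as the specialization $q=0$, $\Theta\equiv1$ of Theorem \ref{Theorem-Brunn-Minkowski-inequality-q-small-than-n-1}, using the two-step chain through $\overline{V}_\Theta(E_1\widetilde{+}E_2)$ established in its proof and the dictionary $\mathbb{A}_i=C\setminus A_i$, $A_1\oplus A_2=C\setminus(\mathbb{A}_1+\mathbb{A}_2)$, $A_1\widetilde{+}A_2=C\setminus(\mathbb{A}_1\widetilde{+}\mathbb{A}_2)$ (with the paper's identification of $(C,\Theta)$-close sets with $C$-close sets in this case). The equality discussion you give coincides with the paper's equality condition, so there is nothing to add.
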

This corollary strengthens the complementary Brunn-Minkowski inequality from \cite{Schneider-A_Brunn_Minkowski_theory} and identifies its equality condition through a different approach. It is important to note that the key to establishing the equality condition is the closeness of the Minkowski sum of $C$-pseudo-cones, which is guaranteed by our asymptotic theory and is completely independent of the methods used in \cite{Schneider-A_Brunn_Minkowski_theory}.

The case of $q\in[0,n]$ is satisfactory. However, for $n-1<q<n$, Schneider \cite{Schneider-A_weighted_Minkowski_theorem} noted that the solution to the weighted Minkowski problem lacks uniqueness, which implies that Brunn-Minkowski type inequalities may not exist in this case. In fact, according to \cite[Corollary 4.4]{Milman-Rotem-Complemented_Brunn_Minkowski_inequalities}, a Borel measure $\mu$ with a homogeneous density $w:\mathbb{R}^n\setminus\{o\}\rightarrow\mathbb{R}_+$ of degree $\frac{1}{p}$ satisfies the $s$-complemented Brunn-Minkowski inequality
\begin{equation*}
\mu[\mathbb{R}^n\setminus\big(\lambda A+(1-\lambda)B\big)]\leqslant[\lambda\mu(\mathbb{R}^n\setminus A)^s
+(1-\lambda)\mu(\mathbb{R}^n\setminus B)^s]^{\frac{1}{s}},
\end{equation*}
where $p\in(-\infty,-\frac{1}{n-1}]\cup(-\frac{1}{n},0)\cup(0,+\infty]$ and
\begin{equation*}
\frac{1}{s}=\frac{1}{p}+n.
\end{equation*}
Let $\frac{1}{p}=-q$, then $s=\frac{1}{n-q}$ and hence $q\in(-\infty,n-1]\cup(n,+\infty)$. Thus, it is evident that Brunn-Minkowski type inequality may not hold in this case. Moreover, for $q\geqslant n$, the weighted co-volume functional is infinite for any $C$-pseudo-cone, as established in Theorem \ref{Theorem-finite-table}. However, utilizing our asymptotic theory, we conjecture that the following Brunn-Minkowski type inequality holds. We also believe that the convolution formula in Theorem \ref{Formula-convolution-and-driection-derivative-of-Theta} will be instrumental in establishing the inequality.
\vskip 2mm
\noindent {\bf Asymptotic Brunn-Minkowski inequality for $C$-pseudo-cones:} Let $E_1,E_2$ be two $C$-pseudo-cones with their $C$-starting points $z_1,z_2\neq o$, and let $\lambda\in(0,1)$. If $n\neq q\geqslant 0$, then
\begin{equation*}
T_\Theta(\lambda E_1+(1-\lambda)E_2)^{\frac{1}{n-q}}\leqslant\lambda T_\Theta(E_1)^{\frac{1}{n-q}}
+(1-\lambda)T_\Theta(E_2)^{\frac{1}{n-q}}.
\end{equation*}

\section{The weighted Minkowski problem for $q\in[0,n-1)$}

As an application of our asymptotic theory, we will consider the solutions to the weighted Minkowski problem in this section. First, we recall some notations from \cite{Schneider-A_weighted_Minkowski_theorem}. Let $b(E)$ denote the distance of the $C$-pseudo-cone $E$ from the origin, and let $\delta_C(u)$ represent the spherical distance of $u$ from $\partial\Omega_{C^\circ}$ for $u\in\Omega_{C^\circ}$. For $\alpha>0$, we define $\omega(\alpha)=\{u\in\Omega_{C^\circ}\,|\,\delta_C(u)\geqslant\alpha\}$.
According to \cite{Schneider-A_Brunn_Minkowski_theory}, a set $K$ is \emph{$C$-determined} by the compact set $\emptyset\neq\omega\subset\Omega_{C^\circ}$ if
\begin{equation*}
K=C\cap\bigcap_{u\in\omega}H^-(u,h_K(u)).
\end{equation*}
Denote by $\mathcal{K}(C,\omega)$ the set of $C$-pseudo-cones that are $C$-determined by $\omega$. Let $h:\omega\to\mathbb{R}$ be a positive continuous function. The Wulff shape associated with $(C,\omega,h)$ is defined by
\begin{equation*}
[h]=C\cap\bigcap_{v\in\omega}\{x\in\mathbb{R}^n\,|\,\langle x,v\rangle\leqslant-h(v)\},
\end{equation*}
which belongs to $\mathcal{K}(C,\omega)$ (See \cite{Li-Ye-Zhu-The_dual_Minkowski_problem,Schneider-A_Brunn_Minkowski_theory,Schneider-A_weighted_Minkowski_theorem} for more details).
For $0\leqslant q< n-1$, using methods similar to those in \cite{Schneider-A_weighted_Minkowski_theorem}, we have the following Lemmas \ref{lemma-variational-formula}-\ref{Lower bound estimate}:
\begin{lemma}[see \cite{Schneider-A_weighted_Minkowski_theorem}]\label{lemma-variational-formula}
Let $\omega\subset\Omega_{C^\circ}$ be a nonempty compact set, and let $K\in\mathcal{K}(C,\omega)$. Let $f:\omega\rightarrow\mathbb{R}$ be continuous, and let $\big[\overline h_K|_\omega+tf\big]$ be the Wulff shape associated with $(C,\omega, \overline h_K|_\omega+tf)$. Then
\begin{equation}\label{Variational formula}
\lim_{t\rightarrow0}\frac{\overline{V}_\Theta(\big[\overline h_K|_\omega+tf\big])-\overline{V}_\Theta(K)}{t}
=\int_\omega f(u)\,dS_{n-1}^\Theta(K,u).
\end{equation}
\end{lemma}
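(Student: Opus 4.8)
The plan is to pass to the radial parametrization, differentiate under the integral sign using \eqref{Polar-coordinates-co-volume} as the working formula, and then convert the answer back to a boundary integral via the co-area formula \eqref{The-co-area-formula-in-convex-geometry} and the push-forward identity \eqref{weighted-surface-area-measure-equivalent-formula}.

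\medskip
\noindent\textbf{Step 1 (setup and a priori regularity).} For $|t|$ small enough that $\overline h_K|_\omega+tf>0$ on the compact set $\omega$, the Wulff shape $K_t:=\big[\overline h_K|_\omega+tf\big]$ is a $C$-pseudo-cone in $\mathcal K(C,\omega)$; since $\omega$ is a compact subset of $\Omega_{C^\circ}$ one checks, exactly as in \cite{Schneider-A_weighted_Minkowski_theorem}, that $K_t$ is a $(C,\Theta)$-close set with finite $S_{n-1}^\Theta(K_t,\cdot)$, that $\rho_{K_t}\to\rho_K$ locally uniformly on $\Omega_C$, and that $\overline V_\Theta(K_t)\to\overline V_\Theta(K)$ as $t\to0$. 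Because $u\in\Omega_C$ forces $\langle u,v\rangle<0$ for every $v\in\omega$, the radial function of the Wulff shape is $\rho_{K_t}(u)=\sup_{v\in\omega}\frac{\overline h_K(v)+tf(v)}{-\langle u,v\rangle}$. In particular $t\mapsto\rho_{K_t}(u)$ is convex for each fixed $u$, hence so is $t\mapsto\rho_{K_t}(u)^{n-q}$ (recall $n-q\in(1,n]$), and the difference quotients in $t$ are monotone.

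\medskip
\noindent\textbf{Step 2 (pointwise derivative a.e.).} By Lemma~\ref{locally-Lipschitz-radial-map-of-C-pseudo-cone} the radial map $r_K$ is a bi-Lipschitz homeomorphism onto $\partial_eK$ on compact subsets of $\Omega_C^e$, so it pulls the $\mathcal H^{n-1}$-null set of non-smooth boundary points of $K$ back to an $\mathcal H^{n-1}$-null subset of $\Omega_C^e$. Thus for $\mathcal H^{n-1}$-a.e.\ $u\in\Omega_C^e$ the point $\rho_K(u)u$ is a regular boundary point whose unique outer normal is $v_0(u):=\boldsymbol\alpha_K(u)\in\omega$, and the supremum defining $\rho_K(u)$ is attained only at $v_0(u)$. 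Choosing for each $t$ a maximizer $v_t\in\omega$, compactness of $\omega$ and uniqueness of $v_0(u)$ force $v_t\to v_0(u)$, and the sandwich
\[
\frac{f(v_0(u))}{-\langle u,v_0(u)\rangle}\ \le\ \frac{\rho_{K_t}(u)-\rho_K(u)}{t}\ \le\ \frac{f(v_t)}{-\langle u,v_t\rangle}\qquad(t>0),
\]
together with its mirror image for $t<0$, yields $\frac{d}{dt}\big|_{t=0}\rho_{K_t}(u)=\frac{f(v_0(u))}{-\langle u,v_0(u)\rangle}=\frac{\rho_K(u)\,f(\boldsymbol\alpha_K(u))}{\overline h_K(\boldsymbol\alpha_K(u))}$, the last identity using $\overline h_K(\boldsymbol\alpha_K(u))=-\rho_K(u)\langle u,\boldsymbol\alpha_K(u)\rangle$.

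\medskip
\noindent\textbf{Step 3 (interchange, then co-area and push-forward).} By \eqref{Polar-coordinates-co-volume},
\[
\frac{\overline V_\Theta(K_t)-\overline V_\Theta(K)}{t}=\frac{1}{n-q}\int_{\Omega_C^e}\Theta(u)\,\frac{\rho_{K_t}(u)^{n-q}-\rho_K(u)^{n-q}}{t}\,du .
\]
For $0<t\le t_1$ the integrand is nondecreasing in $t$ and dominated by $\Theta(u)\,\frac{\rho_{K_{t_1}}(u)^{n-q}-\rho_K(u)^{n-q}}{t_1}$, which is integrable because $\overline V_\Theta(K_{t_1})$ and $\overline V_\Theta(K)$ are finite; symmetrically as $t\uparrow 0$, with the opposite monotonicity and a lower bound coming from some $t=-t_1$. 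Hence monotone/dominated convergence and Step~2 give the two-sided derivative $\frac{d}{dt}\big|_{t=0}\overline V_\Theta(K_t)=\int_{\Omega_C^e}\Theta(u)\,\rho_K(u)^{n-q}\,\overline h_K(\boldsymbol\alpha_K(u))^{-1}f(\boldsymbol\alpha_K(u))\,du$. Writing $\rho_K^{n-q}=\rho_K^{n}\cdot\rho_K^{-q}$ and applying the co-area formula \eqref{The-co-area-formula-in-convex-geometry} with $g(u)=\Theta(u)\rho_K(u)^{-q}f(\boldsymbol\alpha_K(u))$ turns this into $\int_{\partial_eK}\Theta(x/|x|)|x|^{-q}f(\nu_K(x))\,d\mathcal H^{n-1}(x)=\int_{\partial_eK}\Theta(x)f(\nu_K(x))\,d\mathcal H^{n-1}(x)$ by the $(-q)$-homogeneity of $\Theta$; finally \eqref{weighted-surface-area-measure-equivalent-formula} and the fact that $S_{n-1}^\Theta(K,\cdot)$ is concentrated on $\omega$ identify this with $\int_\omega f(u)\,dS_{n-1}^\Theta(K,u)$, which is \eqref{Variational formula}.

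\medskip
\noindent\textbf{Main obstacle.} The crux is Step~2 combined with the domination in Step~3: one must know that $\rho_{K_t}(u)$ is genuinely differentiable at $t=0$ for a.e.\ $u$ with \emph{exactly} the stated value (this rests on the $\mathcal H^{n-1}$-a.e.\ uniqueness of the supporting normal, transported through the bi-Lipschitz radial map, and on the compactness of $\omega$ to force the maximizers $v_t$ to converge), and that the difference quotients are uniformly integrable (here the convexity of $t\mapsto\rho_{K_t}(u)^{n-q}$ and the finiteness of $\overline V_\Theta$ on a whole $t$-interval around $0$ are precisely what is needed). The auxiliary facts invoked in Step~1 --- that $K_t$ remains a $(C,\Theta)$-close set with finite weighted surface area measure and that $\overline V_\Theta$ and $S_{n-1}^\Theta$ depend continuously on $t$ --- are routine adaptations of \cite{Schneider-A_weighted_Minkowski_theorem}, and are exactly where the hypothesis that $\omega$ is compact (rather than an arbitrary subset of $\Omega_{C^\circ}$) enters.
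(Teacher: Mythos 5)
The paper itself gives no proof of this lemma \textemdash{} it is quoted from Schneider's weighted Minkowski paper, and the method used there is precisely the one you reconstruct: a.e.\ differentiation of the Wulff-shape radial function via the Aleksandrov-type sandwich with the maximizing normals, domination and passage to the limit in the polar-coordinate formula \eqref{Polar-coordinates-co-volume}, and conversion to $\int_\omega f\,dS_{n-1}^\Theta(K,\cdot)$ through the co-area formula \eqref{The-co-area-formula-in-convex-geometry}, the push-forward identity \eqref{weighted-surface-area-measure-equivalent-formula}, and the concentration of $S_{n-1}^\Theta(K,\cdot)$ on $\omega$ for $K\in\mathcal K(C,\omega)$. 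Your argument is correct, including the adaptation to $0\le q<n-1$ (where $n-q\ge1$ yields the convexity used for the domination, though here one could even get a uniform bound since $-\langle u,v\rangle$ is bounded away from $0$ on $\Omega_C\times\omega$), so it is essentially the same approach the paper relies on.
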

\begin{lemma}[see \cite{Schneider-A_weighted_Minkowski_theorem}, Upper bound estimate]\label{Upper bound estimate}
There exists a constant $\Lambda$, depending only on $C$ and $\Theta$, such that every $(C,\Theta)$-close set $E$ with $\overline{V}_\Theta(E)=1$ satisfies $\overline{h}_E\leqslant\Lambda$.
\end{lemma}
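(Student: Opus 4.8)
The plan is to exploit the following elementary fact: if $\overline{h}_E$ takes a large value at even a single direction $v_0\in\Omega_{C^\circ}$, then $E$ lies on the far side of a hyperplane whose distance from the origin is exactly $\overline{h}_E(v_0)$, so a whole Euclidean ball of that radius intersected with $C$ must be contained in the complementary region $C\setminus E$; since $\Theta$ is $(-q)$-homogeneous, the $\Theta$-weight of that ball grows like a positive power of $\overline{h}_E(v_0)$, and it cannot exceed $\overline{V}_\Theta(E)=1$.

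First I would fix an arbitrary $v_0\in\Omega_{C^\circ}$ and put $r=\overline{h}_E(v_0)$, which is finite and strictly positive because $-\infty<h_E(v_0)<0$. By the definition of the support function, every $x\in E$ satisfies $\langle x,v_0\rangle\le h_E(v_0)=-r$. On the other hand $|v_0|=1$, so the Cauchy--Schwarz inequality gives $\langle y,v_0\rangle\ge-|y|$ for every $y\in\mathbb{R}^n$; hence any $y\in C$ with $|y|<r$ satisfies $\langle y,v_0\rangle>-r\ge\langle x,v_0\rangle$ for all $x\in E$, and therefore $y\notin E$. This yields the inclusion $\{y\in C:|y|<r\}\subseteq C\setminus E$.

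Next I would compute the $\Theta$-weight of this piece in polar coordinates. Set $a_\Theta:=\int_{C\cap\mathbb{S}^{n-1}}\Theta(u)\,d\mathcal{H}^{n-1}(u)$, a finite and strictly positive constant depending only on $C$ and $\Theta$ (finite because $\Theta$ is continuous on the compact set $C\cap\mathbb{S}^{n-1}$, which does not contain the origin; positive because $\Theta>0$ and $C$ is $n$-dimensional). Using that $\Theta$ is $(-q)$-homogeneous,
\begin{equation*}
\int_{\{y\in C:|y|<r\}}\Theta(y)\,d\mathcal{H}^n(y)=\int_{C\cap\mathbb{S}^{n-1}}\Theta(u)\int_0^r\rho^{\,n-1-q}\,d\rho\,d\mathcal{H}^{n-1}(u)=\frac{a_\Theta}{n-q}\,r^{\,n-q},
\end{equation*}
where the radial integral converges because the standing assumption $0\le q<n-1$ forces $n-q>0$. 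Combining this with the inclusion from the previous step and with $\overline{V}_\Theta(E)=1$ gives
\begin{equation*}
1=\overline{V}_\Theta(E)\ \ge\ \int_{\{y\in C:|y|<r\}}\Theta\,d\mathcal{H}^n\ =\ \frac{a_\Theta}{n-q}\big(\overline{h}_E(v_0)\big)^{\,n-q},
\end{equation*}
so that $\overline{h}_E(v_0)\le\Lambda:=\big((n-q)/a_\Theta\big)^{1/(n-q)}$. Since $v_0\in\Omega_{C^\circ}$ was arbitrary and $\Lambda$ depends only on $C$ and $\Theta$, this establishes $\overline{h}_E\le\Lambda$.

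I do not expect a genuine obstacle; the only points that need a little care are the separation step that produces a ball inside $C\setminus E$ (where Cauchy--Schwarz and $|v_0|=1$ enter) and the verification that $a_\Theta\in(0,\infty)$ together with convergence of the radial integral, which is exactly where the hypothesis $q<n-1<n$ is used. Alternatively one may argue through $b(E)$, the distance of $E$ from the origin: since $p_E(o)\in E$ one has $\overline{h}_E(v)=-h_E(v)\le-\langle p_E(o),v\rangle\le b(E)$ for all $v\in\Omega_{C^\circ}$, while $\{y\in C:|y|<b(E)\}\subseteq C\setminus E$ forces $b(E)\le\Lambda$ by the same displayed estimate; this route yields the slightly stronger conclusion $\sup_{\Omega_{C^\circ}}\overline{h}_E\le b(E)\le\Lambda$.
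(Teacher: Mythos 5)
Your proposal is correct. Note that the paper itself does not prove this lemma: it imports it from Schneider's weighted Minkowski paper with the remark that his method carries over to the range $0\le q<n-1$, so your argument supplies exactly the omitted details, and it is the standard one. The key inclusion is justified properly: any $x\in E$ has $\langle x,v_0\rangle\le h_E(v_0)=-\overline h_E(v_0)$, hence $|x|\ge\overline h_E(v_0)$, so $\{y\in C:|y|<\overline h_E(v_0)\}\subset C\setminus E$; the polar-coordinate computation uses only continuity and $(-q)$-homogeneity of $\Theta$ together with $n-q>0$, which holds throughout the standing range $0\le q\le n-1$ (indeed for all $q<n$); and the resulting constant $\Lambda=\big((n-q)/a_\Theta\big)^{1/(n-q)}$ depends only on $C$ and $\Theta$, since $q$ is determined by the homogeneity of $\Theta$. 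Your alternative route through $b(E)$ gives the same bound $\sup_{\Omega_{C^\circ}}\overline h_E\le b(E)\le\Lambda$ and is consistent with how the quantity $b(\cdot)$ is used later in the paper's proof of the Minkowski problem.
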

\begin{lemma}[see \cite{Schneider-A_weighted_Minkowski_theorem}, Lower bound estimate]\label{Lower bound estimate}
There exsts a number $t>0$ such that
\begin{equation*}
\{K\in\mathcal{K}(C,\omega)\}\wedge\{\overline{V}_\Theta(K)=1\}\Rightarrow C(t)\subset K.
\end{equation*}
\end{lemma}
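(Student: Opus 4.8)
The plan is to obtain the lower bound as a direct consequence of the upper bound estimate of Lemma~\ref{Upper bound estimate} combined with a compactness argument on $\omega$. Since any $K\in\mathcal{K}(C,\omega)$ with $\overline{V}_\Theta(K)=1$ has finite weighted co-volume, it is a $(C,\Theta)$-close set, so Lemma~\ref{Upper bound estimate} provides a constant $\Lambda=\Lambda(C,\Theta)>0$ with $\overline{h}_K\le\Lambda$ on $\Omega_{C^\circ}$, in particular on $\omega$. Because $K$ is $C$-determined by $\omega$ we have $K=C\cap\bigcap_{u\in\omega}H^-(u,-\overline{h}_K(u))$, and since $-\overline{h}_K(u)\ge-\Lambda$ this yields the $K$-independent inclusion $K\supseteq C\cap\bigcap_{u\in\omega}H^-(u,-\Lambda)$. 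It therefore suffices to produce a single $t>0$, depending only on $C$, $\omega$, $\Lambda$, with $C(t)\subset C\cap\bigcap_{u\in\omega}H^-(u,-\Lambda)$.

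For that step I would use that $\omega$ is a compact subset of the \emph{open} set $\Omega_{C^\circ}\subset\text{int}\,C^\circ$, so $\langle x,u\rangle<0$ for every $x\in C\setminus\{o\}$ and every $u\in\omega$. Applying this on the compact product $C(1)\times\omega$, the continuous function $(x,u)\mapsto\langle x,u\rangle$ attains a strictly negative maximum, say $-c<0$ with $c=c(C,\omega)$. Homogeneity in $x$ then gives $\langle x,u\rangle\le-c\,\langle x,\mathfrak{u}\rangle$ for all $x\in C$ and all $u\in\omega$. Choosing $t:=\Lambda/c$, every $x\in C(t)$ satisfies $\langle x,u\rangle\le-ct=-\Lambda\le-\overline{h}_K(u)$ for all $u\in\omega$, hence $x\in H^-(u,-\overline{h}_K(u))$; since also $x\in C$ we conclude $x\in K$. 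Thus $C(t)\subset K$ for this $t$, which depends only on $C$, $\Theta$, and $\omega$, proving the lemma. (One in fact gets $\{x\in C:\langle x,\mathfrak{u}\rangle\ge t\}\subset K$, consistent with $K$ being a pseudo-cone.)

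The only genuinely delicate point — and the one I would double-check — is the strict negativity and uniformity of the constant $-c$: it relies crucially on $\omega$ being compact and contained in $\text{int}\,C^\circ$ rather than merely in $C^\circ$, for otherwise $\langle x,u\rangle$ could approach $0$ as $x$ ranges over the full slice $C(1)$ and no finite $t$ would force the whole slice into $K$. Everything else is routine bookkeeping between the Wulff-shape representation of $K$ and the cone slices $C(t)$, together with a direct appeal to Lemma~\ref{Upper bound estimate}.
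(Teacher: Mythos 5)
Your proof is correct, and every step checks out: $K\in\mathcal{K}(C,\omega)$ with $\overline{V}_\Theta(K)=1$ is indeed a $(C,\Theta)$-close set, so Lemma \ref{Upper bound estimate} gives $\overline{h}_K\leqslant\Lambda$ on $\omega$; the $C$-determination identity $K=C\cap\bigcap_{u\in\omega}H^-(u,h_K(u))$ then yields $K\supseteq C\cap\bigcap_{u\in\omega}H^-(u,-\Lambda)$; and the uniform constant $c>0$ with $\langle x,u\rangle\leqslant-c$ on $C(1)\times\omega$ exists because $C(1)$ is compact (as $\langle\mathfrak{u},\cdot\rangle>0$ on $C\setminus\{o\}$) and $\langle x,u\rangle<0$ for all $x\in C\setminus\{o\}$, $u\in\Omega_{C^\circ}$, so $t=\Lambda/c$ works, with the correct dependence on $C,\Theta,\omega$ only. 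Note, however, that the paper itself gives no proof of this lemma: it is quoted from \cite{Schneider-A_weighted_Minkowski_theorem} with the remark that the methods there extend to $0\leqslant q<n-1$. The argument that source suggests is different from yours and does not pass through Lemma \ref{Upper bound estimate}: one argues by contradiction that if some $z\in C(t)\setminus K$, then $C$-determination produces $u\in\omega$ with $h_K(u)<\langle z,u\rangle\leqslant-ct$ (the same constant $c$ as yours), whence $C\setminus K\supset C\cap\{x\,|\,\langle x,u\rangle\geqslant-ct\}$, a region whose weighted volume is bounded below by $\mathrm{const}\cdot(ct)^{n-q}$ uniformly in $u\in\omega$ and thus exceeds $1$ for large $t$, contradicting $\overline{V}_\Theta(K)=1$. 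Your route trades that volume-growth estimate for a direct appeal to the upper bound estimate, which makes the proof shorter and gives an explicit threshold $t=\Lambda/c$ (and even the stronger inclusion $\{x\in C\,|\,\langle x,\mathfrak{u}\rangle\geqslant t\}\subset K$), at the cost of consuming Lemma \ref{Upper bound estimate}; the volume-growth argument is self-contained and needs only the normalization $\overline{V}_\Theta(K)=1$. The delicate point you flag, strict negativity of $\langle x,u\rangle$ uniformly over $C(1)\times\omega$, is exactly the right hypothesis to isolate, and it is where both proofs use compactness of $\omega$ inside $\text{int}\,C^\circ$.
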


\begin{proof}[Proof of Theorem \ref{Weighted-Minkowski-problem-for-q-small-than-n-1}]
The existence of a solution to the weight Minkowski problem will be divided into two steps as follows.

\noindent\emph{Step 1}: Assume that $\omega\subset\Omega_{C^\circ}$ is compact and that $\mu$ is a nonzero finite Borel measure on $\omega$. We consider the functional $\mathcal{F}:\mathbb{C}^+(\omega)\rightarrow(0,\infty)$ defined by
\begin{equation}
\mathcal{F}(f)=\overline{V}_\Theta([f])^{-\frac{1}{n-q}}\int_\omega f\,d\mu,
\end{equation}
where $[f]$ is the Wulff shape of $f\in\mathbb{C}^+(\omega)$. Suppose there exists an $f_0\in\mathbb{C}^+(\omega)$ such that
\begin{equation*}
\mathcal{F}(f_0)=\sup_{f\in\mathbb{C}^+(\omega)}\mathcal{F}(f),
\end{equation*}
where $f_0=\overline{h}_{[f_0]}$ due to $f\leqslant\overline{h}_{[f]}$ for every $f\in\mathbb{C}^+(\omega)$. By the variational formula \eqref{Variational formula}, for each $f\in\mathbb{C}^+(\omega)$, we have
\begin{equation*}
\begin{aligned}
0&=\frac{\partial\mathcal{F}(f_0+tf)}{\partial t}\bigg|_{t=0}\\
&=-\frac{1}{n-q}\overline{V}_\Theta([f_0])^{-\frac{1}{n-q}-1}\int_\omega f(u)\,dS_{n-1}^\Theta([f_0],u)\int_\omega f_0\,d\mu
+\overline{V}_\Theta([f_0])^{-\frac{1}{n-q}}\int_\omega f\,d\mu\\
&=\overline{V}_\Theta([f_0])^{-\frac{1}{n-q}}\bigg(\int_\omega f\,d\mu-\frac{1}{(n-q)\overline{V}_\Theta([f_0])}\int_\omega f_0\,d\mu
\int_\omega f(u)\,dS_{n-1}^\Theta([f_0],u)\bigg),
\end{aligned}
\end{equation*}
where $0<\overline{V}_\Theta([f_0])<+\infty$ due to $q<n$ and $[f_0]\in\mathcal{K}(C,\omega)$. Thus, the Euler-Lagrange equation of the functional $\mathcal{F}$ is
\begin{equation}\label{the Euler-Lagrange equation}
\int_\omega f\,d\mu=\int_\omega f(u)\,dS_{n-1}^\Theta(\widetilde{[f_0]},u),
\end{equation}
where
\begin{equation*}
\widetilde{[f_0]}=\bigg(\frac{1}{(n-q)\overline{V}_\Theta([f_0])}\int_\omega f_0\,d\mu\bigg)^\frac{1}{n-1-q}[f_0].
\end{equation*}
Applying the Riesz representation theorem to the Euler-Lagrange equation \eqref{the Euler-Lagrange equation}, we have
\begin{equation*}
S_{n-1}^\Theta(\widetilde{[f_0]},\cdot)=\mu.
\end{equation*}
Therefore, we need to show that the functional $\mathcal{F}$ has maximum. Similar to Lemmas 7 and 8 in \cite{Schneider-A_weighted_Minkowski_theorem}, for $0\leqslant q<n-1$, the functional $\mathcal{F}$ remains a $0$-homogeneous continuous functional, so we have
\begin{equation*}
\sup_{f\in\mathbb{C}^+(\omega)}\mathcal{F}(f)=\sup_{f\in\mathcal{L}}\mathcal{F}(f),
\end{equation*}
where $\mathcal{L}=\{\overline{h}_K|_\omega\,|\,K\in\mathcal{K}(C,\omega),\,\overline{V}_\Theta(K)=1\}$. For any $\overline{h}_K|_\omega\in\mathcal{L}$, since $K$ is a $(C,\Theta)$-close set, by Lemma \ref{Upper bound estimate}, we have $\mathcal{F}(\overline{h}_K|_\omega)\leqslant\Lambda$ for some constant $\Lambda>0$. Combining Lemma \ref{Lower bound estimate} and Schneider's selection theorem (see \cite[Lemma 1]{Schneider-Pseudo_cones}), we conclude that $\mathcal{F}$ attains a maximum on $\mathcal{L}$.

\noindent\emph{Step 2}: Now we assume that $\mu$ is a nonzero finite Borel measure on $\Omega_{C^\circ}$. Choose a number $\tau>0$ such that $\mu(\omega(\tau))>0$ and a sequence $(\omega_j)_{j=1}^{+\infty}$ of compact subsets of $\Omega_{C^\circ}$ such that
\begin{equation*}
\omega_1=\omega(\tau),\,\omega_j\subset\text{int}\,\omega_{j+1},\,\bigcup_{j=1}^{+\infty}\omega_j=\Omega_{C^\circ}.
\end{equation*}
Define measures $\mu_j$ by $\mu_j(\omega)=\mu(\omega\cap\omega_j)$ for each $j\in\mathbb{N}$ and every $\omega\in\mathcal{B}(\Omega_{C^\circ})$. From step 1, there exists a sequence of $K_j\in\mathcal{K}(C,\omega_j)$ such that
\begin{equation*}
\lambda_jS^\Theta_{n-1}(K_j,\cdot)=\mu_j, \ \,\overline{V}_\Theta(K_j)=1
\end{equation*}
with
\begin{equation*}
\lambda_j=\frac{1}{n-q}\int_{\omega_j}\overline{h}_{K_j}\,d\mu_j=\frac{1}{n-q}\int_{\omega_j}\overline{h}_{K_j}\,d\mu.
\end{equation*}
Let $L_j=\lambda_j^\frac{1}{n-1-q}K_j$, then $S^\Theta_{n-1}(L_j,\cdot)=\mu_j$. To apply the Blaschke selection theorem for the sequence $\{L_j\}_{j=1}^{+\infty}$, we need upper and lower bound estimates for the distances $b(L_j)$. By Lemma \ref{Upper bound estimate}, we have $\overline h_{K_j}\leqslant\Lambda$ with a constant $\Lambda$ independent of $j$. Since
\begin{equation*}
\lambda_j=\frac{1}{n-q}\int_{\omega_j}\overline{h}_{K_j}\,d\mu\leqslant\frac{1}{n-q}\int_{\Omega_{C^\circ}}\Lambda\,d\mu
=\frac{\Lambda}{n-q}\mu(\Omega_{C^\circ})<\infty,
\end{equation*}
it follows that
\begin{equation*}
\overline{h}_{L_j}=\lambda_j^\frac{1}{n-1-q}\overline{h}_{K_j}\leqslant
\Big(\frac{\Lambda}{n-q}\mu(\Omega_{C^\circ})\Big)^\frac{1}{n-1-q}\Lambda.
\end{equation*}
Thus, $\{b(L_j)\}_{j=1}^{+\infty}$ is bounded from above. Choose a number $t_1>0$ such that
\begin{equation*}
L_j\cap C^-(t_1)\not=\emptyset, \ \forall\,j\in\mathbb{N},
\end{equation*}
and we have $S_{n-1}(L_j,\omega(\tau))=\varphi(\omega(\tau))\triangleq s>0$. Then, by \cite[Lemma 9]{Schneider-Pseudo_cones}, we conclude that $b(L_j)>\lambda$ for some constant $\lambda>0$ depending only on $C,\tau$ and $s$. Therefore, using the Blaschke selection theorem, there exists a $C$-pseudo-cone $K$ such that $L_j\rightarrow K$ as $j\rightarrow+\infty$. Following the methods in \cite{Schneider-A_weighted_Minkowski_theorem}, we find that
\begin{equation*}
S^\Theta_{n-1}(K,\cdot)=\mu.
\end{equation*}
Using the formula \eqref{Integral-formula-weighted-surface-area-measure-weighted-co-volume}, we have
\begin{equation*}
\begin{aligned}
\overline{V}_{\Theta}(L_j)&=\frac{1}{n-q}\int_{\Omega_{C^\circ}}\overline{h}_{L_j}(v)\,dS^{\Theta}_{n-1}(L_j,v)\\
&=\frac{1}{n-q}\int_{\Omega_{C^\circ}}\overline{h}_{L_j}(v)\,d\mu_j(v)\\
&\leqslant\Big(\frac{\Lambda}{n-q}\mu(\Omega_{C^\circ})\Big)^\frac{1}{n-1-q}\Lambda\mu(\Omega_{C^\circ})\triangleq\alpha(\Lambda,C,q).
\end{aligned}
\end{equation*}
By the continuity of the weighted co-volume, we have $\overline{V}_{\Theta}(K)\leqslant\alpha(\Lambda,C,q)$, which means $K$ is a $(C,\Theta)$-close set.

Next, we prove the uniqueness. Let $K,L\in\mathcal{K}(C,\omega)$ satisfy
\begin{equation*}
S^\Theta_{n-1}(K,\cdot)=S^\Theta_{n-1}(L,\cdot)=\mu.
\end{equation*}
By the weighted Brunn-Minkowski inequality \eqref{Formula-Brunn-Minkowski-inequality-q-small-than-n-1}, the function
\begin{equation*}
f(t)=\overline{V}_\Theta(K+tL)^\frac{1}{n-q}-\overline{V}_\Theta(K)^\frac{1}{n-q}-t\overline{V}_\Theta(L)^\frac{1}{n-q}
\end{equation*}
is a negative convex function, which implies $f'(0)\leqslant0$. Applying the variational formula \eqref{Variational formula}, we have
\begin{equation*}
\begin{aligned}
f'(0)&=\frac{1}{n-q}\overline{V}_\Theta(K)^{\frac{1}{n-q}-1}\int_\omega\overline{h}_L(u)\,dS_{n-1}^\Theta(K,u)
-\overline{V}_\Theta(L)^\frac{1}{n-q}\\
&=\frac{1}{n-q}\overline{V}_\Theta(K)^{\frac{1}{n-q}-1}\int_\omega\overline{h}_L(u)\,dS_{n-1}^\Theta(L,u)
-\overline{V}_\Theta(L)^\frac{1}{n-q}\\
&=\overline{V}_\Theta(K)^{\frac{1}{n-q}-1}\overline{V}_\Theta(L)-\overline{V}_\Theta(L)^\frac{1}{n-q}.
\end{aligned}
\end{equation*}
Thus, we obtain
\begin{equation*}
\overline{V}_\Theta(K)\leqslant\overline{V}_\Theta(L).
\end{equation*}
By switching the roles of $K$ and $L$, we also find that
$$
\overline{V}_\Theta(L)\leqslant\overline{V}_\Theta(K).
$$
This shows that
\begin{equation*}
\overline{V}_\Theta(K)=\overline{V}_\Theta(L).
\end{equation*}
Consequently, we have
\begin{equation*}
f'(0)=\overline{V}_\Theta(K)^{\frac{1}{n-q}-1}\overline{V}_\Theta(L)-\overline{V}_\Theta(L)^\frac{1}{n-q}=0,
\end{equation*}
which shows that the negative convex function $f\equiv0$. By the equality condition of the inequality \eqref{Formula-Brunn-Minkowski-inequality-q-small-than-n-1} and the $(n-1-q)$-homogeneity of the weighted surface area measure, we conclude that $K=L$.
\end{proof}

\vskip 3mm

\vskip 10mm

\noindent Xudong Wang, \ \ {\small \tt xdwang@snnu.edu.cn}\\
{\em School of Mathematics and Statistics, Shaanxi Normal University\\
Xi'an, 710119, China}

\vskip 2mm \noindent Wenxue Xu, \ \ {\small \tt xuwenxue83@swu.edu.cn}\\
{\em School of Mathematics and Statistics, Southwest University\\
Chongqing, 400715, China}

\vskip 2mm \noindent Jiazu Zhou, \ \ {\small \tt zhoujz@swu.edu.cn}\\
{\em School of Mathematics and Statistics, Southwest University\\
Chongqing, 400715, China}

\vskip 2mm \noindent Baocheng Zhu, \ \ {\small \tt bczhu@snnu.edu.cn}\\
{\em School of Mathematics and Statistics, Shaanxi Normal University\\
Xi'an, 710119, China}

\end{document}